\documentclass[12pt, a4paper]{amsart}
\usepackage[hmargin=3cm, vmargin=2.5cm, includefoot]{geometry}
\usepackage[backref=page,bookmarksopen=true]{hyperref}
\usepackage[latin1]{inputenc}  
\usepackage[T1]{fontenc}       
\usepackage[english]{babel}
\usepackage{amsmath}
\usepackage{amssymb}
\usepackage{amsthm}
\usepackage{latexsym}
\usepackage{mathrsfs}
\usepackage{graphics, graphicx}
\usepackage{enumerate, xspace, paralist}
\usepackage[usenames]{color}
\newtheorem{lem}{Lemma}[section]
\newtheorem{prop}[lem]{Proposition}
\newtheorem{cor}[lem]{Corollary}
\newtheorem{thm}[lem]{Theorem}

\newtheorem{claim}{Claim}

\theoremstyle{definition}

\newcommand{\la}{\langle}
\newcommand{\ra}{\rangle}

\DeclareMathOperator{\centra}{\mathscr{Z}}

\DeclareMathOperator{\Ker}{Ker}
\DeclareMathOperator{\Sym}{Sym}
\DeclareMathOperator{\Aut}{Aut}

\DeclareMathOperator{\Stab}{Stab}
\DeclareMathOperator{\Fix}{Fix}
\DeclareMathOperator{\Ch}{Ch}
\DeclareMathOperator{\proj}{proj}
\DeclareMathOperator{\Res}{Res}

\DeclareMathOperator{\dist}{dist}

\newcommand{\inv}{^{-1}}

\begin{document}

\title[Automorphisms of right-angled buildings]{Automorphism groups of right-angled buildings: \\ Simplicity and local splittings}
\author{Pierre-Emmanuel Caprace}
\address{Universit\'e catholique de Louvain, IRMP, Chemin du Cyclotron 2, 1348 Louvain-la-Neuve, Belgium}
\email{pe.caprace@uclouvain.be}
\thanks{P.-E.C. is an F.R.S.-FNRS Research Associate, supported in part by FNRS grant F.4520.11 and the European Research Council (grant \#278469)}


\date{First draft: October 2012; revised: April 2013}

\begin{abstract}
We show that the group of type-preserving automorphisms of any irreducible semi-regular thick right-angled  building is abstractly simple. When the building is locally finite, this gives a large family of compactly generated (abstractly) simple locally compact groups. Specialising to appropriate cases, we obtain examples of such simple groups that are locally indecomposable, but have locally normal subgroups decomposing non-trivially as direct products, all of whose factors are locally normal.
\end{abstract}

\maketitle


\begin{flushright}
\begin{minipage}[t]{0.7\linewidth}
\small\itshape  {``Everywhere there was evidence of a collective obsession with the comforting logic of right angles.''}
\upshape
\begin{flushright}
(R.~Larsen, The selected works of T.S.~Spivet, 2009)
\end{flushright}
\end{minipage}
\end{flushright}

\section{Introduction}

Let $(W, I)$ be a \textbf{right-angled} Coxeter system, i.e. a Coxeter system such that $m_{i, j} = 2$ or $m_{i, j} = \infty$ for all $i \neq j$. 
We assume that the generating set $I$ is finite.

Haglund--Paulin have shown that for any tuple of (not necessarily finite) cardinalities $(q_i)_{i \in I}$, there exists a right-angled building of type $(W, I)$ with \textbf{prescribed thicknesses} $(q_i)_{i \in I}$, in the sense that for each $i \in I$, all $i$-panels  have thickness of the same cardinality $q_i$. We refer to \cite[Th.~5.1]{Davis} for a group-theoretic construction of that building. Moreover, such a building is unique up to isomorphism (see Proposition~1.2 in \cite{HP}). A right-angled building satisfying that condition on the panels is called \textbf{semi-regular} (this terminology is motivated by the case of trees). It is \textbf{thick} if $q_i >2$ for all $i \in I$. 

The following shows that the automorphism groups of these buildings provide a large family of simple groups. 

\begin{thm}\label{thm:simple}
Let $X$ be a thick semi-regular building of right-angled type $(W, I)$. 
Assume that $(W, I)$ is irreducible non-spherical. 

Then the group $ \Aut(X)^+$ of type-preserving automorphisms of $X$ is abstractly simple, and acts strongly transitively on $X$.
\end{thm}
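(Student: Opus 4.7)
My strategy is the standard two-step approach for establishing simplicity of automorphism groups of buildings: first prove strong transitivity of $G := \Aut(X)^+$ on $X$, then invoke a Tits-type simplicity criterion.

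For strong transitivity, the key input is the Haglund--Paulin uniqueness of the semi-regular right-angled building of type $(W,I)$ with prescribed thicknesses $(q_i)$. Given two pairs $(\Sigma, c)$ and $(\Sigma', c')$ consisting of an apartment and a chamber inside it, I would construct a type-preserving automorphism of $X$ carrying one pair to the other by extending a chosen type-preserving bijection $\Sigma \to \Sigma'$ with $c \mapsto c'$ across the panels, one residue at a time. The semi-regularity provides enough freedom at each step to produce the requisite extension, and the uniqueness statement ensures that it assembles into a single automorphism of $X$. This yields the second assertion of the theorem.

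Once strong transitivity is in hand, $G$ carries a saturated BN-pair whose Weyl group is $(W,I)$, and one can apply a Tits-type simplicity criterion --- for instance, the refinement due to Bourdon--H\'ee--R\'emy or Caprace--R\'emy for faithful strongly transitive actions on thick buildings of irreducible non-spherical type. This criterion yields simplicity once $G$ is shown to be generated by its ``root subgroups'', whose natural analogues in the right-angled setting are the pointwise fixators of ``wings'' (sets of chambers lying on one side of a wall), and to be perfect, together with the condition that $\bigcap_{g \in G} gBg\inv = 1$. Faithfulness of $G$ on $X$ is automatic, and the Weyl-theoretic hypotheses are as given.

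The main obstacle is the generation/perfectness statement. I would exploit the thickness $q_i > 2$ to produce sufficiently many nontrivial wing fixators, and the free commutation relations encoded by $m_{ij} = 2$ in the right-angled Coxeter system to show, via strong transitivity and residue-by-residue analysis, that arbitrary elements of $G$ are expressible as products of wing fixators and that $G$ coincides with its derived subgroup. Confirming that the BN-pair kernel is trivial ultimately exploits non-sphericity (and hence unboundedness of the Coxeter complex) to separate points by walls. Once these ingredients are in place, the simplicity criterion applies and yields the desired conclusion.
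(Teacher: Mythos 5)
Your first half (strong transitivity) is in the spirit of the paper: the actual proof also rests on Haglund--Paulin's extension theorem (Proposition~\ref{prop:extension}), implemented as a ball-by-ball induction using fixators of wings, so your sketch, though compressed, points in the right direction. The problem is the second half.

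The simplicity criterion you invoke does not apply to $G=\Aut(X)^+$. The classical Tits--Bourbaki criterion for a group with a BN-pair requires a subgroup $U\trianglelefteq B$ with $G=\la U^G\ra$ and, crucially, $U$ \emph{solvable} (the argument runs: for $N\trianglelefteq G$ nontrivial one gets $G=NU$, so $G/N$ is a quotient of $U$, and perfect-plus-solvable forces $G=N$); Hée's refinement replaces solvability by the requirement that $U$ have no nontrivial perfect quotient. Here every natural candidate for $U$ (the chamber fixator, or the closure of the group generated by the wing fixators $U_i(c)$, cf.\ Proposition~\ref{prop:Fix}) is wildly non-solvable, and ruling out perfect quotients of it is essentially the original problem in disguise. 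The Bourdon/H\'ee--R\'emy/Caprace--R\'emy refinements you mention are formulated for groups equipped with a (twin) root group datum, i.e.\ Moufang-type structures such as Kac--Moody groups; the full type-preserving automorphism group of a general semi-regular right-angled building carries no such datum, and ``wing fixators'' do not satisfy the commutation/conjugation axioms of root subgroups. So even after you prove generation by wing fixators, perfectness, and triviality of $\bigcap_g gBg\inv$ (which is indeed automatic here), no criterion of the type you cite yields simplicity.

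What is actually needed, and what the paper does, is a dynamical argument in the style of Tits' tree paper and Haglund--Paulin: (1) Tits' transitivity lemma (via strong transitivity) makes every nontrivial normal subgroup $N$ chamber-transitive (Corollary~\ref{cor:TitsTrans}); (2) the independence property --- the fixator of a wall-residue decomposes as the direct product $\prod_d V_i(d)$ of wing fixators (Proposition~\ref{prop:ProductDec}) --- combined with a commutator lemma (Lemma~\ref{lem:TitsCommutator}), in which an element $g\in N$ displacing a panel across a wall with $m_{i,j}=\infty$ is used to write any element of the product of wing fixators as a commutator $[x,g]$ with $x$ an infinite product of $g$-conjugates, shows $N\supseteq\Fix_G(\overline\sigma)$ for every wall-residue; (3) an induction on the rank of $(W,I)$ (base case: Tits' theorem for trees), using the extension theorem again, boosts this to $N\supseteq\Stab_G(R)$ for some residue $R$, after which Tits' 1964 result on subgroups containing $B$ in a BN-pair finishes the proof. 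Your proposal contains neither the contraction/commutator mechanism nor the rank induction, and these are the genuinely load-bearing steps.
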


Recall that \textbf{strong transitivity} means transitivity on pairs $(c, A)$ consisting of a chamber $c$ and an apartment $A$ containing $c$ (we implicitly refer to the complete apartment system). Haglund and Paulin \cite[Prop.~1.2]{HP} have shown that $\Aut(X)^+$ is chamber-transitive; in fact, the main tools in the proof of Theorem~\ref{thm:simple} rely on their work in an essential way.

\medskip
If $W$ is infinite dihedral, then a building $X$ of type $(W, I)$ with prescribed thicknesses $(q_i)_{i \in I}$ is nothing but a semi-regular tree. In that case the simplicity of the type-preserving automorphism group $G = \Aut(X)^+$ is due to Tits \cite{Tits:arbres}. If  $(W, I)$ is a \textbf{right-angled Fuchsian group} (i.e. if $I = \{1, \dots, r\}$ and $m_{i, j} = 2$ if and only if $|i - j| = 1$ or $r-1$), then a building $X$ of type $(W, I)$ is a \textbf{Bourdon building}, and the simplicity statement is due to Haglund--Paulin \cite{HP:simple}.

\medskip
After this work was completed, K.~Tent informed me that she had obtained independently a proof of \emph{bounded} simplicity  in the case right-angled buildings whose panels are of countable thickness; this stronger simplicity statement means that there is a uniform constant $N$ such that the group can be written as a product of $N$ copies of each of its non-trivial conjugacy classes. In the case of trees, bounded simplicity was proved without any restriction on the thickness by J.~Gismatullin~\cite{Gis}. Another related simplicity theorem was also obtained by N.~Lazarovich~\cite{Laz}; it applies to a large family of groups acting on locally finite, finite-dimensional CAT($0$) cube complexes. It is likely that the special case of Theorem~\ref{thm:simple} concerning locally finite right-angled buildings could also be deduced from \cite{Laz}, using the fact that right-angled buildings can be cubulated. 

\medskip
Notice that a building whose type-preserving automorphism group is chamber-transitive, is necessarily semi-regular. The following is thus immediate from Theorem~\ref{thm:simple}. 

\begin{cor}
Let $X$ be an irreducible thick  right-angled  building of non-spherical type.
If  $ \Aut(X)^+$ is chamber-transitive, then it is strongly transitive and  abstractly simple. 
\end{cor}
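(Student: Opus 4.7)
The plan is to derive the corollary directly from Theorem~\ref{thm:simple}, with the only nontrivial point being the justification of the parenthetical remark preceding the statement: namely, that chamber-transitivity of $\Aut(X)^+$ forces the building to be semi-regular. Once that is granted, Theorem~\ref{thm:simple} applies verbatim under the hypotheses of the corollary (irreducible, non-spherical, thick, right-angled).

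First I would verify the semi-regularity claim. Since $W$ is right-angled and $I$ is the set of Coxeter generators, every chamber of $X$ is contained in exactly one $i$-panel for each $i \in I$. Type-preserving automorphisms permute the $i$-panels (for each fixed $i$), so chamber-transitivity of $\Aut(X)^+$ implies transitivity on the set of $i$-panels for every $i \in I$. In particular, for each $i$ all $i$-panels have the same cardinality $q_i$, which is precisely the definition of semi-regularity.

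With semi-regularity in hand, the building $X$ is an irreducible thick semi-regular right-angled building of non-spherical type, so Theorem~\ref{thm:simple} applies and yields that $\Aut(X)^+$ is strongly transitive and abstractly simple.

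There is no real obstacle: the entire content has been packaged into Theorem~\ref{thm:simple}, and the corollary merely trades the semi-regularity hypothesis for the a priori weaker (but actually equivalent, under the other hypotheses) assumption of chamber-transitivity of the full type-preserving automorphism group.
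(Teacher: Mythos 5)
Your proposal is correct and matches the paper's own route: the paper justifies the corollary precisely by the observation that chamber-transitivity of the type-preserving automorphism group forces semi-regularity (type-preserving automorphisms carry $i$-panels to $i$-panels, so chamber-transitivity gives transitivity on $i$-panels for each $i$), after which Theorem~\ref{thm:simple} applies directly. Your spelled-out argument for that observation is the intended one, so there is nothing to add.
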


In the special case when $X$ is \textbf{locally finite}, i.e. when $q_i < \infty$ for all $i \in I$, the group $G$ endowed with the compact-open topology is a second countable totally disconnected locally compact group. It is compactly generated since it acts chamber-transitively on $X$. In particular Theorem~\ref{thm:simple} provides a large family of compactly generated simple locally compact groups. Our next goal is to describe their rich local structure. 

A general study of the local structure of compactly generated, topologically simple, totally disconnected locally compact groups is initiated in \cite{CRW-short} and \cite{CRW}. The main objects of consideration in that study are the \textbf{locally normal subgroups}, namely the compact subgroups whose normaliser is open. The trivial subgroup, as well as the compact open subgroups, are obviously locally normal, considered as trivial. It is important to observe that a compactly generated,  locally compact group can be topologically simple and nevertheless possess non-trivial locally normal subgroups. Basic examples of such groups are provided by the type-preserving automorphism group of semi-regular locally finite tree. It turns out that the group of type-preserving automorphisms of an arbitrary semi-regular locally finite tree always admits non-trivial locally normal subgroups; some of them even split non-trivially as direct products  (see Lemma~\ref{lem:LocNorm} below). The case of trees has however a special additional property: some compact \emph{open} subgroups split as a direct product of infinite closed subgroups; the corresponding factors are \emph{a fortiori} locally normal and non-trivial. It is thus natural to ask for which right-angled buildings that situation occurs, beyond the case of trees. The following provides a complete answer to this question, implying in particular that open subgroups admit  non-trivial product decompositions only under very special circumstances. 

\begin{thm}\label{thm:product}
Let $X$ be a building of right-angled type $(W, I)$ and prescribed thicknesses $(q_i)_{i \in I}$, with $2 < q_i < \infty $ for all $i \in I$. 
Assume that $(W, I)$ is irreducible non-spherical. 

Then the following assertions are equivalent:
\begin{enumerate}[(i)]
\item All open subgroups of $G = \Aut(X)^+$ are indecomposable.  

\item $G$ is one-ended. 

\item $W$ is one-ended. 
\end{enumerate}

\end{thm}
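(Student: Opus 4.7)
My plan is to route all three equivalences through a combinatorial condition on the nerve $L$ of $(W, I)$. A theorem of Davis and Mihalik--Tschantz characterizes one-endedness of a right-angled Coxeter group: $W$ is one-ended if and only if $L$ has no \emph{separating spherical simplex}, i.e.\ no spherical subset $J \subsetneq I$ admitting a partition $I \setminus J^\perp = I_1 \sqcup I_2$ into nonempty pieces with $m_{ij} = \infty$ for every $i \in I_1$ and $j \in I_2$. Since $X$ is locally finite and $G$ acts properly and cocompactly on $X$, the Svarc--Milnor lemma identifies the ends of $G$ with those of $X$, so (ii) is equivalent to $X$ being one-ended.

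For $(iii) \Leftrightarrow (ii)$: if $W$ is one-ended, every apartment of $X$ is isomorphic to the Davis realization of $(W, I)$ and hence one-ended; since any two chambers of $X$ lie in a common apartment, one can join two chambers outside a bounded subset $B$ by a gallery contained in such an apartment, avoiding its bounded intersection with $B$. Conversely, given a separating spherical $J$, fix a $J$-residue $R \subset X$; since $J$ is spherical, $R$ is finite, and the assumption $m_{ij} = \infty$ for $i \in I_1$, $j \in I_2$ implies no gallery can cross between the $I_1$-side of $R$ and the $I_2$-side without re-entering $R$. Hence $\Ch(X) \setminus R$ has at least two components, each unbounded because each $q_i > 2$ and each $I_k$ is nonempty, so $X$ has $\geq 2$ ends.

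For $\neg(iii) \Rightarrow \neg(i)$: with the same $J$-residue $R$, the pointwise fixer $U := \Fix_G(R)$ is compact open, and defining $U_k \le U$ to consist of those elements that further fix pointwise every chamber on the $I_j$-side (for $j \neq k$), I would verify that $U = U_1 \times U_2$ with both factors nontrivial. Each factor is nontrivial because each $I_k$-side is thick ($q_i > 2$), commutativity holds because the $U_k$ act on disjoint chamber sets, and the surjectivity $U = U_1 U_2$ reduces to the claim that any $g \in \Fix_G(R)$ can be written as a product of automorphisms each supported on one side, which in turn follows from the universality description of right-angled buildings (see \cite[Prop.~1.2]{HP}): automorphisms of the two sides glue along $R$ without obstruction.

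The main obstacle is the reverse implication $(i) \Rightarrow (iii)$: given a decomposable open subgroup $U = U_1 \times U_2$ with both factors nontrivial, one must produce a separating spherical simplex in $L$. Unlike the tree case, in a higher-dimensional right-angled building the supports of commuting compact subgroups are not automatically geometrically separated. The strategy would be to examine the fixed-chamber sets of the $U_k$ --- each is invariant under the other factor by commutativity --- and to show, using right-angled combinatorics (projection onto residues, convexity of residues), that the resulting pair of canonical subcomplexes of $X$ must asymptotically split the building, forcing a spherical residue to act as separator. This step, transferring an algebraic decomposition of a compact open subgroup into a geometric separator of $W$, is the technical heart of the proof, and I would expect it to draw on the machinery of locally normal subgroups developed in \cite{CRW-short, CRW}.
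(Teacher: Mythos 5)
Your outline of the easy implications broadly matches the paper's route (the Mihalik--Tschantz criterion for (iii), quasi-isometry of $G$ and $X$, and a product decomposition of $\Fix_G(R)$ for a spherical separator $R$), but the proposal has a genuine gap exactly where you flag it: the direction ``$W$ (equivalently $X$) one-ended $\Rightarrow$ all (compact) open subgroups indecomposable'' is not proved, only described as something you ``would expect'' to follow from the fixed-point sets of the factors and the machinery of \cite{CRW-short,CRW}. That expectation does not match how the argument actually has to go, and no step of it is supplied. The paper's proof does not extract a separating simplex from a decomposition; instead, given a compact open $U=AB$ with commuting factors, it fixes a chamber $x$, takes $n_0$ with $G(n_0)=\Fix_G(B(x,n_0))\leq U$, considers the set $\Pi$ of panels on which $G(n_0)$ acts non-trivially, and shows (via the commutator trick of Lemma~\ref{lem:abelian} combined with the non-abelianness of wing fixators, Lemma~\ref{lem:NonAb}) that for each $\sigma\in\Pi$ exactly one factor acts non-trivially, yielding a function $f\colon\Pi\to\{A,B\}$. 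The heart is then proving $f$ is locally constant using right-angled wall combinatorics (Lemma~\ref{lem:DeepCorner} on half-spaces, a Ramsey argument producing a panel in $I_0(c)\cap I_\Pi(c)$ far from $x$), and using one-endedness of $X$ to make $f$ globally constant outside a ball, so that one factor meets a deep pointwise fixator trivially, hence is finite; triviality of the quasi-centre (\cite{BEW}, which needs compact generation and the simplicity theorem) then finishes. None of these ideas appear in your sketch, and without them the implication is unproved.

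Two secondary points. First, your argument that $W$ one-ended implies $X$ one-ended is too quick: given chambers $c',c''$ far from $x$, a common apartment $A$ need not contain $x$, and even when it does, one cannot simply ``avoid the bounded intersection $B\cap A$ inside $A$'' -- the chambers may be separated from the unbounded part of $A\setminus B$ within $A$. The paper needs an induction on the radius together with Lemma~\ref{lem:U} to move an apartment so that it contains $x$, $d$, $c'$ and $c''$ simultaneously before invoking one-endedness of apartments. Second, in the direction $\neg$(iii)$\Rightarrow\neg$(i), the surjectivity $\Fix_G(R)=U_1U_2$ is not a formality: the paper derives it from Proposition~\ref{prop:Fix} (generation of fixators of spherical residues by wing fixators, which uses local finiteness and a non-trivial filtration argument), and, crucially, directness of the product ($U_1\cap U_2=1$) is obtained from triviality of the quasi-centre via \cite{BEW} and simplicity -- your ``glue along $R$ without obstruction'' addresses neither point.
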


By \textbf{indecomposable}, we mean the non-existence of a non-trivial direct product decomposition. The  \textbf{set of ends} of a compactly generated locally compact group is defined with respect to compact generating sets in the same way as for discrete groups (see \cite{Abels}). Notice that  Theorem~\ref{thm:product}  establishes a relation between the \emph{local}  structure of $G$ (because the existence of an open subgroup splitting non-trivially as a product can be detected in arbitrarily small identity neighbourhoods) and its \emph{asymptotic} properties. 

The condition that $W$ is one-ended can easily be read on the Coxeter diagram (see Theorem~\ref{thm:product:bis} for a precise formulation). H.~Abels \cite{Abels} has shown that a natural analogue of Stallings' theorem holds for non-discrete locally compact groups. This ensures that $G = \Aut(X)^+$ is one-ended if and only if it does not split non-trivially as an amalgamated free product over a compact open subgroup.

It follows from Theorem~\ref{thm:product} that, if $X$ is a Bourdon building, then compact open subgroups of $\Aut(X)^+$ are indecomposable, but they have locally normal subgroups that split non-trivially as products, all of whose factors are themselves locally normal. With Theorem~\ref{thm:product} at hand, one can construct buildings $X$ of arbitrarily large dimension whose automorphism group has that property. 

\subsection*{Acknowledgement}
I thank Colin Reid and George Willis for numerous inspiring conversations; the main motivation for the present work was in fact provided by the common enterprise initiated in \cite{CRW}. I am grateful to the anonymous referee for a very thorough reading of the paper; his/her numerous comments and suggestions helped much in  correcting  its inaccuracies and improving its readability.  

\section{Projections and parallel residues}

Throughout the paper, we will mostly view a building $X$ of type $(W, I)$ as a $W$-metric space; we refer to \cite{AB} for the basic concepts. Occasionally, geometric arguments will require to consider the Davis realisation of $X$, as defined in \cite{Davis} or \cite[Ch.~12]{AB}. This point of view will be implicit when discussing configuration of walls in a given apartment.  In order to avoid any confusion between these two viewpoints, we will avoid to identify a residue $R$ with the chambers adjacent to it; instead the latter set of chambers is denoted by $\Ch(R)$. 

A fundamental feature of buildings is the existence of combinatorial projections between residues. We briefly recall their basic properties, which will be frequently used in the sequel. All the properties which we do not prove in detail are established in \cite[\S 3.19]{TitsLN}. 

Let $X$ be a building of type $(W, I)$. Given a chamber $c \in \Ch(X)$ and a residue $\sigma$ in $X$, the \textbf{projection} of $c$ on $\sigma$ is the unique chamber of $\Ch(\sigma)$ that is closest to $c$. 
It is denoted by $\proj_\sigma(c)$. For any chamber $d \in \Ch(\sigma)$, there is a minimal gallery from $c$ to $d$ passing through $\proj_\sigma(c)$, and such that the subgallery from $\proj_\sigma(c)$ to $d$ is contained in $\Ch(\sigma)$. Moreover, any apartment containing $c$ and meeting $\Ch(\sigma)$ also contains $\proj_\sigma(c)$. An important property of $\proj$ is that it does not increase the numerical distance between chambers: for all $c, c' \in \Ch(X)$, the numerical distance from $\proj_\sigma(c)$ to $\proj_\sigma(c')$ is bounded above by the numerical distance from $c$ to $c'$. 

If $\sigma$ and $\tau$ are two residues, then the set 
$$\{\proj_\sigma(c) \; | \; c \in \Ch(\tau)\}$$
is the chamber-set of a residue contained in $\sigma$. That residue is denoted by $\proj_\sigma(\tau)$. The rank of $\proj_{\sigma}(\tau)$ is bounded above by the ranks of both  $\sigma$ and $\tau$. 

We shall often use the following crucial property of the projection map; we outsource its statement for the ease of reference. 

\begin{lem}\label{lem:NestedProj}
Let $R, S$ be two residues such that $\Ch(R) \subseteq \Ch(S)$. Then for any residue $\sigma$, we have $\proj_R(\sigma) = \proj_R(\proj_S(\sigma))$. 
\end{lem}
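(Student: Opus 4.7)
The plan is to reduce the residue-level statement to the chamber-level identity $\proj_R(c) = \proj_R(\proj_S(c))$, valid for any chamber $c \in \Ch(X)$ as soon as $\Ch(R) \subseteq \Ch(S)$, and then to unfold the definition of the projection of a residue.

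For the chamber-level identity, I would invoke the fundamental gate property recalled just before the lemma: there exists a minimal gallery from $c$ to any chamber $d \in \Ch(S)$ that passes through $\proj_S(c)$ and whose terminal segment lies in $\Ch(S)$. Consequently the numerical distance splits as
\[
\dist(c,d) \;=\; \dist\bigl(c,\proj_S(c)\bigr) \,+\, \dist\bigl(\proj_S(c),d\bigr)
\quad \text{for every } d\in\Ch(S).
\]
Since $\Ch(R)\subseteq\Ch(S)$, the first summand is independent of $d\in\Ch(R)$, so minimising $\dist(c,d)$ over $d\in\Ch(R)$ is equivalent to minimising $\dist(\proj_S(c),d)$ over $d\in\Ch(R)$. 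By uniqueness of the closest chamber, this forces $\proj_R(c) = \proj_R(\proj_S(c))$.

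To conclude, I would just match chamber sets. By definition,
\[
\Ch\bigl(\proj_R(\sigma)\bigr) \;=\; \bigl\{\proj_R(c)\, \bigl|\,c\in\Ch(\sigma)\bigr\},
\]
while
\[
\Ch\bigl(\proj_R(\proj_S(\sigma))\bigr) \;=\; \bigl\{\proj_R(d)\, \bigl|\,d\in\Ch(\proj_S(\sigma))\bigr\} \;=\; \bigl\{\proj_R(\proj_S(c))\, \bigl|\,c\in\Ch(\sigma)\bigr\}.
\]
The chamber-level identity established above shows these two sets coincide, hence the two residues are equal.

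There is no real obstacle here; the only point requiring care is the clean invocation of the gate property to force the additive splitting of the gallery length, which is precisely what makes projections compose well along nested residues. Everything else is bookkeeping at the level of chamber sets.
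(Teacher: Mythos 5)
Your proof is correct. The paper gives no internal argument for this lemma---it simply cites \cite[3.19.5]{TitsLN}---so your gate-property derivation is a self-contained substitute for that citation, and it is essentially the standard proof of the cited fact. The two steps hold up: the gate property recalled just before the lemma (a minimal gallery from $c$ to any $d \in \Ch(S)$ passing through $\proj_S(c)$, with the terminal segment inside $\Ch(S)$) does give the additive splitting $\dist(c,d) = \dist(c,\proj_S(c)) + \dist(\proj_S(c),d)$ for every $d \in \Ch(S)$, hence in particular for every $d \in \Ch(R)$; since the first summand does not depend on $d$, the unique minimiser of $\dist(c,\cdot)$ on $\Ch(R)$ coincides with the unique minimiser of $\dist(\proj_S(c),\cdot)$ on $\Ch(R)$, which is exactly $\proj_R(c) = \proj_R(\proj_S(c))$. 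The passage to residues is then pure bookkeeping, as you say: by the paper's definition, $\proj_R(\sigma)$ and $\proj_R(\proj_S(\sigma))$ are the residues whose chamber sets are the images of $\Ch(\sigma)$ under $\proj_R$ and under $\proj_R \circ \proj_S$ respectively, and the chamber-level identity shows these images agree, so the two residues coincide. What your route buys is independence from the external reference; what the citation buys is brevity and the full generality of Tits's treatment, but nothing in your argument is specific to the right-angled setting either, so the trade-off is only one of exposition.
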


\begin{proof}
See \cite[3.19.5]{TitsLN}.
\end{proof}

Two residues $\sigma$ and $\tau$ are called \textbf{parallel} if $\proj_\sigma(\tau) = \sigma$ and $\proj_\tau(\sigma) = \tau$. In that case, the chamber sets of $\sigma$ and $\tau$ are mutually in bijection under the respective projection maps.  Since the projection map between residues does not increase the rank, it follows that two parallel residues have the same rank. A basic example of parallel residues is provided by   two opposite residues in a spherical building. Another one is provided by the following. 

\begin{lem}\label{lem:ProductRes}
Let $J_1, J_2 \subset I$ be two disjoint subsets with $[J_1, J_2] = 1$. Let $c \in \Ch(X)$. Then 
$$
\Ch(\Res_{J_1 \cup J_2}(c)) = \Ch(\Res_{J_1}(c)) \times \Ch(\Res_{J_2}(c)),
$$ 
and for $i \in \{1, 2\}$, the canonical projection map 
$\Ch(\Res_{J_1 \cup J_2}(c)) \to  \Ch(\Res_{J_i}(c))$ coincides  with the restriction of $\proj_{\Res_{J_i}(c)}$  to $\Ch(\Res_{J_1 \cup J_2}(c))$. In particular, any two $J_i$-residues contained in $\Res_{J_1 \cup J_2}(c)$ are parallel. 
\end{lem}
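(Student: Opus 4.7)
The plan hinges on the fact that $[J_1, J_2] = 1$ makes $W_{J_1 \cup J_2}$ the internal direct product $W_{J_1} \times W_{J_2}$: every $w \in W_{J_1 \cup J_2}$ admits a unique factorisation $w = w_1 w_2 = w_2 w_1$ with $w_i \in W_{J_i}$ and $\ell(w) = \ell(w_1) + \ell(w_2)$. I would define $\Phi \colon \Ch(\Res_{J_1 \cup J_2}(c)) \to \Ch(\Res_{J_1}(c)) \times \Ch(\Res_{J_2}(c))$ by $d \mapsto (\proj_{\Res_{J_1}(c)}(d), \proj_{\Res_{J_2}(c)}(d))$ and show it is a bijection. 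Once this is done, the asserted equality is realised by $\Phi$, and the claim that each canonical factor projection coincides with $\proj_{\Res_{J_i}(c)}$ becomes tautological.

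To unpack the meaning of $\Phi$, fix $d$ with $\delta(c, d) = w_1 w_2$. A minimal gallery $c \to d$ has type a reduced expression for $w_1 w_2$; iterating elementary commutations of $J_1$ with $J_2$ letters yields a minimal gallery whose first $\ell(w_1)$ steps use only $J_1$ generators, reaching some $d_1^\ast \in \Ch(\Res_{J_1}(c))$ with $\delta(c, d_1^\ast) = w_1$ and $\delta(d_1^\ast, d) = w_2$. To identify $d_1^\ast = \proj_{\Res_{J_1}(c)}(d)$, take any $c' \in \Ch(\Res_{J_1}(c))$, set $v := \delta(c', d_1^\ast) \in W_{J_1}$, and concatenate a minimal gallery $c' \to d_1^\ast$ with the gallery $d_1^\ast \to d$ of type $\tilde{w_2}$. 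Since $v \in W_{J_1}$ and $w_2 \in W_{J_2}$ commute with $\ell(v w_2) = \ell(v) + \ell(w_2)$, the concatenated type is a reduced expression, so the gallery is minimal and $d(c', d) = \ell(v) + \ell(w_2)$, strictly minimised at $c' = d_1^\ast$.

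For surjectivity, given $(d_1, d_2)$ I would use that $\delta(d_1, d_2) = w_1^{-1} w_2$ factors as $w_2 \cdot w_1^{-1}$ with additive lengths, so the standard ``intermediate chamber'' fact in buildings yields a chamber $d$ on a minimal gallery from $d_1$ to $d_2$ with $\delta(d_1, d) = w_2$ and $\delta(d, d_2) = w_1^{-1}$. Then $d \in \Res_{J_2}(d_1) \cap \Res_{J_1}(d_2)$, and applying the preceding paragraph to the minimal gallery $c \to d_1 \to d$ of type $\tilde{w_1}\tilde{w_2}$ yields $\proj_{\Res_{J_1}(c)}(d) = d_1$, while its rearrangement (with $J_2$ letters first) yields $\proj_{\Res_{J_2}(c)}(d) = d_2$, whence $\Phi(d) = (d_1, d_2)$. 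For injectivity, if $\Phi(d) = \Phi(d')$ then both chambers lie in $\Res_{J_2}(d_1) \cap \Res_{J_1}(d_2)$, so $\delta(d, d') \in W_{J_1} \cap W_{J_2} = \{1\}$ (using $J_1 \cap J_2 = \emptyset$), forcing $d = d'$.

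The parallelism assertion then falls out at once: under $\Phi$, any $J_i$-residue contained in $\Res_{J_1 \cup J_2}(c)$ corresponds to a slice $\Ch(\Res_{J_i}(c)) \times \{e\}$, and two such slices are put in bijection by $\proj$, which acts as the identity on the first coordinate. The main technical care lies in the second paragraph, where the commutation $[J_1, J_2] = 1$ must be invoked precisely to guarantee that $\tilde{v}\tilde{w_2}$ is a reduced expression; the remainder reduces to routine bookkeeping with $W$-distances via the gate property.
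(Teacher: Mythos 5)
Your argument is correct. Note that the paper does not actually prove this lemma: it simply cites \cite[Th.~3.10]{Ronan}, so there is no in-paper argument to compare against; what you have written is essentially the standard $W$-metric proof of that cited result. The key steps are all sound: length additivity in $W_{J_1}\times W_{J_2}$, existence of minimal galleries of any prescribed reduced type, the fact that a non-stuttering gallery of reduced type is minimal (which is what makes your concatenation $c'\to d_1^\ast\to d$ compute $\dist(c',d)=\ell(v)+\ell(w_2)$ and hence identifies $d_1^\ast$ as the gate), and $W_{J_1}\cap W_{J_2}=W_{J_1\cap J_2}=\{1\}$ for injectivity. The only place where I would ask for one more sentence is the final parallelism claim: you assert that a $J_1$-residue inside $\Res_{J_1\cup J_2}(c)$ is a slice $\Ch(\Res_{J_1}(c))\times\{d_2\}$ and that $\proj$ between two slices is the identity on the first coordinate. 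The clean way to close this is to observe that the statement you have just proven can be re-based at any chamber $d_2'$ of the target slice: since $\Res_{J_1\cup J_2}(c)=\Res_{J_1\cup J_2}(d_2')$, the projection onto $R'=\Res_{J_1}(d_2')$ of any chamber of the big residue is its first coordinate in the decomposition based at $d_2'$, and a $J_1$-residue $R$ is a full slice in that decomposition as well, so $\proj_{R'}|_{\Ch(R)}$ is onto $\Ch(R')$; by symmetry $\proj_{R}(R')=R$, giving parallelism. With that small addition your proof is complete and self-contained, which is arguably more informative than the paper's bare citation.
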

\begin{proof}
See \cite[Th.~3.10]{Ronan}.
\end{proof}

Given a chamber $c$ and a residue $R$  in $X$, we set $\dist(c, R) = \dist(c, \proj_R(c))$. Given another residue $R'$, we set 
$$\dist(R, R') = \min_{c \in \Ch(R)} \dist(c, R') = \min_{c' \in \Ch(R')} \dist(c', R).$$

\begin{lem}\label{lem:ConstantDistance}
Let $\sigma$ and $\tau$ be parallel residues. For all $x \in \Ch(\sigma)$ and $y \in \Ch(\tau)$, we have $\dist(x, \tau) = \dist(y, \sigma) = \dist(\sigma, \tau)$. 
\end{lem}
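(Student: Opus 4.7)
The plan is to show that the function $x \mapsto \dist(x, \proj_\tau(x))$ is constant on $\Ch(\sigma)$; the two claimed equalities will then follow directly from the definition of $\dist(\sigma, \tau)$ and from the symmetry between $\sigma$ and $\tau$.

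Pick $x, x' \in \Ch(\sigma)$, and set $y = \proj_\tau(x)$, $y' = \proj_\tau(x')$. Since $\sigma$ and $\tau$ are parallel, the projection maps $\proj_\sigma$ and $\proj_\tau$ are mutually inverse bijections between $\Ch(\sigma)$ and $\Ch(\tau)$, so $\proj_\sigma(y) = x$ and $\proj_\sigma(y') = x'$. Now I would apply twice the basic fact recalled earlier from \cite[\S3.19]{TitsLN}: for any chamber $c$ and residue $R$, and any $d \in \Ch(R)$, there is a minimal gallery from $c$ to $d$ passing through $\proj_R(c)$. Applied to the chamber $x$ and the residue $\tau$ with $d = y'$, and then to $y'$ and the residue $\sigma$ with $d = x$, this yields
\begin{align*}
\dist(x, y') &= \dist(x, y) + \dist(y, y'), \\
\dist(x, y') &= \dist(x, x') + \dist(x', y').
\end{align*}
The symmetric argument, computing $\dist(x', y)$ by routing the gallery first through $\proj_\tau(x') = y'$ and then through $\proj_\sigma(y) = x$, gives
\begin{align*}
\dist(x', y) &= \dist(x', y') + \dist(y', y), \\
\dist(x', y) &= \dist(x', x) + \dist(x, y).
\end{align*}
Subtracting the second pair from the first pair yields $\dist(x, y) - \dist(x', y') = \dist(y, y') - \dist(x, x')$ and simultaneously $\dist(x, y) - \dist(x', y') = \dist(x, x') - \dist(y, y')$. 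Hence both differences are zero: $\dist(x, y) = \dist(x', y')$, and as a byproduct $\dist(x, x') = \dist(y, y')$ (which is the fact, already noted right after the definition of parallelism, that the projection bijection preserves numerical distances).

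The common value $d := \dist(x, \proj_\tau(x))$ is therefore independent of $x \in \Ch(\sigma)$, so by definition $\dist(\sigma, \tau) = \min_{x \in \Ch(\sigma)} \dist(x, \tau) = d = \dist(x, \tau)$ for every $x \in \Ch(\sigma)$. Swapping the roles of $\sigma$ and $\tau$ gives the same statement with $\sigma$ and $\tau$ interchanged, and since $\dist(\sigma, \tau) = \dist(\tau, \sigma)$, this establishes the full equality $\dist(x, \tau) = \dist(y, \sigma) = \dist(\sigma, \tau)$ for all $x \in \Ch(\sigma)$ and $y \in \Ch(\tau)$.

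The only delicate point is the initial identification $\proj_\sigma(y) = x$ and $\proj_\sigma(y') = x'$, which rests on the fact that between parallel residues the two projection maps are mutual inverses; once this is in hand the argument is purely combinatorial via insertion of projections into minimal galleries.
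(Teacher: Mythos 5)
Your proof is correct, but it follows a genuinely different route from the paper's. The paper argues inside an apartment $\Sigma$ containing $x$ and $y$: by convexity $\Sigma$ also contains $x' = \proj_\tau(x)$ and $y' = \proj_\sigma(y)$; parallelism forces $\sigma \cap \Sigma$ and $\tau \cap \Sigma$ to lie on the same walls, hence to have the same stabiliser in $W$, and the Weyl group element sending $x$ to $y'$ then preserves both residues and carries $x'$ to $y$, giving $\dist(x, x') = \dist(y', y)$ at once. You instead work purely combinatorially with the gate property: inserting $\proj_\tau$ and $\proj_\sigma$ into minimal galleries among the four chambers $x, x', y, y'$ gives two computations each of $\dist(x, y')$ and $\dist(x', y)$, and the resulting relations force $\dist(x, \proj_\tau(x))$ to be constant on $\Ch(\sigma)$ (with $\dist(x, x') = \dist(y, y')$ as a by-product), after which the definition of $\dist(\sigma,\tau)$ and symmetry finish the argument. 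What your approach buys is elementarity and independence from the apartment/wall picture: it uses only the gate property and the fact that for parallel residues the two projections restrict to mutually inverse bijections between $\Ch(\sigma)$ and $\Ch(\tau)$ — this last point, which you rightly flag as the delicate one, is standard and should be backed by an explicit reference such as \cite[3.19.5--3.19.6]{TitsLN}, since the paper's phrase ``mutually in bijection'' is what encodes it. The paper's proof is shorter once the wall formalism is in place and matches the geometric viewpoint used repeatedly later in the article, but both arguments are sound.
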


\begin{proof}
Let $\Sigma$ be an apartment containing $x$ and $y$. By convexity, it also contains $x' = \proj_\tau(x)$  and $y' = \proj_\sigma(y)$. Since $\sigma$ and $\tau$ are parallel, every wall of $\Sigma$ crossing $\sigma \cap \Sigma$ also crosses $\tau \cap \Sigma$ and vice-versa. It follows that the respective stabilisers of $\sigma \cap \Sigma$ and $\tau \cap \Sigma$  in the Weyl group $W$ coincide. In particular the unique element $w \in W$ mapping $x$ to $y'$ preserves both $\sigma$ and $\tau$. Since  $\sigma$ and $\tau$ are parallel, we have $\proj_{\tau}(y') = y$ so that $w$ maps $x'$ to $y$. Hence $\dist(x, \tau) =\dist(x, x') = \dist(y', y) = \dist(\sigma, y)$. The result follows.
\end{proof}

The relation of parallelism plays a special role among panels. The following criterion will be used frequently. 

\begin{lem}\label{lem:ParallPanels}
Let $\sigma$ and $\sigma'$ be panels.

If two chambers of $\sigma'$ have distinct projections on $\sigma$, then $\sigma$ and $\sigma'$ are parallel. 
\end{lem}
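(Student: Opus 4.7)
The plan is to verify the two defining conditions $\proj_\sigma(\sigma') = \sigma$ and $\proj_{\sigma'}(\sigma) = \sigma'$ of parallelism, exploiting the fact that a subresidue of a rank-$1$ residue containing at least two chambers must equal the whole residue. Name the two given chambers $c_1, c_2 \in \Ch(\sigma')$ and set $d_k := \proj_\sigma(c_k)$, so that $d_1 \neq d_2$ by hypothesis. Then $\proj_\sigma(\sigma')$ is a subresidue of $\sigma$ containing the distinct chambers $d_1, d_2$, hence equals $\sigma$ itself, and the remaining task is to prove the symmetric equality $\proj_{\sigma'}(\sigma) = \sigma'$.

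By the same observation, it suffices to exhibit two chambers of $\sigma$ whose $\proj_{\sigma'}$-images are distinct, and I would try the natural candidates $d_1, d_2$. Set $e_k := \proj_{\sigma'}(d_k)$ and argue by contradiction, assuming $e_1 = e_2 =: e$. The tool is the factorisation property recorded right after Lemma~\ref{lem:NestedProj}, namely $\dist(c,d) = \dist(c, \proj_R(c)) + \dist(\proj_R(c), d)$ for any chamber $c$, any residue $R$, and any $d \in \Ch(R)$. Applied with $R = \sigma$ to the four pairs $(c_k, d_\ell)$, and using that $d_1$ and $d_2$ are $i$-adjacent in the panel $\sigma$, one obtains $\dist(c_k, d_k) = n_k$ and $\dist(c_k, d_\ell) = n_k + 1$ for $\ell \neq k$, where $n_k := \dist(c_k, \sigma)$.

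Applied next with $R = \sigma'$ to the pairs $(d_k, c_\ell)$ and their common projection $e$, the same factorisation yields $\dist(d_k, c_\ell) = \dist(d_k, e) + \dist(e, c_\ell)$. Eliminating $\dist(d_k, e)$ by subtracting the equations corresponding to $\ell = 1$ and $\ell = 2$ expresses $\dist(e, c_1) - \dist(e, c_2)$ purely in terms of $n_1 - n_2$; performing this elimination for $k = 1$ and then for $k = 2$ produces two values that differ by $2$, which is the required contradiction. I expect the main obstacle to be recognising that one must invoke the factorisation property symmetrically on both $\sigma$ and $\sigma'$, and that the cancellation depends on the unit discrepancy created by the $i$-adjacency of $d_1$ and $d_2$; once this is in place the rest of the bookkeeping is a straightforward linear computation.
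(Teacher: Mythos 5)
Your proof is correct, but its second half takes a genuinely different route from the paper. Both arguments begin the same way: the distinct projections $d_1 \neq d_2$ force $\proj_\sigma(\sigma')$ to be a panel, hence all of $\sigma$. For the reverse equality the paper passes to an apartment meeting both panels: since $\proj_\sigma(\sigma') = \sigma$, the two panels lie on a common wall of that apartment, and this wall picture immediately rules out $\proj_{\sigma'}(\sigma)$ being a single chamber. You instead stay entirely inside the $W$-metric framework and run a gate-property computation: assuming $\proj_{\sigma'}(d_1) = \proj_{\sigma'}(d_2) = e$, the gate decompositions at $\sigma$ give $\dist(c_k,d_k)=n_k$ and $\dist(c_k,d_\ell)=n_k+1$ (the unit discrepancy coming from $\dist(d_1,d_2)=1$), and the gate decompositions at $\sigma'$ then force $\dist(e,c_1)-\dist(e,c_2)$ to equal both $n_1-n_2-1$ and $n_1-n_2+1$, a contradiction; hence $\proj_{\sigma'}(\sigma)$ contains two chambers of the panel $\sigma'$ and therefore equals $\sigma'$. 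I checked the bookkeeping and it is right. Your version buys independence from the apartment/wall geometry, using only the gate (projection) property recalled in the preliminary discussion (just \emph{before} Lemma~\ref{lem:NestedProj}, incidentally, not after it); the paper's version is shorter once one accepts that two panels each projecting onto the whole of the other lie on a common wall in any apartment meeting both. Both arguments are valid for arbitrary buildings, as the statement requires.
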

\begin{proof}
If two chambers of $\sigma'$ have distinct projections on $\sigma$, then $\proj_\sigma(\sigma')$ is a panel, which is thus the whole of $\sigma$. Therefore, in an apartment intersecting both $\sigma$ and $\sigma'$, we see that those panels lie on a common wall. It follows that $\proj_{\sigma'}(\sigma)$ cannot be reduced to a single chamber. Hence $\proj_{\sigma'}(\sigma) = \sigma'$ and the result follows.
\end{proof}

The following   result shows that two residues  are parallel if and only if they share the same set of walls in every apartment intersecting them both. This useful criterion allows one to detect parallelism of residues by just looking at parallelism among panels. 

\begin{lem}\label{lem:ParallelPanels}
Let $R$ and $R'$ be two residues. Then $R$ and $R'$ are parallel if and only if for all panels $\sigma$ of $R$ and $\sigma'$ of $R'$, the projections $\proj_{R'}(\sigma)$ and $\proj_{R}(\sigma')$ are both panels. 
\end{lem}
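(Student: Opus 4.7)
For the forward implication, parallelism of $R$ and $R'$ provides a bijection between $\Ch(R)$ and $\Ch(R')$ given by $\proj_{R'}$. The image of a panel $\sigma \subseteq R$ then contains $|\Ch(\sigma)| \geq 2$ distinct chambers, so $\proj_{R'}(\sigma)$ has rank at least one; since projections cannot increase rank, $\proj_{R'}(\sigma)$ is itself a panel, and symmetrically so is $\proj_R(\sigma')$.

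For the reverse implication my plan is first to upgrade the hypothesis to the statement that each panel $\sigma \subseteq R$ is in fact parallel to the panel $\sigma' := \proj_{R'}(\sigma)$, and then to argue in a common apartment. Applying Lemma~\ref{lem:NestedProj} to the inclusion $\Ch(\sigma') \subseteq \Ch(R')$ yields $\proj_{\sigma'}(x) = \proj_{\sigma'}(\proj_{R'}(x)) = \proj_{R'}(x)$ for every $x \in \Ch(\sigma)$, since $\proj_{R'}(x) \in \Ch(\sigma')$; hence $\proj_{\sigma'}(\sigma) = \sigma'$. The thickness of $\sigma'$ ensures that two chambers of $\sigma$ are sent to distinct chambers of $\sigma'$, so Lemma~\ref{lem:ParallPanels} gives $\sigma \parallel \sigma'$. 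The symmetric computation shows $\proj_R(\sigma') \parallel \sigma'$ for every panel $\sigma'$ of $R'$.

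To conclude, I will pick an arbitrary $c' \in \Ch(R')$ together with some $c \in \Ch(R)$ and choose an apartment $\Sigma$ containing both. Whenever a panel $\sigma$ of $R$ meets $\Sigma$, the property that any apartment containing a chamber and meeting a residue also contains the corresponding projection forces $\sigma' \cap \Sigma \neq \emptyset$; as in the proof of Lemma~\ref{lem:ConstantDistance}, the parallel panels $\sigma$ and $\sigma'$ then lie on a common wall of $\Sigma$. Running the symmetric argument, $R \cap \Sigma$ and $R' \cap \Sigma$ carry identical wall sets in $\Sigma$. The main obstacle is the final step, where I must deduce that two residues of a Coxeter complex with the same wall set are parallel inside that complex; this is a standard Coxeter-theoretic fact, which I would dispatch by observing that within a Coxeter complex the chambers of each residue are determined by their sides relative to the walls of $\Sigma$, making the projection between the two residues injective in both directions and hence bijective. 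Combining this with the fact that $\proj_{R'}$ and $\proj_{R' \cap \Sigma}$ coincide on $\Ch(R \cap \Sigma)$, one obtains $c' \in \proj_{R'}(\Ch(R))$; as $c'$ was arbitrary, $\proj_{R'}(R) = R'$, and symmetry yields $R \parallel R'$.
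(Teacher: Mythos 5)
Your route is genuinely different from the paper's: the paper proves the contrapositive by a short local argument (if $\proj_R(R')$ is a proper residue of $R$, pick adjacent chambers $c,d$ of $R$ with $c$ in $\Ch(\proj_R(R'))$ and $d$ outside it, and show via Lemma~\ref{lem:ParallPanels} that the panel they share must project to a single chamber of $R'$), whereas you argue directly through apartments and reduce to a statement about residues of a Coxeter complex sharing the same walls. Your forward direction, the upgrade of the hypothesis to ``$\sigma$ is parallel to $\proj_{R'}(\sigma)$'' via Lemma~\ref{lem:NestedProj} and Lemma~\ref{lem:ParallPanels}, the fact that these parallel panels share a wall of $\Sigma$ (as in Lemma~\ref{lem:ConstantDistance}), and the identification of $\proj_{R'}$ with the projection inside $\Sigma$ are all sound.

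The problem lies exactly in the step you flag as the main obstacle. Parallelism of $R\cap\Sigma$ and $R'\cap\Sigma$ means that the two projection maps between them are \emph{surjective}, and your justification --- ``injective in both directions and hence bijective'' --- is not a valid inference once these chamber sets are infinite, which happens whenever the type of $R$ is non-spherical; the lemma is stated and used for arbitrary residues. Moreover, even the asserted injectivity does not follow from ``chambers of a residue are determined by their sides of the walls crossing it'' alone: you also need the gate-property fact that no wall crossing a convex chamber set $B$ separates a chamber $x$ from $\proj_B(x)$. Making that fact explicit both supplies the injectivity and closes the surjectivity gap: if $A=R\cap\Sigma$ and $B=R'\cap\Sigma$ are crossed by the same set of walls, then for $x\in\Ch(A)$ the chamber $\proj_B(x)$ lies on the same side as $x$ of every wall in that common set, and symmetrically for $\proj_A$; since chambers of $A$ (resp.\ $B$) are determined by their sides of these walls, $\proj_A(\proj_B(x))=x$ and $\proj_B(\proj_A(y))=y$, so the two projections are mutually inverse, hence surjective. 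With that one-line repair your proof is complete; as written, the final deduction does not establish the statement in the stated generality.
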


\begin{proof}
The `only if' part is clear from the definition. Assume that $R$ and $R'$ are not parallel. Up to swapping the roles of $R$ and $R'$, we may thus assume that $\proj_R(R')$ is a proper residue of $R$. Let then $c$ and $d$ be a pair of adjacent chambers in $R$ so that $c$ is the projection of some chamber of $R'$ and $d$ is not. Then $c' = \proj_{R'}(c)$ is adjacent to $d' = \proj_{R'}(d)$. If the latter two chambers coincide, then the projection on $R'$ of the panel shared by $c$ and $d$ is a chamber and not a panel, and the desired condition holds. Otherwise the panel shared by $c$ and $d$ is parallel to the panel shared by $c'$ and $d'$ by Lemma~\ref{lem:ParallPanels}. This implies that $c = \proj_R(c')$ and $d=\proj_R(d')$, contradicting the fact that $d$ does not belong to the chamber-set of $\proj_R(R')$.
\end{proof}

Another useful fact is the following.

\begin{lem}\label{lem:ParallelProj}
Let $R$ and $R'$ be two residues. 

Then  $\proj_{R'}(R)$ and $\proj_{R}(R')$ are parallel.
\end{lem}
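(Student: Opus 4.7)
Denote $S = \proj_{R'}(R)$ and $T = \proj_R(R')$, so that $\Ch(S) \subseteq \Ch(R')$ and $\Ch(T) \subseteq \Ch(R)$. The goal is to verify the two equalities $\proj_S(T) = S$ and $\proj_T(S) = T$. My plan is to show that $\proj_R$ and $\proj_{R'}$ restrict to mutually inverse bijections $\Ch(S) \leftrightarrow \Ch(T)$, and then transfer this to $\proj_S, \proj_T$ via Lemma~\ref{lem:NestedProj}.

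The inclusions $\proj_{R'}(T) \subseteq S$ and $\proj_R(S) \subseteq T$ are immediate from the defining chamber-set equalities. Next, invoking Lemma~\ref{lem:NestedProj} with the nested pair $\Ch(S) \subseteq \Ch(R')$, for any $e \in \Ch(T)$ one has $\proj_S(e) = \proj_S(\proj_{R'}(e)) = \proj_{R'}(e)$, the last equality because $\proj_{R'}(e) \in \Ch(S)$ is fixed by $\proj_S$. Symmetrically $\proj_T(d) = \proj_R(d)$ for $d \in \Ch(S)$. Therefore, once we know that $\proj_R|_S$ and $\proj_{R'}|_T$ are mutually inverse bijections of chamber sets, we obtain $\Ch(\proj_S(T)) = \proj_{R'}(\Ch(T)) = \Ch(S)$ and $\Ch(\proj_T(S)) = \proj_R(\Ch(S)) = \Ch(T)$, yielding parallelism.

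The heart of the proof is to show that back-and-forth projection stabilises after one round: for each $d \in \Ch(S)$, setting $e = \proj_R(d) \in \Ch(T)$ and $d' = \proj_{R'}(e) \in \Ch(S)$, we must have $d' = d$. Fix $c \in \Ch(R)$ with $d = \proj_{R'}(c)$. I plan to combine the three minimal-gallery distance identities
\[
\dist(c, d') = \dist(c, d) + \dist(d, d'), \quad \dist(d, c) = \dist(d, e) + \dist(e, c), \quad \dist(e, d) = \dist(e, d') + \dist(d', d),
\]
coming respectively from $d = \proj_{R'}(c)$, $e = \proj_R(d)$, and $d' = \proj_{R'}(e)$. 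Substituting the last two into the first produces $\dist(c, d') = \dist(e, d') + 2\,\dist(d, d') + \dist(e, c)$; comparing with the triangle inequality $\dist(c, d') \le \dist(c, e) + \dist(e, d')$ then forces $2\,\dist(d, d') \le 0$, hence $d = d'$. Swapping the roles of $R$ and $R'$ gives the symmetric statement that $\proj_R \circ \proj_{R'}$ fixes $\Ch(T)$ pointwise.

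The main obstacle is the distance juggling in the third paragraph; keeping track of which minimal factorisation to invoke is delicate, but once the three identities are written out the algebra is essentially forced. A more geometric alternative would be to pass to an apartment $\Sigma$ containing a pair of chambers realising $\dist(R, R')$ and verify that $S \cap \Sigma$ and $T \cap \Sigma$ are crossed by the same set of walls, then invoke Lemma~\ref{lem:ParallelPanels}; but the $W$-metric argument above avoids leaving the combinatorial setting and applies verbatim to arbitrary (not necessarily thick or semi-regular) buildings.
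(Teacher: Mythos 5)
Your proof is correct, but it follows a genuinely different route from the paper's. Writing $S = \proj_{R'}(R)$ and $T = \proj_{R}(R')$, the paper argues panel-by-panel: each panel $\sigma$ of $T$ is of the form $\proj_R(\sigma')$ for a panel $\sigma'$ of $R'$, hence parallel to $\sigma'$, and Lemma~\ref{lem:NestedProj} then shows that $\proj_{R'}(\sigma) = \proj_{S}(\sigma)$ is a panel; the conclusion follows from the panel-wise criterion of Lemma~\ref{lem:ParallelPanels}. You instead argue at the level of chambers: your three gate-property identities are all legitimate instances of the gate property (since $d, d' \in \Ch(R')$ and $c, e \in \Ch(R)$), and they force $2\dist(d,d') \le 0$, so $\proj_{R'} \circ \proj_R$ fixes $\Ch(S)$ pointwise, and symmetrically for $T$; together with the identifications $\proj_S|_{\Ch(T)} = \proj_{R'}|_{\Ch(T)}$ and $\proj_T|_{\Ch(S)} = \proj_R|_{\Ch(S)}$ (your application of Lemma~\ref{lem:NestedProj} to a single chamber, viewed as a rank-zero residue, is harmless and consistent with the paper's own usage, e.g.\ in Corollary~\ref{cor:balls}, and could also be replaced by a direct appeal to the gate property of $R'$), this yields $\proj_S(T) = S$ and $\proj_T(S) = T$. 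What your approach buys is self-containedness: it bypasses Lemmas~\ref{lem:ParallPanels} and~\ref{lem:ParallelPanels} entirely and recovers the classical fact that back-and-forth projection between two residues stabilises after one step, inducing mutually inverse bijections on chamber sets; the paper's proof instead leans on the panel machinery it has already set up and will reuse for the right-angled results. One small correction to your closing remark: generality is not a point of difference, since the paper's lemma and its proof are likewise valid for arbitrary buildings, with no thickness or semi-regularity assumption.
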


\begin{proof}
Let $\sigma$ be a panel contained in $\proj_{R}(R')$. Then there is a panel $\sigma' $ in $R'$ such that $\sigma = \proj_R(\sigma')$. It follows that $\sigma$ and $\sigma'$ are parallel. Therefore, we have  
$$
\sigma' = \proj_{\sigma'}(\sigma) = \proj_{\sigma'}(\proj_{R'}(\sigma)),
$$
where the second equality follows from Lemma~\ref{lem:NestedProj}. It follows that $\proj_{R'}(\sigma)$ is a panel. Clearly, we have $\proj_{R'}(\sigma) = \proj_{\proj_{R'}(R)}(\sigma)$. This shows that the projection of $\sigma$ to $\proj_{R'}(R)$ is a panel. 

By symmetry, the projection of any panel of $\proj_{R'}(R)$ to $\proj_{R}(R')$ is also a panel. 
By Lemma~\ref{lem:ParallelPanels}, we infer that $\proj_{R'}(R)$ and $\proj_{R}(R')$ are parallel.
\end{proof}

\medskip

We shall see that parallelism of residues has a very special behaviour in right-angled buildings. For instance, we have the following useful criterion. 

\begin{prop}\label{prop:CriterionParallelRAB}
Let $X$ be a right-angled building of type $(W, I)$. 

\begin{enumerate}[(i)]
\item Two parallel residues have the same type. 

\item Given a residue $R$  of type $J$, a residue $R'$ is parallel to $R$ if and only if  $R'$ is of type $J$ and $R$ and $R'$ are both contained in a residue of type $J \cup J^\perp$. 
\end{enumerate}
\end{prop}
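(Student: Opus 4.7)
The plan is to pass to an apartment $\Sigma$ meeting both $R$ and $R'$, fix a base chamber $c_0 \in R \cap \Sigma$, and identify $\Ch(\Sigma)$ with $W$, so that $R \cap \Sigma = W_J c_0$ and $R' \cap \Sigma = u W_{J'} c_0$ for some $u \in W$ and $J' \subseteq I$, where $J'$ is the type of $R'$. Both parts then translate into statements about cosets of standard parabolic subgroups of $W$.

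For (i), the observation used at the start of the proof of Lemma~\ref{lem:ConstantDistance} gives that $R \cap \Sigma$ and $R' \cap \Sigma$ are crossed by the same walls of $\Sigma$, so their setwise stabilisers in $W$ coincide: $W_J = u W_{J'} u\inv$. The well-known fact that conjugate standard parabolic subgroups of a Coxeter system are equal then yields $J = J'$. A particularly short justification in the right-angled case goes as follows: every generator $s_j$ with $j \in J$ is a reflection in $u W_{J'} u\inv$, hence conjugate in $W$ to some $s_{j'}$ with $j' \in J'$; since distinct simple generators of a right-angled Coxeter group are never conjugate, $j \in J'$, and by symmetry $J = J'$.

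For (ii), the ``if'' direction is immediate from the last sentence of Lemma~\ref{lem:ProductRes} applied to $J_1 = J$ and $J_2 = J^\perp$ (disjoint with $[J, J^\perp] = 1$ by the definition of $J^\perp$). For the ``only if'' direction, (i) gives $R' \cap \Sigma = u W_J c_0$; after replacing $u$ by the minimal-length representative of its coset $u W_J$, the parallelism rewrites as $u \in \norma_W(W_J)$. The proof then reduces to showing that any such $u$ lies in $W_{J^\perp}$, for then $R' \cap \Sigma \subseteq W_{J^\perp} W_J c_0 \subseteq W_{J \cup J^\perp} c_0$, so that both $R$ and $R'$ lie in $\Res_{J \cup J^\perp}(c_0)$.

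To establish $u \in W_{J^\perp}$, I rely on two features specific to right-angled Coxeter systems: (a) distinct simple generators are never conjugate in $W$, and (b) the centraliser formula $\centra_W(s_j) = W_{\{j\} \cup \{j\}^\perp}$ for each $j \in I$. For each $j \in J$, the reflection $u s_j u\inv$ belongs to $W_J$, so equals $v s_k v\inv$ for some $v \in W_J$ and $k \in J$; by (a) one has $k = j$, whence $v\inv u \in \centra_W(s_j)$ and (b) gives $u \in W_J \cdot W_{\{j\} \cup \{j\}^\perp} \subseteq W_{J \cup \{j\}^\perp}$. Letting $j$ range over $J$ and invoking the standard identity $\bigcap_K W_K = W_{\bigcap K}$ for intersections of standard parabolic subgroups, we obtain $u \in W_{J \cup J^\perp} = W_{J^\perp} \times W_J$; the minimality of $u$ in $u W_J$ then forces $u \in W_{J^\perp}$. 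The main obstacle in this outline is the centraliser formula (b), which is where the right-angled hypothesis enters essentially; it can be handled either by citing a standard reference on right-angled Coxeter groups or, equivalently, by a short wall-stabiliser analysis in the Davis realisation of $\Sigma$.
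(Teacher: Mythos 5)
Your proposal is correct in substance, but it takes a genuinely different route from the paper, most notably for part (ii). The paper stays entirely inside the building: for the ``only if'' direction it inducts along a minimal gallery from $c \in \Ch(R)$ to $\proj_{R'}(c)$, using Lemmas~\ref{lem:NestedProj}, \ref{lem:ProductRes} and~\ref{lem:ParallelProj} to show that each panel crossed has type in $J^\perp$ and that the intermediate $J$-residue is again parallel to $R'$; part (i) is deduced from Lemma~\ref{lem:ParallelPanels} together with the fact that panels on a common wall of an apartment have the same type. You instead reduce everything to the Weyl group of a single apartment: parallelism gives, via the wall observation made at the start of the proof of Lemma~\ref{lem:ConstantDistance} (plus the standard fact that the reflections whose walls cross a coset $wW_J$ are exactly those of $wW_Jw^{-1}$), the equality $W_J = uW_{J'}u^{-1}$, and you then show that a minimal coset representative normalising $W_J$ lies in $W_{J^\perp}$ --- in effect proving $N_W(W_J) = W_J \times W_{J^\perp}$ for right-angled Coxeter groups. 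This rests on two RACG-specific inputs: non-conjugacy of distinct simple generators (correct, since all $m_{i,j}$ are even or infinite) and the centraliser formula $C_W(s_j) = W_{\{j\}\cup j^\perp}$, which you only cite; it is the one external ingredient the paper's argument avoids by reusing its projection machinery. Your route buys brevity and an explicit structural statement about normalisers; the paper's buys self-containedness within lemmas that are reused later. One caveat: the ``well-known fact that conjugate standard parabolic subgroups of a Coxeter system are equal'' is false in general (already in type $A_2$ the two rank-one standard parabolics are conjugate), so your right-angled justification via non-conjugacy of generators is not an optional shortcut but the actual argument --- fortunately it is correct and is all that the proposition requires.
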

 We recall that  $J^\perp$ is the subset of $I$ defined by 
 $$J^\perp = \{ i \in I \; | \; i \not \in J, \  ij = ji \text{ for all } j \in J\}.$$
In the special case where $J$ is a singleton, say $J = \{j\}$, it is customary to make a slight abuse of notation  and write 
$$j = J   \hspace{1cm} \text{and} \hspace{1cm}  j^\perp = J^\perp$$
when referring to the type of a residue; this should not cause any confusion.

\begin{proof}[Proof of Proposition~\ref{prop:CriterionParallelRAB}]
(i) In a right-angled building, any two panels lying on a common wall in some apartment have the same type. That two parallel residues have the same type is thus a consequence of Lemma~\ref{lem:ParallelPanels}.

\medskip \noindent
(ii) Any two residues of type $J$ in a building of type $J \cup J^\perp$ are parallel. This implies that the `if' part holds. 

Assume now that $R$ and $R'$ are parallel. 
Let $c \in \Ch(R)$ and $c' = \proj_{R'}(c)$. We show by induction on $\dist(c, c')$ that the type of every panel crossed by a minimal gallery from $c$ to $c'$ belongs to $J^\perp$. Let $c = d_0, d_1, \dots, d_n = c'$ be such a minimal gallery. 
Let also $i$ be the type of the panel shared by $c= d_0$ and $d_1$, and let $\sigma_i$ denote that $i$-panel. For any $j \in J$, let also $\sigma_j$ be the $j$-panel of $c$.  By Lemma~\ref{lem:ParallelPanels}, the projection $\sigma'_j = \proj_{R'}(\sigma_j)$ is a panel. The panels $\sigma_j$ and $\sigma'_j$ lie therefore on a common wall in any apartment containing them both. If $i$ and $j$ did not commute, then the wall $\mathcal W_i$ containing the  panel $\sigma_i$ in such an apartment would be disjoint    from the wall $\mathcal W_j$ containing $\sigma_j$. This implies $c'$ is separated from $\mathcal W_j$ by the wall $\mathcal W_i$, which prevents the panel $\sigma'_j$ from lying on $\mathcal W_j$. This shows that $ij = ji$. In other words, we have $i \in J^\perp$. 

Let next $R_1$ be the $J$-residue containing $d_1$, and let $S$ be the $(J \cup \{i\})$-residue containing $c$. Thus $R$ and $R_1$ are both contained in $S$. 

We claim that $R_1$ is parallel to $R'$. 
In order to establish the claim, we first notice that $\proj_{R'}(S) = R'$, since $R \subset S$. By Lemma~\ref{lem:ParallelProj}, the residue $R'=\proj_{R'}(S)$ is parallel to $\proj_{S}(R')$. In particular  $\proj_{S}(R')$ is of type $J$ by part (i). Since $i \in J^\perp$,  all $J$-residues in $S$ contain exactly one chamber of $\sigma_i$. Thus $\sigma_i$ is not contained in $\proj_S(R')$, and it follows that all chambers of $R'$ have the same projection on $\sigma_i$; that projection is the unique chamber of $\Ch(\sigma_i) \cap \Ch(\proj_S(R'))$. By construction, we have $\proj_{\sigma_i}(c') = d_1$; we deduce that $d_1$ belongs to $\Ch(\proj_S(R'))$. This proves that $\proj_S(R')$ is the $J$-residue of $d_1$; it coincides therefore with $R_1$. 

Thus we have shown that $R'=\proj_{R'}(S)$ and that $R_1= \proj_{S}(R')$, and those residues are parallel by  Lemma~\ref{lem:ParallelProj}. The claim stands proven.

The claim implies by induction on $n$ that $R_1$ and $R'$ are contained in a common residue of type $J \cup J^\perp$. That residue must also contain $R$, since   $R$ and $R_1$ are  contained in a common residue of type $J \cup \{i\} \subseteq J \cup J^\perp$. This finishes the proof. 
\end{proof}

\begin{cor}\label{cor:ParallelEquivRel}
Let $X$ be a right-angled building. 

Then parallelism of residues is an equivalence relation. 
\end{cor}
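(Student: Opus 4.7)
The plan is to deduce the corollary directly from Proposition~\ref{prop:CriterionParallelRAB}. Reflexivity is immediate (any residue $R$ satisfies $\proj_R(R) = R$), and symmetry is built into the definition of parallelism. The only content is transitivity, and this is where I would invoke the characterization given by the proposition.

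Suppose $R \parallel R'$ and $R' \parallel R''$. By Proposition~\ref{prop:CriterionParallelRAB}(i), the three residues share a common type $J \subseteq I$. By Proposition~\ref{prop:CriterionParallelRAB}(ii) applied to each parallel pair, there exist a residue $T_1$ of type $J \cup J^\perp$ containing both $R$ and $R'$, and a residue $T_2$ of type $J \cup J^\perp$ containing both $R'$ and $R''$.

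The key observation is that $R'$, being a $J$-residue with $J \subseteq J \cup J^\perp$, is contained in a \emph{unique} residue of type $J \cup J^\perp$; therefore $T_1 = T_2$. Consequently $R$ and $R''$ are $J$-residues lying in a common residue of type $J \cup J^\perp$, so the `if' direction of Proposition~\ref{prop:CriterionParallelRAB}(ii) yields $R \parallel R''$.

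There is essentially no obstacle here: the geometric work has already been done in establishing the characterization of parallelism in Proposition~\ref{prop:CriterionParallelRAB}(ii), and transitivity becomes a formal consequence of the uniqueness of the $(J \cup J^\perp)$-residue through a given $J$-residue. The proof could be written in just a few lines.
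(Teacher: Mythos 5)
Your proof is correct and is exactly the paper's argument: the paper simply states that the corollary follows from Proposition~\ref{prop:CriterionParallelRAB}(ii), and your write-up spells out that deduction, including the (correct) observation that a $J$-residue lies in a unique residue of type $J \cup J^\perp$, which the paper also uses implicitly when it later defines $\overline{R}$.
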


\begin{proof}
This follows from Proposition~\ref{prop:CriterionParallelRAB}(ii).
\end{proof}

We emphasize that parallelism of panels is \emph{not an equivalence relation} in general. In fact, we have the following characterization of right-angled buildings.

\begin{prop}\label{prop:ParallelEquivRel}
Let $X$ be a thick building. 

Then parallelism of residues is an equivalence relation if and only if $X$ is right-angled. 
\end{prop}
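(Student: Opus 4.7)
The \textbf{if} direction is exactly Corollary~\ref{cor:ParallelEquivRel}, so what needs proof is the converse. My plan is to argue by contrapositive: assuming $X$ is thick and $(W,I)$ is not right-angled, I will exhibit three panels $\sigma_1, \sigma_2, \sigma_3$ with $\sigma_1 \parallel \sigma_2$ and $\sigma_2 \parallel \sigma_3$ but $\sigma_1 \not\parallel \sigma_3$, which contradicts transitivity.

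Non-right-angledness yields $i \neq j \in I$ with $3 \leq m_{ij} = m < \infty$, and I fix a rank-$2$ residue $R$ of type $\{i,j\}$; it is a thick generalized $m$-gon. The three panels will all live inside $R$, which is legitimate since parallelism among subresidues of $R$ is the same whether computed inside $R$ or inside $X$: for $\sigma, \tau \subseteq R$, Lemma~\ref{lem:NestedProj} applied with $R$ in the role of $S$ gives $\proj_\sigma(\tau) = \proj_\sigma(\proj_R(\tau))$, and $\proj_R(\tau) = \tau$ since $\tau \subseteq R$.

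The chain is built using opposition inside $R$. Fix a panel $\sigma_2$ and two apartments $A_1, A_2$ of $R$ both containing $\sigma_2$ but differing beyond it; this is available by thickness of $R$, since any chamber of $\sigma_2$ not lying in $A_1$ extends to an apartment $A_2 \neq A_1$. Let $\sigma_\nu$ ($\nu \in \{1,3\}$) be the panel of $A_\nu$ opposite $\sigma_2$ inside $A_\nu$. Inside each apartment, $\sigma_\nu$ and $\sigma_2$ share a wall, and two of the chambers of $\sigma_\nu$ lying in $A_\nu$ have distinct projections onto $\sigma_2$; Lemma~\ref{lem:ParallPanels} thus gives $\sigma_\nu \parallel \sigma_2$, hence $\sigma_1 \parallel \sigma_2 \parallel \sigma_3$.

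The final and most delicate step is the non-parallelism $\sigma_1 \not\parallel \sigma_3$, and it splits by the parity of $m$. For $m$ odd, the longest element of the Weyl group of $R$ swaps the types $i$ and $j$, so $\sigma_1$ and $\sigma_3$ are both of the type opposite to that of $\sigma_2$; the direct projective-plane-style computation (two distinct same-type panels in a projective plane have all their projections collapsing to a single chamber) rules out parallelism. For $m$ even, opposition preserves types and $\sigma_1, \sigma_2, \sigma_3$ are all of the same type $i$; here I refine the choice of $A_2$ so that the $i$-panels $\sigma_1, \sigma_3$ end up sharing a wall with $\sigma_2$ in \emph{incompatible} walls of distinct apartments, so that any chamber of $\sigma_3$ has the same projection on $\sigma_1$, again forcing $\proj_{\sigma_1}(\sigma_3)$ to be a single chamber. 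In both cases the obstruction comes from the branching of the wall through $\sigma_2$ as the ambient apartment varies, and thickness is exactly what guarantees this branching is non-trivial; the main obstacle in writing the argument up carefully is the parity split and the verification in the even case that $A_2$ can be picked so that $\sigma_1$ and $\sigma_3$ never lie on a common wall in any apartment of $R$.
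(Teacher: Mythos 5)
Your overall frame is the same as the paper's (reduce to a thick irreducible rank-$2$ residue $R$ of type $\{i,j\}$ with $3\le m_{ij}<\infty$ and exhibit a failure of transitivity there; the `if' direction is Corollary~\ref{cor:ParallelEquivRel}), but the decisive step is not actually carried out, and as set up it does not work. You take $\sigma_1,\sigma_3$ to be the panels opposite $\sigma_2$ in two apartments $A_1\neq A_2$ through $\sigma_2$. First, nothing in this choice guarantees $\sigma_1\neq\sigma_3$: two distinct apartments containing the simplex $\sigma_2$ can perfectly well have the \emph{same} opposite panel (already in a projective plane: many triangles with vertex $p$ have the same opposite side), and if $\sigma_1=\sigma_3$ there is no violation of transitivity. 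Your odd-$m$ argument silently assumes distinctness, so it has the same gap. Second, and more seriously, in the even case distinctness is not enough: for even $m$ two distinct panels of the same type \emph{can} be opposite, hence parallel, so you must arrange that $\sigma_1$ and $\sigma_3$ are neither equal nor opposite. That is exactly the content of your promised ``refinement of the choice of $A_2$,'' which you acknowledge you have not done; it is the whole difficulty, not a routine verification. (For odd $m$ the clean statement you want is not a ``projective-plane computation'' but the observation that a parallel pair of distinct panels must lie on a common wall of an apartment, a wall of a rank-$2$ spherical Coxeter complex consists of two opposite panels, and opposition reverses types when $m$ is odd -- but again only after distinctness is secured.)

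The paper sidesteps all of this by choosing the \emph{non}-parallel pair first: take two distinct panels $\sigma,\sigma'$ of the same type in $R$ at minimal distance. They are not opposite, hence not parallel (parallel distinct panels lie on a common wall, and in an apartment of $R$ a wall is a pair of opposite panels). Then it invokes \cite[3.30]{TitsLN} to produce a single panel $\tau$ opposite -- hence parallel -- to both, giving $\sigma\parallel\tau\parallel\sigma'$ with $\sigma\not\parallel\sigma'$. If you want to keep your ``two apartments through $\sigma_2$'' construction, you would need to prove an existence statement of comparable strength to that citation (that $\sigma_2$ admits two opposites at distance strictly between $0$ and $m$ in the incidence graph), using thickness; as written, your proposal asserts rather than proves it.
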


\begin{proof}
By Corollary~\ref{cor:ParallelEquivRel}, it suffices to show that if $X$ is not right-angled, then parallelism of panels is not an equivalence relation. If $X$ is not right-angled, then it contains a residue $R$ which is an irreducible  generalized polygon. Let $\sigma$ and $\sigma'$ be two distinct panels of the same type in $R$, at minimal distance from one another. It follows that $\sigma$ and $\sigma'$ are not opposite in $R$, and thus not parallel since they do not lie on a common wall in apartments containing $\sigma$ and $\sigma'$. By \cite[3.30]{TitsLN}, there is a panel $\tau$ in  $R$ which is opposite both $\sigma$ and $\sigma'$. Thus $\sigma$ is parallel to $\tau$ and $\tau$ is parallel to $\sigma'$. Parallelism is thus not a transitive relation. 
\end{proof}

\section{Wall-residues and wings}

Let $X$ be a right-angled building of type $(W, I)$. 

\medskip
Since parallelism of residues is an equivalence relation by Corollary~\ref{cor:ParallelEquivRel}, it is natural to ask what the equivalence classes are. The answer is in fact already provided by Proposition~\ref{prop:CriterionParallelRAB}:  the classes of parallel $J$-residues  are the sets of $J$-residues contained in a common residue of type $J \cup J^\perp$. 

Given a residue $R$ of type $J$, we will denote the unique residue of type $J \cup J^\perp$ containing $R$ by $\overline R$. The special case of panels is the most important one. A residue of the form $\overline \sigma$, with $\sigma$ a panel, will be called a 
\textbf{wall-residue}. 
  
In the   case when $(W, I)$ is a right-angled Fuchsian Coxeter group,  wall-residues  are what Marc Bourdon calls \textbf{wall-trees}, see \cite{Bourdon}. The terminology is motivated by the following observation: if the intersection of a wall-residue with an apartment is non-empty, then it is a wall in that apartment. 

\medskip
Our next step is to show how residues determine a partition of the chamber-set of the ambient building into convex pieces. To this end, we    need some additional terminology and notation. 

To any $c \in \Ch(X)$ and $J \subseteq I$, we associate the set
$$
X_J(c) = \{x \in \Ch(X) \; | \; \proj_{\sigma}(x) = c\},
$$
where $\sigma =\Res_J(c)$ is the $J$-residue of the chamber $c$. We call $X_J(c)$ the \textbf{$J$-wing} containing $c$. If $J = \{i\}$ is a singleton, we write $X_i(c)$ and call it the \textbf{$i$-wing} of $c$. A \textbf{wing} is a $J$-wing for some $J \subseteq I$. The following results record some basic properties of wings. 

\begin{lem}\label{lem:BasicWings}
Let $X$ be right-angled building of type $(W, I)$, let $J \subseteq I$ and $c \in \Ch(X)$. Then we have:
\begin{enumerate}[(i)]
\item $X_J(c) = \bigcap_{i \in J} X_i(c).$

\item
$X_J(c) = X_J(c')$   for all $c' \in X_J(c) \cap \Res_{J \cup J^\perp}(c)$. 

\item $\Res_{J^\perp}(c) =  X_J(c) \cap \Res_{J \cup J^\perp}(c) = \Res_{J^\perp}(c')$  for all $c' \in X_J(c) \cap \Res_{J \cup J^\perp}(c)$. 
\end{enumerate}
\end{lem}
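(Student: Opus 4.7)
The plan is to prove the three items in the order (i), (iii), (ii), the last being a formal consequence of (iii) combined with Lemma~\ref{lem:NestedProj}.

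For (i), the inclusion $X_J(c) \subseteq \bigcap_{i \in J} X_i(c)$ is direct from Lemma~\ref{lem:NestedProj} applied to the nesting $\Res_i(c) \subseteq \Res_J(c)$: if $\proj_{\Res_J(c)}(x) = c$, then $\proj_{\Res_i(c)}(x) = \proj_{\Res_i(c)}(c) = c$. For the reverse inclusion, I take $x \in \bigcap_{i \in J} X_i(c)$, set $d = \proj_{\Res_J(c)}(x)$, and argue $d = c$ by contradiction. Using convexity of $\Res_J(c)$, pick a minimal gallery $c = d_0, d_1, \dots, d_n = d$ inside it, and let $i \in J$ be the type of the initial adjacency $d_0 \sim d_1$. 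Since the tail $d_1, \dots, d_n$ is also a minimal gallery, $d_1 \in \Ch(\Res_i(c))$ is strictly closer to $d$ than $c$, so $\proj_{\Res_i(c)}(d) \neq c$; yet Lemma~\ref{lem:NestedProj} gives $\proj_{\Res_i(c)}(x) = \proj_{\Res_i(c)}(d)$, contradicting $x \in X_i(c)$.

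For (iii), I set $T = \Res_{J \cup J^\perp}(c)$ and apply Lemma~\ref{lem:ProductRes} to the commuting pair $(J, J^\perp)$, which identifies $\Ch(T)$ with $\Ch(\Res_J(c)) \times \Ch(\Res_{J^\perp}(c))$ via the pair of projections. Under this identification, $X_J(c) \cap T$ is the fibre $\{c\} \times \Ch(\Res_{J^\perp}(c))$ of the first projection, which is the same as the image of $\Res_{J^\perp}(c) \subseteq \Ch(T)$ (the $J^\perp$-residue through $c$ being the corresponding axis of the product centred at $c$); hence $X_J(c) \cap T = \Res_{J^\perp}(c)$. For $c' \in X_J(c) \cap T$ this gives $c' \in \Res_{J^\perp}(c)$, so $\Res_{J^\perp}(c') = \Res_{J^\perp}(c)$; running the same argument with $c'$ in place of $c$, noting that $\Res_{J \cup J^\perp}(c') = T$, yields $X_J(c') \cap T = \Res_{J^\perp}(c')$, completing the chain of equalities.

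For (ii), Lemma~\ref{lem:NestedProj} applied to $\Res_J(c) \subseteq T$ gives $\proj_{\Res_J(c)}(x) = \proj_{\Res_J(c)}(\proj_T(x))$ for every $x \in \Ch(X)$, so $x \in X_J(c)$ if and only if $\proj_T(x) \in X_J(c) \cap T$. The same equivalence holds with $c'$ in place of $c$, and by (iii) the two intersections coincide, giving $X_J(c) = X_J(c')$. The only step demanding real combinatorial care is the reverse direction of (i), where one must pick the initial panel type of a minimal gallery inside $\Res_J(c)$ in order to apply Lemma~\ref{lem:NestedProj}; the rest of the lemma amounts to bookkeeping with the product decomposition of Lemma~\ref{lem:ProductRes}.
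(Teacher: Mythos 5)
Your proof is correct and rests on exactly the same ingredients as the paper's: the convexity/minimal-gallery argument plus Lemma~\ref{lem:NestedProj} for the reverse inclusion in (i), and the product decomposition of Lemma~\ref{lem:ProductRes} (via projections to the residue of type $J\cup J^\perp$) for the other two parts. The only difference is cosmetic: the paper proves (ii) directly from Lemmas~\ref{lem:NestedProj} and~\ref{lem:ProductRes} and then deduces (iii), whereas you establish (iii) first and deduce (ii), which is an equally valid ordering.
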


\begin{proof}
(i)  The inclusion $\subseteq$ is clear. To check the reverse inclusion, let $x$ be a chamber whose projection onto $R= \Res_J(c)$ is different from $c$. Then there is a minimal gallery from $x$ to $c$ via $x' = \proj_R(x)$. Let $i$ be type of the last panel  crossed by that gallery, and let $\sigma$ be that panel. By construction we have $\proj_\sigma(x) \neq c$. Moreover, since $x' \neq c$, we have $i \in J$. This implies that $x \not \in X_i(c)$, thereby proving (i).

\medskip \noindent (ii) 
Let $x \in \Ch(X)$ and set $y= \proj_{\Res_{J \cup J^\perp}}(x)$. Let also $R = \Res_J(c)$ and $R' = \Res_J(c')$. By Lemma~\ref{lem:NestedProj}, we have $\proj_{R}(x) = \proj_R(y)$ and $\proj_{R'}(x) = \proj_{R'}(y)$. Moreover, since $\proj_R(c') = c$ by hypothesis, we infer from Lemma~\ref{lem:ProductRes} that $\proj_R(y) =c$ if and only if $\proj_{R'}(y) =c'$. This proves that $X_J(c) = X_J(c')$. 

\medskip \noindent (iii) 
Lemma~\ref{lem:ProductRes} also implies that $\Res_{J^\perp}(c) =  X_J(c) \cap \Res_{J \cup J^\perp}(c)$. The desired equality thus follows using part (ii). 
\end{proof}

\begin{prop}\label{prop:WingConvex}
In a right-angled building, wings are convex. 
\end{prop}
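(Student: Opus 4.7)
The first move is to reduce to the one-letter case. Lemma~\ref{lem:BasicWings}(i) gives $X_J(c) = \bigcap_{i \in J} X_i(c)$, and an intersection of convex subsets is convex, so it suffices to prove that each $i$-wing $X_i(c)$ is convex.

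Write $\sigma = \Res_i(c)$, fix two chambers $x, y \in X_i(c)$ together with a minimal gallery $x = d_0, d_1, \dots, d_n = y$, and aim to show every $d_k$ also lies in $X_i(c)$. The key pointwise observation is that for any two adjacent chambers $d, d'$ one has $\proj_\sigma(d) = \proj_\sigma(d')$, unless the adjacency type is $i$ and the $i$-panel of $d, d'$ is parallel to $\sigma$. Indeed, by Lemma~\ref{lem:ParallPanels} distinct projections on $\sigma$ force the panel through $d, d'$ to be parallel to $\sigma$; Proposition~\ref{prop:CriterionParallelRAB}(i) then forces the adjacency to be of type $i$, and Proposition~\ref{prop:CriterionParallelRAB}(ii) places that $i$-panel inside $\overline{\sigma} = \Res_{\{i\} \cup i^\perp}(c)$.

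Now suppose for contradiction that some $d_k \notin X_i(c)$. Setting $e_k = \proj_\sigma(d_k)$, the sequence $(e_k)$ starts and ends at $c$ but is not constant, so by the above observation it must change value at (at least) two positions $k_1 < k_2$, each corresponding to an $i$-edge whose $i$-panel is parallel to $\sigma$. Both of these $i$-panels sit in $\overline{\sigma}$, so $d_{k_1}$ and $d_{k_2 - 1}$ both lie in $\overline{\sigma}$. Since residues are convex, the intermediate sub-gallery $d_{k_1}, \dots, d_{k_2 - 1}$ is entirely contained in $\overline{\sigma}$, and therefore all its edge types lie in $\{i\} \cup i^\perp$. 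Consequently the subword of the reduced word of the gallery between positions $k_1$ and $k_2$ inclusive has the form $s_i \cdot w \cdot s_i$, where $w$ is a word in letters all commuting with $s_i$. Commuting the leftmost $s_i$ rightward through $w$ and applying $s_i^2 = 1$ yields a strictly shorter word representing the same Coxeter element, contradicting minimality of the gallery.

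The only delicate point is the commutation-and-cancellation step: it uses in a crucial way that the Coxeter system is right-angled, since only then are all generators $s_j$ with $j \in i^\perp$ genuinely commuting with $s_i$ (rather than merely braiding with it). This is also where Proposition~\ref{prop:CriterionParallelRAB}(ii) is essential, for it is what guarantees that the intermediate sub-gallery is trapped in a residue whose type $\{i\} \cup i^\perp$ matches exactly the set of generators commuting with $s_i$; without this matching, the cancellation would not go through.
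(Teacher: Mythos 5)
Your proof is correct, and it shares the paper's skeleton but finishes by a different mechanism. Like the paper, you reduce to a single $i$-wing via Lemma~\ref{lem:BasicWings}(i) and observe, via Lemma~\ref{lem:ParallPanels} and Proposition~\ref{prop:CriterionParallelRAB}, that the projection to $\sigma$ can only change across an $i$-panel parallel to $\sigma$, hence inside $\overline\sigma$. From there the paper argues geometrically: it takes the first exit and last re-entry of the wing, projects the intermediate chambers onto $\Res_{i^\perp}(c)$ (using Lemma~\ref{lem:BasicWings}(iii) and the fact that projections do not increase distances), and produces a gallery shorter by two, contradicting minimality. You instead argue at the level of the Coxeter group: convexity of residues traps the segment between two change-positions inside $\overline\sigma$, so the type word contains a subword $s_i\, w\, s_i$ with all letters of $w$ in $\{i\}\cup i^\perp$, and commutation plus $s_i^2=1$ shows the type word is not reduced, contradicting minimality of the gallery. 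Both are sound; your route leans on two ingredients the paper's proof avoids, namely convexity of residues (standard, but not among the stated lemmas) and the equivalence "minimal gallery $\Leftrightarrow$ reduced type", while making the role of right-angledness maximally explicit at the word level; the paper's projection argument stays entirely within the toolkit it has just built (Lemmas~\ref{lem:NestedProj}, \ref{lem:ProductRes}, \ref{lem:BasicWings}). Two cosmetic points: your indexing of the second change-position is off by one ($d_{k_2}$ rather than $d_{k_2-1}$ lies on the crossing panel, depending on your edge convention), and it is worth saying explicitly that $w$ may contain further letters $s_i$ -- harmless, since $s_i$ commutes with itself, but the cleanest choice is to take two consecutive change-positions so that $w$ is a word in $i^\perp$ only.
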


\begin{proof}
Let $X$ be right-angled building of type $(W, I)$. Fix $c  \in \Ch(X)$ and $J \subseteq I$.  By Lemma~\ref{lem:BasicWings}(i) it suffices to prove that a wing of the form $X_i(c)$ with $i \in I$ is convex. Let $\sigma$ be the $i$-panel of $c$. 
Let  also $d, d' \in X_i(c)$ and let $d = d_0, d_1, \dots, d_n = d'$ be a minimal gallery joining them. 

Assume that the gallery is not entirely contained in $X_i(c)$. Let $j $ be the minimal index such that $d_{j+1} \not \in X_i(c)$, and let $j'  $ be the maximal index such that $d_{j'-1} \not \in X_i(c)$. Thus $j' > j$. 

By Lemma~\ref{lem:ParallPanels}, the panel  $\sigma_j$ shared by $d_j$ and $d_{j+1}$ is parallel to $\sigma$. Similarly, so is the panel $\sigma_{j'}$ shared by $d_{j'}$ and $d_{j'-1}$. Therefore, by Proposition~\ref{prop:CriterionParallelRAB}, the set $\Ch(\sigma) \cup \Ch(\sigma_j) \cup \Ch(\sigma_{j'})$ is contained in $\Ch(\overline \sigma)$ (where as above $\overline \sigma$ denotes the $(i \cup i^\perp)$-residue containing $\sigma$). 

For each $k$ between $j$ and $j'$, we now set $d'_k = \proj_{\Res_{i^\perp}(c)}(d_k)$. Notice that by  Lemma~\ref{lem:BasicWings}(iii), we have $\Res_{i^\perp}(c) = \Res_{i^\perp}(d_j) = \Res_{i^\perp}(d_{j'})$. We infer that $d'_{j+1} = d'_j$ and $d'_{j'-1} = d'_{j'}$. Therefore   the sequence 
$$d_j = d'_j = d'_{j+1}, d'_{j+2}, \dots, d'_{j'-2}, d'_{j'-1} = d'_{j'} = d_{j'}$$
is a gallery that is strictly shorter than the given minimal gallery $d_j, d_{j+1}, \dots, d_{j'}$. This  is absurd. 
\end{proof}

By definition of the projection, the set $\Ch(X)$ is the disjoint union of the wings $X_J(d)$ over all $d \in \Ch(\Res_J(c))$. It thus follows from Proposition~\ref{prop:WingConvex} that any residue containing $q$ chambers yields a partition of the building into $q$ convex subsets. 

\medskip
For the sake of future references, we record the following fact. 

\begin{lem}\label{lem:concat}
Let $i \in I$, let $c \in \Ch(X)$ and let $\sigma= \Res_i(c)$.

For any $x \in X_i(c)$ and $x' \not \in X_i(c)$, the gallery from $x$ to $x'$ obtained by concatenating a minimal gallery from $x$ to $\proj_{\overline \sigma}(x)$, a minimal gallery from $\proj_{\overline \sigma}(x)$ to $\proj_{\overline \sigma}(x')$, and a minimal gallery from $\proj_{\overline \sigma}(x')$ to $x'$, is minimal. 
\end{lem}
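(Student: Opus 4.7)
My plan is to set $y = \proj_{\overline\sigma}(x)$ and $y' = \proj_{\overline\sigma}(x')$, write $L$ for the length of the concatenated gallery, and prove that $L = \dist(x, x')$. One direction, $L \geq \dist(x, x')$, is immediate from the triangle inequality, so the work is to show the reverse inequality by analyzing how any minimal gallery $x = d_0, d_1, \ldots, d_n = x'$ of length $n = \dist(x, x')$ interacts with the wall-residue $\overline\sigma$.

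The first step is to observe that such a minimal gallery must meet $\Ch(\overline\sigma)$. Since $\proj_\sigma(x) = c$ while $\proj_\sigma(x') \neq c$, there exists an index $k$ at which $\proj_\sigma(d_k) = c \neq \proj_\sigma(d_{k+1})$; the panel shared by $d_k$ and $d_{k+1}$ then has two chambers with distinct projections on $\sigma$, so by Lemma~\ref{lem:ParallPanels} it is parallel to $\sigma$, and by Proposition~\ref{prop:CriterionParallelRAB}(ii) it lies inside $\overline\sigma$. Consequently $d_k, d_{k+1} \in \Ch(\overline\sigma)$. I would then invoke the convexity of the residue $\overline\sigma$ together with the fact that sub-galleries of minimal galleries are minimal, to conclude that the set of indices $l$ with $d_l \in \Ch(\overline\sigma)$ forms a contiguous interval $[a, b]$ containing $k$ and $k+1$.

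With these two observations in hand, the rest is a short calculation using the standard projection formula for buildings. Because $d_a, d_b \in \overline\sigma$, we have $\dist(x, d_a) = \dist(x, y) + \dist(y, d_a)$ and $\dist(x', d_b) = \dist(x', y') + \dist(y', d_b)$; minimality of the various sub-galleries gives $\dist(x, d_a) = a$, $\dist(d_a, d_b) = b - a$, and $\dist(d_b, x') = n - b$. Substituting these into the triangle inequality $\dist(y, y') \leq \dist(y, d_a) + \dist(d_a, d_b) + \dist(d_b, y')$ will yield
\[
\dist(x, y) + \dist(y, y') + \dist(y', x') \leq n = \dist(x, x'),
\]
so the concatenated gallery has length equal to $\dist(x, x')$ and is therefore minimal.

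The main obstacle is the first key observation, that every minimal gallery from $x$ to $x'$ is forced to visit $\overline\sigma$. This is where the right-angled hypothesis enters in an essential way, through the combination of Lemma~\ref{lem:ParallPanels} (which converts a jump in $\proj_\sigma$ along the gallery into a panel parallel to $\sigma$) and Proposition~\ref{prop:CriterionParallelRAB}(ii) (which places such a parallel panel inside $\overline\sigma$). Once it is known that the gallery enters and leaves $\overline\sigma$ in a single contiguous block, the remainder of the argument is essentially a formal manipulation of the projection identity.
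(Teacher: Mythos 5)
Your proposal is correct and follows essentially the same route as the paper: the key step in both is that any minimal gallery from $x$ to $x'$ must cross a panel parallel to $\sigma$ (Lemma~\ref{lem:ParallPanels}) and hence meet $\overline\sigma$ (Proposition~\ref{prop:CriterionParallelRAB}), after which the conclusion is a routine application of the gate property of $\proj_{\overline\sigma}$. The only cosmetic difference is that the paper finishes by re-routing the initial and final segments to produce a minimal gallery through both projections, while you finish by the equivalent distance computation (and your contiguous-interval observation, though true by convexity of residues, is not actually needed).
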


\begin{proof}
A gallery is minimal if and only if its length equals the numerical distance between its extremities.  Therefore, it suffices to show that there is some minimal gallery from $x$ to $x'$ passing through  $\proj_{\overline \sigma}(x)$ and $\proj_{\overline \sigma}(x')$.

Let $\gamma= (x = x_0, x_1, \dots, x_n = x')$ be a minimal gallery from $x$ to $x'$. Since $x'  \not \in X_i(c)$, the gallery $\gamma$ must cross some panel which is parallel to $\sigma$. By Proposition~\ref{prop:CriterionParallelRAB}, this implies that the gallery $\gamma$ meets the residue $\overline \sigma$. 

Let $j$ (resp. $j'$) be the minimal (resp. maximal) index $k$ such that the chamber $x_k$ of $\gamma$ belongs to $\Ch(\overline \sigma)$. Then there is a minimal gallery $\gamma_j$ from $x$ to $x_j$ (resp. $\gamma_{j'}$ from $x_{j'}$ to $x'$) passing through $\proj_{\overline \sigma}(x)$ (resp. $\proj_{\overline \sigma}(x')$). By concatenating $\gamma_j$ and $\gamma_{j'}$ with the gallery $x_j, x_{j+1}, \dots, x_{j'}$, we obtain a gallery $\tilde \gamma$, of the same length as $\gamma$, and joining $x$ to $x'$. Thus $\tilde \gamma$ is minimal. By construction, it passes through $\proj_{\overline \sigma}(x)$ and $\proj_{\overline \sigma}(x')$.
\end{proof}

Remark that if $\Sigma$ is an apartment of $X$ containing a chamber $c$, then the intersection $X_i(c) \cap \Sigma$ is a half-apartment. The set of half-apartments is partially ordered by inclusion; the following result shows that this order relation is reflected by the ordering of the wings in the ambient building. This will play a crucial role in the subsequent discussions. 

\begin{lem}\label{lem:inclusion}
Let $i, i'\in I$ and  $c, c' \in \Ch(X)$. Suppose that at least one of the following conditions holds. 
\begin{enumerate}[(a)]
\item  $c \in X_{i'}(c')$ and $c' \not \in X_{i}(c)$; moreover $i = i'$ or  $m_{i, i'} = \infty$. 

\item $X_i(c)  \cap \Sigma \subseteq X_{i'}(c')  \cap \Sigma$, where $\Sigma$ is an apartment containing $c$ and $c'$.

\end{enumerate}
Then  $X_i(c) \subseteq X_{i'}(c')$. 
\end{lem}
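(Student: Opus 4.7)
The plan is to reduce condition (a) to condition (b), then deduce the global inclusion from (b) by induction on the gallery distance from $c$.

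For (a)~$\Rightarrow$~(b), pick any apartment $\Sigma$ containing both $c$ and $c'$, and let $\mathcal{W}$ and $\mathcal{W}'$ be the walls of $\sigma = \Res_i(c)$ and $\sigma' = \Res_{i'}(c')$ in $\Sigma$. Under the hypothesis $i = i'$ or $m_{i, i'} = \infty$, the two walls cannot cross in the Coxeter complex $\Sigma$: in a right-angled Coxeter complex, two walls of the same type are always disjoint, and two walls of types with $m_{i, i'} = \infty$ never cross. Writing $H = X_i(c) \cap \Sigma$ and $H' = X_{i'}(c') \cap \Sigma$, the assumptions $c \in X_{i'}(c')$ and $c' \notin X_i(c)$ translate to $c \in H'$ and $c' \notin H$; combined with the fact that $c'$ is adjacent to $\mathcal{W}'$ and the non-crossing of the walls, this forces $\mathcal{W}'$ to lie entirely in the half-apartment opposite to $H$, whence $H \subseteq H'$, which is (b).

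For (b)~$\Rightarrow X_i(c) \subseteq X_{i'}(c')$, induct on $n = d(c, x)$ for $x \in X_i(c)$. The base case $n = 0$ is clear from $c \in H \subseteq H'$. For the inductive step, Proposition~\ref{prop:WingConvex} supplies a chamber $y \in X_i(c)$ adjacent to $x$ with $d(c, y) = n - 1$, and the inductive hypothesis yields $y \in X_{i'}(c')$. Let $\tau$ denote the $j$-panel shared by $x$ and $y$. By Lemma~\ref{lem:ParallPanels}, $\proj_{\sigma'}(x) = \proj_{\sigma'}(y) = c'$ unless $\tau \parallel \sigma'$, which by Proposition~\ref{prop:CriterionParallelRAB}(i) would force $j = i'$. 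So when $j \neq i'$ the step is complete, and it remains only to rule out $j = i'$ with $\tau \parallel \sigma'$.

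Suppose $\tau \parallel \sigma'$; then $x, y \in \tau \subseteq \overline{\sigma'}$. If $\overline{\sigma} = \overline{\sigma'}$, then $\sigma \parallel \sigma'$ and the transitivity of parallelism (Corollary~\ref{cor:ParallelEquivRel}) gives $\tau \parallel \sigma$; but then the bijection $\proj_\sigma \colon \tau \to \sigma$ would send the distinct chambers $x, y$ both to $c$, a contradiction. If $\overline{\sigma} \neq \overline{\sigma'}$, let $c''$ be the chamber of $\sigma' \cap \Sigma$ distinct from $c'$: it belongs to $\overline{\sigma'}$ and lies outside $H'$, so by (b) also outside $H$, whence $c'' \notin X_i(c)$. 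The residue $\overline{\sigma'}$ therefore meets both $X_i(c)$ (through $y$) and its complement (through $c''$); hence $\proj_\sigma(\overline{\sigma'})$ cannot be a single chamber and must equal $\sigma$. Lemma~\ref{lem:ParallelProj} then exhibits an $i$-panel $\sigma'' \subseteq \overline{\sigma} \cap \overline{\sigma'}$ parallel to $\sigma$, and a type analysis on the wall-residues $\overline{\sigma}, \overline{\sigma'}$ (using that residues of the same type sharing a chamber must coincide, while the $\{i, i'\}$-residue $\Res_{\{i, i'\}}(\sigma'')$ would cause $\mathcal{W}$ and $\mathcal{W}'$ to cross in an apartment, conflicting with the non-crossing forced by $H \subseteq H'$) supplies the final contradiction. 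The main obstacle is exactly closing off this last sub-case: controlling the geometry when $\overline{\sigma}$ and $\overline{\sigma'}$ share an $i$-panel without coinciding, which requires combining the apartment-level constraint from (b) with the type structure of right-angled wall-residues.
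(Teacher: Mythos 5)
Your reduction of (a) to (b) is sound and matches the paper's first step (walls of type $i$, or of types $i,i'$ with $m_{i,i'}=\infty$, cannot cross in a right-angled apartment, so the half-apartments nest). The problem is the second half. Your induction on $\dist(c,x)$ works smoothly only as long as the panel $\tau$ crossed at the last step is not parallel to $\sigma'$, and ruling out $\tau \parallel \sigma'$ is exactly the content of the lemma under hypothesis (b); this is the step you do not actually close. In your sub-case $\overline{\sigma}=\overline{\sigma'}$ you assert $\sigma \parallel \sigma'$, but parallel residues have the same type (Proposition~\ref{prop:CriterionParallelRAB}(i)), and under (b) alone nothing forces $i=i'$ (one can have $m_{i,i'}=2$ with nested half-apartments), so that assertion is unjustified as stated; it can be repaired, but only by a separate crossing-walls argument. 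More seriously, in the sub-case $\overline{\sigma}\neq\overline{\sigma'}$ (which, after the easy elimination of $i=i'$, forces $m_{i,i'}=2$) your claimed contradiction -- that the $i$-panel $\sigma''=\proj_{\overline{\sigma'}}(\sigma)$ ``would cause $\mathcal W$ and $\mathcal W'$ to cross in an apartment'' -- is not established: $\sigma''$ need not meet $\Sigma$ at all, wall-crossing is a statement inside a specific apartment, and hypothesis (b) only constrains the apartment $\Sigma$. You acknowledge yourself that closing this sub-case is ``the main obstacle'', so the proof has a genuine gap precisely at its crux.

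For comparison, the paper avoids this case analysis altogether. Assuming (b) (and WLOG $c'\notin X_i(c)$), it fixes $d$ with $\proj_\sigma(d)=c$ and shows directly that $\dist(d,c'')=\dist(d,c')+1$ for every $c''\in\Ch(\sigma')\setminus\{c'\}$, which is exactly $\proj_{\sigma'}(d)=c'$. The two key ingredients are: (1) by (b) and convexity of $\Sigma$, both chambers of $\sigma'\cap\Sigma$, and hence all of $\Ch(\sigma')$, have the same projection $x$ onto $\overline{\sigma}=\Res_{i\cup i^\perp}(c)$; and (2) Lemma~\ref{lem:concat}, applied twice (to the pairs $(d,c')$, $(d,c'')$ and to $(c,c'')$), which turns the distances into sums passing through $\proj_{\overline{\sigma}}$ and reduces everything to $\dist(x,c'')=\dist(x,c')+1$. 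If you want to salvage your inductive scheme, the statement you must prove is that no panel parallel to $\sigma'$ is entirely contained in $X_i(c)$, and the cleanest route to that is essentially the paper's projection/concatenation computation rather than a wall-configuration argument inside $\Sigma$.
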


\begin{proof}
Assume first that (a) holds and let $\Sigma$ be an apartment containing $c$ and $c'$. Let $\mathcal W$  (resp. $\mathcal W'$) be the wall of $\Sigma$ which bounds the half-apartment $X_i(c)  \cap \Sigma$ (resp. $X_{i'}(c')  \cap \Sigma$). The fact that   $i = i'$ or  $m_{i, i'} = \infty$ ensures that the walls $\mathcal W$ and $\mathcal W'$ have trivial intersection (the case $\mathcal W = \mathcal W'$ is excluded in view of Lemma~\ref{lem:BasicWings}(ii)). Therefore the wall $\mathcal W$ is contained in the half-apartment $X_{i'}(c') \cap \Sigma$ because   $c \in X_{i'}(c') \cap \Sigma$. It follows that either $X_i(c)  \cap \Sigma$ or the complementary half-apartment is contained in $X_{i'}(c') \cap \Sigma$. The latter case is excluded, since it would imply that $c' \in X_i(c)  \cap \Sigma$. This proves that (b) holds. 
Hence it suffices to prove the lemma under the hypothesis (b). 

We may assume that $c' \not \in X_i(c)$, since otherwise $X_i(c) \cap \Sigma = X_{i'}(c') \cap \Sigma$, and hence $X_i(c) = X_{i'}(c')$ by Lemma~\ref{lem:BasicWings}(ii). 

Let $\sigma$ (resp. $\sigma'$) be the $i$-panel (resp. $i'$-panel) of $c$ (resp. $c'$). Let $d \in \Ch(X)$ with  $\proj_\sigma(d) = c$. We need to show that  $\proj_{\sigma'}(d) = c'$. Equivalently, for each chamber $c'' \in \Ch(\sigma')$  be different from $c'$, we need to show  that  $\dist(d, c'') = \dist(d, c')+1$. 
Let $\overline \sigma = \Res_{i \cup i^\perp}(c)$.  Let $x = \proj_{\overline \sigma}(c')$ and $y = \proj_{\overline \sigma}(d)$.
By hypothesis (b), and since apartments are convex, both chambers belonging to  $\Sigma \cap \Ch(\sigma')$ have the same projection on $\overline \sigma$, namely $x$. Therefore $\proj_{\overline \sigma}(\sigma') = x$ and, in particular, $\proj_{\overline \sigma}(c'') = x$. 

By assumption, we have $d \in X_i(c)$ and $c' \not \in X_i(c)$. Therefore, Lemma~\ref{lem:concat} implies that  
$$\dist(d, c') = \dist(d, y) + \dist(y, x) + \dist(x, c').$$ 
Moreover, by the claim, we also have 
$$\dist(d, c'') = \dist(d, y) + \dist(y, x) + \dist(x, c'').$$ 
So it suffices to show that $\dist(x, c'') = \dist(x, c')+1$. But Lemma~\ref{lem:concat} applied to $c$ and $c''$ also implies that  
$$\begin{array}{rcl}
\dist(c, x) + \dist(x, c'') & = & \dist(c, c'') \\
& = & \dist(c, c') +1 \\
& = & \dist(c, x) + \dist(x, c') +1,
\end{array}$$ 
whence  $\dist(x, c'') = \dist(x, c') +1$, as   desired. 
\end{proof}

We also need to analyse when a ball or a residue is contained in a given wing. This is the purpose of the next result, whose statements require the following notation. 
We denote by $B(R, n)$   the ball of radius $n$ around $\Ch(R)$, i.e. the collection of all chambers $c$  such that $\dist(c, R) \leq n$. 

\begin{lem}\label{lem:balls:bis}
Let $R$ be a residue, let $i \in I$ and let  $\overline \sigma$ be a  residue of type $i \cup i^\perp$.  Let $R' = \proj_{\overline \sigma}(R)$, let $c \in \Ch(R')$ and $n = \dist(c, R)$. Assume that $\Ch(R') \subseteq X_i(c)$. Then:
\begin{enumerate}[(i)]
\item $B(R, n) \subseteq X_i(c)$.

\item $B(R, n+1) \subseteq X_i(c) \cup \big(\bigcup_{z \in \Ch(R')} \Ch(\Res_i(z))\big)$.
\end{enumerate}

\end{lem}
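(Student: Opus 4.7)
The plan is to combine the distance decomposition provided by Lemma~\ref{lem:concat} with a parallelism argument to control one of the three resulting summands, so that both (i) and (ii) drop out of a single distance inequality, the former as a strict inequality and the latter as an equality case.

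As preliminary setup, I put $R'' = \proj_R(\overline \sigma)$ and write $\sigma$ for $\Res_i(c)$. By Lemma~\ref{lem:ParallelProj}, $R'$ and $R''$ are parallel. For $z \in \Ch(R') \subseteq \Ch(\overline \sigma)$, the chamber $\proj_R(z)$ lies in $\Ch(R'')$, so $\dist(z, R) = \dist(z, R'')$; by Lemma~\ref{lem:ConstantDistance} this quantity is constant in $z$, and taking $z = c$ yields $\dist(R', R'') = n$. Hence $\dist(R, R') = n$ and $\dist(r, R') \geq n$ for every $r \in \Ch(R)$. A direct application of Lemma~\ref{lem:NestedProj} shows moreover that $\Ch(R) \subseteq X_i(c)$: indeed $\proj_\sigma(r) = \proj_\sigma(\proj_{\overline \sigma}(r))$, and $\proj_{\overline \sigma}(r) \in \Ch(R') \subseteq X_i(c)$ forces this to equal $c$.

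For (i), suppose for contradiction that $y \in B(R, n)$ with $y \notin X_i(c)$, and pick $r \in \Ch(R)$ with $\dist(r, y) \leq n$. Since $r \in X_i(c)$ and $y \notin X_i(c)$, Lemma~\ref{lem:concat} gives
\[
\dist(r, y) = \dist(r, \proj_{\overline \sigma}(r)) + \dist(\proj_{\overline \sigma}(r), \proj_{\overline \sigma}(y)) + \dist(\proj_{\overline \sigma}(y), y).
\]
The first summand equals $\dist(r, R') \geq n$ since $\proj_{\overline \sigma}(r) \in \Ch(R')$. By Lemma~\ref{lem:BasicWings}(iii), $X_i(c) \cap \Ch(\overline \sigma) = \Ch(\Res_{i^\perp}(c))$, so $\proj_{\overline \sigma}(r)$ lies in $\Res_{i^\perp}(c)$ while $\proj_{\overline \sigma}(y)$ does not; the middle summand is therefore $\geq 1$. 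Hence $\dist(r, y) \geq n + 1$, a contradiction.

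For (ii), the same decomposition yields $\dist(r, y) \geq n + 1$. For $y \in B(R, n+1) \setminus X_i(c)$ and $r \in \Ch(R)$ realising $\dist(r, y) \leq n + 1$, equality must hold throughout, forcing the last summand to be zero (so $y = \proj_{\overline \sigma}(y) \in \Ch(\overline \sigma)$) and the middle summand to equal $1$ (so $y$ is adjacent in $\overline \sigma$ to $z := \proj_{\overline \sigma}(r) \in \Ch(R')$). The panel shared by $y$ and $z$ cannot have type in $i^\perp$, lest $y$ and $z$ lie in a common $i^\perp$-residue of $\overline \sigma$---which is impossible since $z \in X_i(c)$ and $y \notin X_i(c)$. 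Consequently the panel has type $i$, so $y \in \Ch(\Res_i(z))$ with $z \in \Ch(R')$. The only mildly delicate point is the identification $\dist(R, R') = n$ from the parallelism of $R'$ and $R''$; once that is in hand, both assertions follow by inspecting the equality case of the tripartite distance bound.
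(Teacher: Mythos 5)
Your proof is correct. It shares with the paper's argument the one essential quantitative input---namely that $\proj_R(\overline\sigma)$ and $R'$ are parallel (Lemma~\ref{lem:ParallelProj}), so that by Lemma~\ref{lem:ConstantDistance} every chamber of $R'$ is at distance exactly $n$ from $R$---but the surrounding structure differs. The paper proves the preliminary claim $\Ch(R)\subseteq X_i(c)$ by contradiction, producing a panel of $R$ parallel to $\Res_i(c)$ and invoking Lemma~\ref{lem:ParallPanels} and Proposition~\ref{prop:CriterionParallelRAB}; you get it directly from the gate identity $\proj_\sigma(r)=\proj_\sigma(\proj_{\overline\sigma}(r))$, which is shorter. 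For the main step the paper runs a minimal gallery from $\proj_R(y)$ to $y$, locates the first chamber leaving the wing, identifies the exit panel as an $i$-panel of $\overline\sigma$ via the parallel-panel machinery, and then squeezes the length estimates; you instead invoke Lemma~\ref{lem:concat} to obtain the three-term decomposition $\dist(r,y)=\dist(r,\proj_{\overline\sigma}(r))+\dist(\proj_{\overline\sigma}(r),\proj_{\overline\sigma}(y))+\dist(\proj_{\overline\sigma}(y),y)$, bound the summands from below by $n$, $1$, $0$, and read (i) off the strict inequality and (ii) off the equality case, ruling out a panel of type in $i^\perp$ by Lemma~\ref{lem:BasicWings}(iii). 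The two routes are close in substance (Lemma~\ref{lem:concat} is itself proved by a gallery argument), but yours trades the explicit gallery bookkeeping for an equality-case analysis and avoids parallelism of panels both in the preliminary claim and in identifying the type of the panel containing $y$. The only point you leave implicit is that $\proj_{\overline\sigma}(y)\notin X_i(c)$, hence $\proj_{\overline\sigma}(y)\notin\Ch(\Res_{i^\perp}(c))$, which follows at once from the same gate-property identity you already used for $r$; with that sentence added the argument is complete.
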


\begin{proof}
We first claim that $\Ch(R) \subseteq X_i(c)$. Notice that $\Ch(R)$ contains at least one chamber in $X_i(c)$, namely a chamber $x \in \Ch(R)$ such that $\proj_{\overline \sigma}(x) = c$. Therefore, if  $\Ch(R) \not \subseteq X_i(c)$, then  $R$ would contain a panel $\tau$ parallel to the $i$-panel of $c$ by Lemma~\ref{lem:ParallPanels}. Therefore $R' = \proj_{\overline \sigma}(R)$ would contained $\proj_{\overline \sigma}(\tau)$, which is also parallel to the $i$-panel of $c$ by Lemma~\ref{lem:NestedProj}. Notice that $\proj_{\overline \sigma}(\tau)$ is an $i$-panel by Proposition~\ref{prop:CriterionParallelRAB}(i).  Therefore $\Ch(R')$ is not contained in $\Res_{i^\perp}(c)$. By Lemma~\ref{lem:BasicWings}(iii), this implies that $\Ch(R') \not \subseteq X_i(c)$, contradicting the hypothesis. The claim stands proven.  

Choose a chamber $y \in B(R, n+1) - X_i(c)$. 
Let $x = \proj_R(y)$ and let $x= x_0, x_1, \dots, x_{m} =y$ be a minimal gallery. Hence $m = \dist(x, y) = \dist(R, y) \leq n+1$. 
By the claim above, we have $x \in X_i(c)$. On the other hand $y \not \in X_i(c)$ by assumption, so that it makes sense to define  $k_0 = \min \{ \ell \; | \; x_\ell \not \in X_i(c)\}$.
Thus $k_0 >0$ and $x_s \in X_i(c)$ for all $s \in \{0, \dots, k_0-1\}$.

We next observe that the panel  $\sigma'$ shared by $x_{k_0-1}$ and $x_{k_0}$ is parallel to $\sigma = \Res_i(c)$ by Lemma~\ref{lem:ParallPanels}, and is thus of type $i$ by Proposition~\ref{prop:CriterionParallelRAB}(i). Moreover  $x_{k_0-1}$ and $x_{k_0}$ both belong to $ \Ch(\overline \sigma)$ by Proposition~\ref{prop:CriterionParallelRAB}(ii).  
 In particular we have
$$n \geq m-1 \geq k_0-1 = \dist(x, x_{k_0-1})  \geq \dist(x, \proj_{\overline \sigma}(x)).$$
There is a minimal gallery from $x$ to $\proj_{\overline \sigma}(x)$ passing through $x' = \proj_R(\proj_{\overline \sigma}(x)))$. The residues  $\proj_R(\overline \sigma)$ and $R'$ are parallel by Lemma~\ref{lem:ParallelProj}.  
Therefore, we deduce from Lemma~\ref{lem:ConstantDistance} that  
$$\dist(x', \proj_{\overline \sigma}(x)) =  \dist(x', \proj_{\overline \sigma}(x'))=\dist( \proj_R(\overline \sigma), R') = \dist( R, c)=n.$$ 
This implies that $\dist(x, \proj_{\overline \sigma}(x)) \geq n$. From the sequence of inequalities above, we deduce  that  $m = k_0 = n+1$. Part (i) follows.  

Moreover, since $n = \dist(x, x_{k_0-1} ) \geq  \dist(x, \proj_{\overline \sigma}(x)) \geq n$, we have $x_{k_0 -1} = \proj_{\overline \sigma}(x)$ and hence $x_{k_0 - 1} \in R'$.  Thus $y \in \Ch(\sigma') \subseteq 
\big(\bigcup_{z \in \Ch(R')} \Ch(\Res_i(z))\big)$. This proves (ii).
\end{proof}

\begin{cor}\label{cor:balls}
Let $i \in I$, let $c,x \in \Ch(X)$ and  $n = \dist(c,x)$. Let also $\sigma = \Res_i(c)$ and $\overline \sigma = \Res_{i \cup i^\perp}(c)$. 
If $\proj_{\overline \sigma}(x) = c$, then   $B(x, n+1) \subseteq  X_i(c) \cup \Ch(\sigma) $. 
\end{cor}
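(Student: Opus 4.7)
The plan is to deduce this corollary as a direct specialisation of Lemma~\ref{lem:balls:bis} by taking the residue $R$ to be the rank-zero residue $\{x\}$ consisting of the single chamber $x$. This is a legitimate residue (the $\emptyset$-residue of $x$), so Lemma~\ref{lem:balls:bis} applies verbatim, and the ball $B(R, m) = B(x, m)$ coincides with the ball around $x$ in the usual sense for every radius $m$.

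First I would compute $R' = \proj_{\overline{\sigma}}(R) = \proj_{\overline{\sigma}}(\{x\}) = \{c\}$, which is exactly the hypothesis $\proj_{\overline{\sigma}}(x) = c$. The quantity $\dist(c, R)$ in the lemma becomes $\dist(c, x) = n$, matching the hypotheses there. Finally, the hypothesis $\Ch(R') \subseteq X_i(c)$ reduces to $c \in X_i(c)$, which holds trivially since $\proj_{\sigma}(c) = c$.

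With all hypotheses of Lemma~\ref{lem:balls:bis} verified, I would simply invoke part (ii):
$$B(x, n+1) = B(R, n+1) \subseteq X_i(c) \cup \Bigl(\bigcup_{z \in \Ch(R')} \Ch(\Res_i(z))\Bigr).$$
Since $\Ch(R') = \{c\}$, the second term collapses to $\Ch(\Res_i(c)) = \Ch(\sigma)$, yielding the desired inclusion $B(x, n+1) \subseteq X_i(c) \cup \Ch(\sigma)$.

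There is essentially no obstacle here beyond recognising the right specialisation; the substantive combinatorial work (controlling how a minimal gallery from a chamber in $X_i(c)$ to a chamber outside it must pass through $\overline{\sigma}$ via the wall-panel) has already been carried out in Lemma~\ref{lem:balls:bis}, and in turn rests on Lemma~\ref{lem:concat} together with the parallelism criterion of Proposition~\ref{prop:CriterionParallelRAB}.
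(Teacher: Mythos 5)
Your proposal is correct and is exactly the paper's argument: the paper also applies Lemma~\ref{lem:balls:bis} with $R = \{x\}$, noting $\proj_{\overline \sigma}(R) = \{c\} \subseteq X_i(c)$, so that part (ii) gives the stated inclusion. Your verification of the hypotheses (rank-zero residue, $R' = \{c\}$, $\dist(c,R) = n$) is accurate and complete.
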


\begin{proof}
Let $R = \{x\}$. Then $ \proj_{\overline \sigma}(R) = \{c\} \subseteq X_i(c)$. Thus the desired inclusion follows from Lemma~\ref{lem:balls:bis}.
\end{proof}

\begin{cor}\label{cor:ResidueWing}
Let $J \subseteq I$ and $i \in I - J$. Given a $J$-residue $R$ and a chamber $c \in \Ch(R)$, we have $\Ch(R) \subseteq X_i(c)$. 
\end{cor}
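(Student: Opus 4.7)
The plan is to deduce this corollary directly from Lemma~\ref{lem:balls:bis}(i) by choosing the correct residue of type $i \cup i^\perp$ and verifying its hypotheses. Set $\overline{\sigma} = \Res_{i \cup i^\perp}(c)$ and $R' = \proj_{\overline{\sigma}}(R)$. Because $c$ belongs to both $R$ and $\overline{\sigma}$, it is fixed by $\proj_{\overline{\sigma}}$ and therefore $c \in \Ch(R')$; moreover $\dist(c, R) = 0$. Provided that I can show $\Ch(R') \subseteq X_i(c)$, Lemma~\ref{lem:balls:bis}(i) with $n = 0$ will immediately give $\Ch(R) = B(R, 0) \subseteq X_i(c)$, which is what we want.

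The remaining point is to verify $\Ch(R') \subseteq X_i(c)$, and for this I will pin down the type of $R'$. First, $R' \subseteq \overline{\sigma}$, so its type is contained in $i \cup i^\perp$. On the other hand, by Lemma~\ref{lem:ParallelProj} the residues $R' = \proj_{\overline{\sigma}}(R)$ and $\proj_R(\overline{\sigma})$ are parallel, hence share the same type by Proposition~\ref{prop:CriterionParallelRAB}(i); since $\proj_R(\overline{\sigma}) \subseteq R$, this type is a subset of $J$. Combining both constraints and using the hypothesis $i \notin J$, the type of $R'$ lies in $J \cap (i \cup i^\perp) \subseteq i^\perp$. In particular $R'$ is contained in $\Res_{i^\perp}(c)$, which by Lemma~\ref{lem:BasicWings}(iii) is contained in $X_i(c)$.

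With $\Ch(R') \subseteq X_i(c)$ and $n = \dist(c, R) = 0$ in hand, Lemma~\ref{lem:balls:bis}(i) yields $\Ch(R) \subseteq X_i(c)$, completing the argument. The only delicate step is the identification of the type of $R'$, which requires invoking both the parallelism between $\proj_{\overline{\sigma}}(R)$ and $\proj_R(\overline{\sigma})$ and the fact that parallel residues in a right-angled building have the same type; everything else is immediate from the setup.
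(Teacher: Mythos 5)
Your proof is correct and follows essentially the same route as the paper: both reduce the statement to Lemma~\ref{lem:balls:bis}(i) with $n=0$ after verifying $\Ch(R') \subseteq X_i(c)$ for $R' = \proj_{\overline\sigma}(R)$. The only (harmless) difference is how you pin down the type of $R'$: you go through Lemma~\ref{lem:ParallelProj} and Proposition~\ref{prop:CriterionParallelRAB}(i), whereas the paper simply observes that $R' = R \cap \overline\sigma$ because $c$ lies in both residues, so its type is contained in $J$ and hence cannot contain $i$.
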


\begin{proof}
Let $\overline \sigma = \Res_{i \cup i^\perp}(c)$ and $R' = \proj_{\overline \sigma}(R)$. Since $c \in \Ch(R) \cap \Ch(\overline \sigma)$, we have $R' = R \cap \overline \sigma$. 
Recall from Lemma~\ref{lem:BasicWings}(iii) that $X_i(c) \cap \Ch(\overline \sigma) = \Res_{i^\perp}(c)$. Therefore, if $\Ch(R')$ were not contained in $X_i(c)$, it would contain an $i$-panel. However the type of $R'$ is a subset of $J$, and therefore does not contain $i$ by hypothesis. This shows that $\Ch(R') \subseteq X_i(c)$. Applying Lemma~\ref{lem:balls:bis}(i) with $n=0$, we obtain $\Ch(R) \subseteq X_i(c)$, as required.
\end{proof}

\section{Extending local automorphisms}

The following important result was shown by Haglund--Paulin. 

\begin{prop}[Haglund--Paulin]\label{prop:extension}
Let $X$ be a semi-regular right-angled building. For any residue $R$ of $X$ and any $\alpha \in \Aut(R)^+$, there is $\tilde \alpha \in \Aut(X)^+$ stabilising $R$ and such that  $\tilde \alpha |_{\Ch(R)} = \alpha$.
\end{prop}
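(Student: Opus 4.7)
The plan is to construct $\tilde \alpha$ by induction on the combinatorial balls $B(R, n) := \{c \in \Ch(X) : \dist(c, R) \leq n\}$, starting from $\tilde \alpha_0 := \alpha$ on $B(R, 0) = \Ch(R)$ and producing a sequence of type-preserving, adjacency-preserving bijections $\tilde \alpha_n : B(R, n) \to B(R, n)$ that preserve the distance function $\dist(\cdot, R)$ and agree on nested domains, whose union will be the desired automorphism. The freedom to perform the extension at each step comes from the semi-regularity assumption: each $i$-panel has exactly $q_i$ chambers, so partial bijections between source and target panels extending the inductively defined map can always be completed.

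The crucial structural input is a commutativity property of ``predecessors''. For $y \in S_{n+1} := B(R, n+1) \setminus B(R, n)$, set $P(y) := \{z \in S_n : z \text{ adjacent to } y\}$ and $I(y) := \{i \in I : \text{some } z \in P(y) \text{ is $i$-adjacent to } y\}$. The claim is that the elements of $I(y)$ pairwise commute. Indeed, if $z_1, z_2 \in P(y)$ were joined to $y$ via panels of types $i_1, i_2$ with $m_{i_1, i_2} = \infty$, then the rank-$2$ residue $T$ of type $\{i_1, i_2\}$ containing $\{y, z_1, z_2\}$ would be a tree in which $z_1, y, z_2$ form a path of length $2$; a direct analysis of $\dist(\cdot, R)|_T$, exploiting the convexity of $T$ and the structure of $\proj_R|_T$, shows that this function is tree-convex, so $\dist(y, R) \leq \max(\dist(z_1, R), \dist(z_2, R)) = n$, contradicting $y \in S_{n+1}$. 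Granted this commutativity, Lemma~\ref{lem:ProductRes} yields a product decomposition $\Ch(\Res_{I(y)}(y)) = \prod_{i \in I(y)} \Ch(\Res_i(y))$ of the residue containing $\{y\} \cup P(y)$, and likewise for its image under $\tilde\alpha_n$.

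With the structural claim in hand, the extension step proceeds as follows. Call a panel $\tau$ of type $i$ a \emph{frontier panel at level $n$} if $\emptyset \neq \Ch(\tau) \cap B(R, n) \subsetneq \Ch(\tau)$. For each frontier panel $\tau$, fix an arbitrary bijection $\psi_\tau : \Ch(\tau) \to \Ch(\tilde \tau)$ extending $\tilde\alpha_n|_{\Ch(\tau) \cap B(R, n)}$, where $\tilde\tau := \Res_i(\tilde\alpha_n(z))$ for some $z \in \Ch(\tau) \cap B(R, n)$; such a bijection exists by semi-regularity. For $y \in S_{n+1}$, define $\tilde\alpha_{n+1}(y)$ to be the unique chamber of $\Res_{I(y)}(\tilde\alpha_n(z_{i_0}))$ (for any $i_0 \in I(y)$, where $z_{i_0} \in P(y)$ is the unique predecessor $i_0$-adjacent to $y$) whose $i$-projection equals $\psi_{\Res_i(y)}(y)$ for each $i \in I(y)$. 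The product decomposition of the image residue guarantees that this is well-defined (independent of the choice of $i_0$) and that the various constraints coming from distinct frontier panels through $y$ are mutually compatible. Adjacency-preservation across the old/new boundary follows by construction, while adjacency between pairs in $S_{n+1}$ lying in a common non-frontier panel follows from the analogous product decomposition in the image.

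The main obstacle is the structural commutativity claim: it is the only place where the right-angled hypothesis enters in an essential way, via the tree geometry of rank-$2$ residues of infinite type. Once this is established, the remaining steps amount to careful bookkeeping with Lemma~\ref{lem:ProductRes}. Taking the union $\tilde\alpha := \bigcup_n \tilde\alpha_n$ then yields a type-preserving and adjacency-preserving bijection of $\Ch(X)$, hence an element of $\Aut(X)^+$ that stabilises $R$ and restricts to $\alpha$ on $\Ch(R)$, as required.
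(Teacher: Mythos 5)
The paper does not actually prove this statement: it is quoted from Haglund--Paulin and the proof is a citation of \cite[Prop.~5.1]{HP}, so your argument has to stand on its own. Your structural claim is fine as a statement: the set $I(y)$ of types across which $\dist(\cdot,R)$ drops is the right descent set of $w=\delta(\proj_R(y),y)$ (one checks that if $z\sim_i y$ is strictly closer to $R$ then $\proj_R(z)=\proj_R(y)$ and $w=\delta(\proj_R(z),z)r_i$), and descent sets are always spherical, hence pairwise commuting in the right-angled case; your ``tree-convexity'' justification is only a sketch, but the claim itself is true and is not where the difficulty lies.

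The genuine gap is in the extension step: you fix an \emph{arbitrary} bijection $\psi_\tau$ on each frontier panel independently, but an automorphism of a building must intertwine the canonical projections between parallel panels, so these choices are strongly constrained and in many cases already forced by $\tilde\alpha_n$. Concretely, take $y\in S_{n+1}$ with two types $i,j\in I(y)$, $m_{i,j}=2$, gates $z_i,z_j$, and let $d$ be the fourth chamber of the $\{i,j\}$-square ($d\sim_j z_i$, $d\sim_i z_j$); one computes $d\in B(R,n)$, and $y$ is the \emph{unique} chamber $i$-adjacent to $z_i$ and $j$-adjacent to $z_j$, so any adjacency-preserving extension must send $y$ to the unique chamber $i$-adjacent to $\tilde\alpha_n(z_i)$ and $j$-adjacent to $\tilde\alpha_n(z_j)$: there is no freedom at all, and for a generic choice of $\psi_{\Res_i(y)}$ and $\psi_{\Res_j(y)}$ your two constraints single out different chambers (or a chamber of $\Res_{I(y)}(\tilde\alpha_n(z_{i_0}))$ that is not adjacent to $\tilde\alpha_n(z_i)$), so adjacency across the old/new boundary fails. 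Equivalently, a frontier panel $\tau$ is typically parallel (in the sense of Proposition~\ref{prop:CriterionParallelRAB}) to panels already contained in $B(R,n)$ --- e.g.\ $\Res_i(z_j)$ in the example above, or panels of $R$ itself when $\tau$ lies in a wall-residue meeting $R$ --- and then $\psi_\tau$ must be the transport of $\tilde\alpha_n$ through $\proj$, not an arbitrary completion. Your sentence asserting that ``the various constraints coming from distinct frontier panels through $y$ are mutually compatible'' is exactly the non-trivial content of the proposition: the choices have to be organized per parallelism class (per wall-residue), made coherently once and for all, and shown to remain coherent at all later levels. This is where the real work of Haglund--Paulin's proof lies, and it is also how the paper handles the analogous one-panel extension in Proposition~\ref{prop:Panel:extension}, via the decomposition $\Ch(\overline\sigma)=\Ch(\sigma)\times\Ch(\sigma^\perp)$ of Lemma~\ref{lem:ProductRes} and the convexity of wings, rather than by independent panel-by-panel choices.
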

\begin{proof}
See Proposition~5.1 in \cite{HP}.
\end{proof}

In other words, this means that the canonical homomorphism $\Stab_{\Aut(X)^+}(R) \to \Aut(R)^+$ is surjective. 

It will be important for our purposes to ensure that  the extension constructed in Proposition~\ref{prop:extension} can be chosen to satisfy some additional constraints. In particular, we record the following. 

\begin{prop}\label{prop:Panel:extension}
Let $X$ be a semi-regular right-angled building of type $(W, I)$.
Let $i \in I$  and $\sigma$ be an $i$-panel. 

Given any permutation $\alpha \in \Sym(\Ch(\sigma))$, there is $\tilde \alpha \in \Aut(X)^+$ stabilising $\sigma$ satisfying the following two conditions:

\begin{enumerate}[(i)]

\item $\tilde \alpha |_{\Ch(\sigma)} = \alpha$;

\item $\tilde \alpha$ fixes all chambers of $X$ whose projection to $\sigma$ is fixed by $\alpha$.
\end{enumerate}

\end{prop}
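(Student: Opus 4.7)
My plan is to build $\tilde \alpha$ by first extending $\alpha$ to a type-preserving automorphism of the wall-residue $\overline \sigma = \Res_{\{i\} \cup i^\perp}(\sigma)$ via the product structure provided by Lemma~\ref{lem:ProductRes}, then applying Proposition~\ref{prop:extension} to extend to all of $X$, and finally modifying this extension so that it restricts to the identity on every wing lying above a fixed point of $\alpha$.

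First, fix $c_0 \in \Ch(\sigma)$. Lemma~\ref{lem:ProductRes} applied with $J_1 = \{i\}$ and $J_2 = i^\perp$ yields
\[
\Ch(\overline \sigma) = \Ch(\sigma) \times \Ch(\Res_{i^\perp}(c_0)),
\]
where $x \in \Ch(\overline \sigma)$ is identified with $(\proj_\sigma(x), \proj_{\Res_{i^\perp}(c_0)}(x))$. Define $\alpha' \in \Aut(\overline \sigma)^+$ by $\alpha'(p, z) = (\alpha(p), z)$; this is type-preserving because $\overline \sigma$ is a right-angled building of type $\{i\} \cup i^\perp$, and those two sub-types commute. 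By Proposition~\ref{prop:extension}, there exists $\tilde \alpha' \in \Aut(X)^+$ stabilising $\overline \sigma$ with $\tilde \alpha'|_{\Ch(\overline \sigma)} = \alpha'$. Since $\tilde \alpha'$ stabilises $\sigma$ and acts as $\alpha$ on its chambers, it carries each wing $X_i(c)$ bijectively onto $X_i(\alpha(c))$.

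Now let $F = \{c \in \Ch(\sigma) : \alpha(c) = c\}$ and define $\tilde \alpha \colon \Ch(X) \to \Ch(X)$ by
\[
\tilde \alpha(x) = \begin{cases} x & \text{if } \proj_\sigma(x) \in F, \\ \tilde \alpha'(x) & \text{otherwise}. \end{cases}
\]
Since the wings $\{X_i(c)\}_{c \in \Ch(\sigma)}$ partition $\Ch(X)$, this is a well-defined bijection, and conditions (i) and (ii) are immediate from the definition. To verify that $\tilde \alpha$ is a type-preserving automorphism, one checks preservation of $j$-adjacency for every $j \in I$. For adjacent chambers $x, y$ lying in the same wing, $\tilde \alpha$ restricts to either the identity or $\tilde \alpha'$, both of which are type-preserving. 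For adjacent $x \in X_i(c)$ and $y \in X_i(c')$ with $c \neq c'$, Lemma~\ref{lem:ParallPanels} together with Proposition~\ref{prop:CriterionParallelRAB} forces the panel they share to be of type $i$, parallel to $\sigma$, and hence contained in $\overline \sigma$. Writing $x = (c, z)$ and $y = (c', z)$ in the product coordinates, a short case analysis on whether each of $c, c'$ lies in $F$ shows that $\tilde \alpha(x)$ and $\tilde \alpha(y)$ remain two distinct chambers of the same $i$-panel $\{(p, z) : p \in \Ch(\sigma)\}$, hence still $i$-adjacent.

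The crucial structural input, and the only place where the argument uses more than bookkeeping, is the observation that cross-wing adjacencies in a right-angled building occur exclusively through $i$-panels parallel to $\sigma$, all of which lie inside $\overline \sigma$. This compatibility between the wing partition and the product structure of $\overline \sigma$ is precisely what allows the piecewise definition to glue into a genuine type-preserving automorphism.
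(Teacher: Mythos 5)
Your construction is exactly the one in the paper: extend $\alpha$ to $\overline\sigma$ as $\alpha\times\mathrm{Id}$ via Lemma~\ref{lem:ProductRes}, globalise with Proposition~\ref{prop:extension}, and then define $\tilde\alpha$ piecewise on the wings over fixed and non-fixed chambers of $\sigma$. The only (harmless) difference is in the final verification: you check preservation of $j$-adjacency using Lemma~\ref{lem:ParallPanels} and Proposition~\ref{prop:CriterionParallelRAB} to control cross-wing adjacencies, whereas the paper checks preservation of the Weyl-distance directly via Lemma~\ref{lem:concat} and the convexity of wings; both are valid.
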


\begin{proof}
Let $c_0 \in \Ch(\sigma)$ and $\sigma^\perp = \Res_{i^\perp}(c_0)$. Then $\Ch(\overline \sigma) = \Ch(\sigma) \times \Ch(\sigma^\perp)$ by Lemma~\ref{lem:ProductRes}. 
We define $\beta \in \Aut(\overline \sigma)^+$ as $\beta = \alpha \times \mathrm{Id}$. By Proposition~\ref{prop:extension}, the automorphism $\beta$ of $\overline \sigma$ extends to some (type-preserving) automorphism $\tilde \beta$ of $X$.

We now define a map $\tilde \alpha \colon \Ch(X) \to \Ch(X)$  as follows: for each $c \in \Ch(X)$, we set
$$ 
\tilde \alpha(c) = \left\{
\begin{array}{ll}
c & \text{if } \alpha(\proj_\sigma(c))=\proj_\sigma(c);\\
\tilde \beta(c) & \text{otherwise}.
\end{array}
\right.
$$
Clearly the map $\tilde \alpha$ satisfies the desired condition (ii). Moreover, we have $\tilde \alpha  |_{\Ch(\overline \sigma)} = \beta$, from which it follows that condition (i) holds as well. 

It remains to check that $\tilde \alpha$ is an automorphism. 
To this end, let $x$ and $y$ be any two chambers and denote by $x'$ and $y'$ their projections on $\sigma$. 

If $x' = y'$, then we have either $(\tilde \alpha (x), \tilde \alpha(y)) = (x, y)$, or $(\tilde \alpha (x), \tilde \alpha(y)) =(\tilde \beta (x), \tilde \beta(y))$. In both cases, it follows that $\tilde \alpha$ preserves the Weyl-distance from $x$ to $y$. 

Assume now that $x' \neq y'$. Let then $x''$ and $y''$ denote the projections of $x$ and $y$ on $\overline \sigma$. By Lemma~\ref{lem:concat}, it suffices to show that $\tilde \alpha$ preserves the Weyl-distance from $x$ to $x''$, the Weyl-distance from $x''$ to $y''$ and the Weyl-distance from $y''$ to $y$. Since wings are convex by Proposition~\ref{prop:WingConvex}, and since the restriction of $\tilde \alpha$ on each wing of $\sigma$ preserves the Weyl-distance, it follows that  $\tilde \alpha$ preserves the Weyl-distance from $x$ to $x''$ and from $y''$ to $y$. That the Weyl-distance from $x''$ to $y''$ is preserved is clear since the restriction of $\tilde \alpha$ to $\Ch(\overline \sigma)$ is the automorphism $\beta$. 

This proves that $\tilde \alpha$ preserves the Weyl-distance from $x$ to $y$. Thus $\tilde \alpha$ is an automorphism. 
\end{proof}

\section{Fixators of wings}

As before, let $X$ be a right-angled building of type $(W, I)$. 

\medskip
The subsets $X_i(c)$ are analogues of half-trees in the case $W$ is infinite dihedral. In view of this analogy, we shall consider the subgroups of $\Aut(X)^+$ denoted by $V_i(c)$ and $U_i(c)$, consisting respectively  of automorphisms supported on $X_i(c)$ and on its complement. In symbols, this yields 
$$
U_i(c) = \{g \in \Aut(X)^+ \; | \; g(x) = x \ \text{for all } x \in X_i(c)\},
$$
and 
$$
V_i(c) = \{g \in \Aut(X)^+ \; | \; g(x) = x \ \text{for all } x \not \in  X_i(c)\}.
$$
Clearly $U_i(c)$ and  $V_i(c)$ both fix $c$ and stabilise $\sigma$.  Moreover they commute and have trivial intersection, since their supports are disjoint.  The following implies that they are both non-trivial. 

\begin{lem}\label{lem:NonAb}
Assume that $X$ is thick and semi-regular. Let $i \in I$ be such that $i \cup i^\perp \neq I$. Then for all $c \in \Ch(X)$, the groups $U_i(c)$ and $V_i(c)$ are non-abelian. 
\end{lem}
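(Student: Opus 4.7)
By the hypothesis $i \cup i^\perp \neq I$, fix $j \in I$ with $j \neq i$ and $m_{ij} = \infty$. The plan is, for each of $V_i(c)$ and $U_i(c)$, to produce two non-commuting elements by applying Proposition~\ref{prop:Panel:extension} to two carefully chosen panels, where clause (ii) of that proposition will force the extensions into the appropriate fixator subgroup.

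Consider $V_i(c)$. Set $\tau := \Res_j(c)$ (contained in $X_i(c)$ by Corollary~\ref{cor:ResidueWing}), pick $d \in \Ch(\tau) \setminus \{c\}$, and set $\sigma_d := \Res_i(d)$, which is not parallel to $\sigma := \Res_i(c)$ by Proposition~\ref{prop:CriterionParallelRAB}(ii) (since $d \notin \overline\sigma$ because $j \notin i \cup i^\perp$). By thickness we may further fix $d_2 \in \Ch(\tau) \setminus \{c, d\}$ and distinct $d', d'' \in \Ch(\sigma_d) \setminus \{d\}$. The key input is the pair of projection identities
\[
\proj_\tau(x) = c \quad \text{and} \quad \proj_{\sigma_d}(x) = d \qquad \text{for every } x \notin X_i(c),
\]
which I would establish by reducing via Lemma~\ref{lem:NestedProj} to the rank-$2$ residue $R := \Res_{\{i,j\}}(c)$, a thick building of type $D_\infty$. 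Inside $R$, every Weyl distance is represented by an alternating reduced word in $s_i, s_j$, and the condition $x \notin X_i(c)$ forces the word from $\proj_R(x)$ to $c$ to end in $s_i$, so appending $s_j$ or $s_j s_i$ keeps it reduced, and both identities drop out.

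Define $\alpha := (d\; d_2) \in \Sym(\Ch(\tau))$ and $\gamma := (d'\; d'') \in \Sym(\Ch(\sigma_d))$; note that $\alpha$ fixes $c$ and $\gamma$ fixes $d$. Proposition~\ref{prop:Panel:extension} supplies extensions $\tilde \alpha, \tilde \gamma \in \Aut(X)^+$, and combining clause (ii) with the above projection identities places both in $V_i(c)$. To verify non-commutation, evaluate at $d'$: since $\tilde \alpha(c) = c$ the automorphism $\tilde \alpha$ preserves $R$ and maps $\sigma_d$ to $\Res_i(d_2)$, which is again not parallel to $\sigma_d$ by the same criterion; hence $\tilde \alpha(d') \in \Res_i(d_2) \setminus \{d_2\}$, and a further non-parallel-panels computation (using Lemma~\ref{lem:ParallPanels}) gives $\proj_{\sigma_d}(\tilde\alpha(d')) = d$, which $\gamma$ fixes. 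Consequently $\tilde\gamma\tilde\alpha(d') = \tilde\alpha(d')$, whereas $\tilde\alpha\tilde\gamma(d') = \tilde\alpha(d'')$; these two values are distinct by injectivity of $\tilde\alpha$. The analogous argument -- with the roles of $\tau, \sigma_d$ played by $\sigma$ and $\tau_1 := \Res_j(c_1)$ for some $c_1 \in \Ch(\sigma) \setminus \{c\}$, and the dual projection identities $\proj_\sigma(x) = c$ (immediate from the definition of $X_i(c)$) and $\proj_{\tau_1}(x) = c_1$ for $x \in X_i(c)$ (proved by the same $D_\infty$ computation) -- yields that $U_i(c)$ is also non-abelian.

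The main obstacle is the rigorous verification of the projection identities: once the rank-$2$ reduction to $R$ and the alternating-word structure of the infinite-dihedral residue are in hand, everything else is a mechanical combination of Proposition~\ref{prop:Panel:extension} with injectivity of the constructed extensions.
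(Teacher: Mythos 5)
Your proposal is correct; I could not find a gap. The projection identities you state do hold: reducing via Lemma~\ref{lem:NestedProj} to the $\{i,j\}$-residue, which is a building of type $D_\infty$ with unique (alternating) reduced words, the condition $x \notin X_i(c)$ means $\delta(\proj_R(x),c)$ ends in $s_i$, hence cannot end in $s_j$, giving $\proj_\tau(x)=c$, and appending $s_j$ gives $\proj_{\sigma_d}(x)=d$; the dual identities for $x \in X_i(c)$ are checked the same way. With Proposition~\ref{prop:Panel:extension}(ii) this puts your two extensions in $V_i(c)$ (resp.\ $U_i(c)$), and the evaluation at $d'$, using that $\Res_i(d)$ and $\Res_i(d_2)$ are non-parallel (Proposition~\ref{prop:CriterionParallelRAB}(ii), then Lemma~\ref{lem:ParallPanels}), correctly shows the pair does not commute. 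Your route differs from the paper's in its mechanics. The paper first uses the wing-inclusion Lemma~\ref{lem:inclusion} to get $U_i(c)\leq V_j(c)$ and $U_j(c)\leq V_i(c)$, so that by symmetry only $U_i(c)$ needs to be treated, and then argues by conjugation: the subgroups $V_i(c')$, for $c'\neq c$ in the $i$-panel of $c$, are non-trivial with pairwise disjoint supports and are permuted by an element of $U_i(c)$ supplied by Proposition~\ref{prop:Panel:extension}, which forces non-commutativity. You instead exhibit an explicit non-commuting pair inside each of $U_i(c)$ and $V_i(c)$, certifying membership by the $D_\infty$ projection computation; note that your first identity (every $x\notin X_i(c)$ projects to $c$ on the $j$-panel of $c$) is exactly the wing inclusion $X_j(d)\subseteq X_i(c)$ that the paper extracts from Lemma~\ref{lem:inclusion}, so you are re-proving that inclusion by hand in rank~$2$. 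The paper's argument is shorter because it reuses the wing machinery and the disjoint-support/conjugation pattern that recurs later in the paper; yours is more self-contained and symmetric in $U$ and $V$, and it makes the non-commuting witnesses completely explicit without needing the separate non-triviality step for the groups $V_i(c')$.
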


\begin{proof}
By hypothesis, there exists $j \in I$ not contained in $i \cup i^\perp$. Let $x \neq c$ be a chamber $j$-adjacent to $c$. Then $X_j(x) \subset X_i(c)$ by Lemma~\ref{lem:inclusion}. This implies that $U_i(c)$ fixes pointwise $X_j(x)$ for all chambers $x \neq c$ that are $j$-adjacent to but different from $c$. In particular $U_i(c) $ is contained in $V_j(c)$. Likewise, since $i \not \in j \cup j^\perp$, we have $U_j(c) \leq V_i(c)$. 
In view of the symmetry between $i$ and $j$, it only remains to show that $U_i(c)$ is not abelian. 

Proposition~\ref{prop:Panel:extension} implies that $U_j(c)$   is non-trivial; so is thus  $V_i(x)$  for all $x \in \Ch(X)$ in view of what we have just observed. 

For each $c' \neq c$   that is $i$-adjacent to $c$, the group $V_i(c')$ is contained in $U_i(c)$. Moreover, if $c' , c''$ are two distinct such chambers, the groups $V_i(c')$ and $V_i(c'')$ are different since they are non-trivial and have disjoint supports. By Proposition~\ref{prop:Panel:extension}, there is $u \in U_i(c)$ mapping $c'$ to $c''$. We then have 
$uV_i(c') u\inv = V_i(c'') \neq V_i(c').$ In particular $u$ does not commute with $V_i(c')$, which proves that $U_i(c)$ is not abelian. 
\end{proof}

Given $G \leq \Aut(X)$, the pointwise stabiliser of the chamber set $\Ch(R)$ of a residue $R$ is denoted by $\Fix_G(R)$. We shall next describe how the groups $U_i(c)$ and $V_i(c)$  provide  convenient generating sets for the pointwise stabilisers of residues  in $X$. We start with wall-residues; the case of spherical residues is postponed to Proposition~\ref{prop:Fix} below.

\begin{prop}\label{prop:ProductDec}
Let $X$ be a right-angled building of type $(W, I)$. 
Let $c \in \Ch(X)$ and $i \in I$, and let $R = \Res_{i \cup i^\perp}(c)$ be the residue of type $i \cup i^\perp$ of $c$. 

Then we have 
$$
\Fix_{\Aut(X)^+}(R) = \prod_{d \sim_i c} V_i(d).
$$
\end{prop}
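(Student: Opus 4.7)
The plan is to prove the two inclusions separately. Set $\sigma = \Res_i(c)$ and $\sigma^\perp = \Res_{i^\perp}(c)$, so that by Lemma~\ref{lem:ProductRes} one has a product decomposition $\Ch(R) = \Ch(\sigma) \times \Ch(\sigma^\perp)$ via $x \mapsto (\proj_\sigma(x), \proj_{\sigma^\perp}(x))$. Under this identification, the $i$-panels inside $R$ are precisely the slices $\Ch(\sigma) \times \{e^*\}$, while for $d' \in \Ch(\sigma)$ the residue $\Res_{i^\perp}(d')$ corresponds to the slice $\{d'\} \times \Ch(\sigma^\perp)$.

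For the inclusion $\supseteq$, note first that since wings $X_i(d)$ and $X_i(d')$ are disjoint for distinct $d, d' \in \Ch(\sigma)$, the subgroups $V_i(d)$ and $V_i(d')$ have disjoint supports; they therefore commute and have trivial pairwise intersection, so they generate an internal direct product. It then remains to show $V_i(d) \subseteq \Fix(R)$ for each $d \sim_i c$. Fix $g \in V_i(d)$: by definition $g$ fixes $\Ch(\sigma) \setminus \{d\}$, and since $g$ is type-preserving it stabilises $\sigma$ setwise, hence fixes $d$ too; so $g$ fixes $\sigma$ pointwise. By Lemma~\ref{lem:BasicWings}(iii), for each $d' \neq d$ in $\Ch(\sigma)$ the set $\Res_{i^\perp}(d') = X_i(d') \cap \Ch(R)$ lies outside $X_i(d)$, so $g$ fixes $\Res_{i^\perp}(d')$ pointwise. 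Finally, given $(d, e^*) \in \{d\} \times \Ch(\sigma^\perp)$, the chamber $(d, e^*)$ is $i$-adjacent to $(d', e^*)$ for any $d' \neq d$; since $g$ fixes $(d', e^*)$ and preserves $i$-adjacency, $g(d, e^*)$ lies in the slice $\Ch(\sigma) \times \{e^*\}$, and since $g$ fixes $\sigma$ pointwise the first coordinate must be $d$. Hence $g(d, e^*) = (d, e^*)$, and $g$ fixes all of $R$.

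For the converse inclusion $\subseteq$, let $g \in \Fix(R)$. Since $g$ fixes $\sigma$ pointwise and $\proj_\sigma$ is equivariant under the stabiliser of $\sigma$, $g$ stabilises each wing $X_i(d)$ for $d \sim_i c$. For each such $d$, I define $g_d \colon \Ch(X) \to \Ch(X)$ by $g_d(x) = g(x)$ if $x \in X_i(d)$ and $g_d(x) = x$ otherwise. The map $g_d$ is clearly type-preserving and supported in $X_i(d)$, and since the wings partition $\Ch(X)$ one has $g = \prod_{d \sim_i c} g_d$ chamber by chamber. The principal obstacle is verifying that each $g_d$ is genuinely an automorphism of $X$, i.e. preserves the Weyl-distance.

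The only delicate case is a pair $(x, y)$ with $x \in X_i(d)$ and $y \notin X_i(d)$; all other cases follow immediately either from $g \in \Aut(X)^+$ or from the identity. Here Lemma~\ref{lem:concat} is the workhorse: a minimal gallery from $x$ to $y$ factors as $x \to x^* \to y^* \to y$, where $x^* = \proj_{\overline\sigma}(x)$ and $y^* = \proj_{\overline\sigma}(y)$ lie in $R = \overline\sigma$ and are thus fixed by $g$. Since $g$ stabilises both $\overline\sigma$ and $X_i(d)$, one has $\proj_{\overline\sigma}(g(x)) = g(x^*) = x^*$ and $g(x) \in X_i(d)$, so Lemma~\ref{lem:concat} applies identically to the pair $(g(x), y)$. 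Expanding the Weyl-distance along both concatenated galleries and using $\delta(g(x), x^*) = \delta(x, x^*)$ (from $g$ being an automorphism) yields $\delta(g_d(x), g_d(y)) = \delta(g(x), y) = \delta(x, y)$, confirming that $g_d \in \Aut(X)^+$, and hence $g_d \in V_i(d)$. This completes the proof.
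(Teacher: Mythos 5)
Your proof is correct and takes essentially the same route as the paper: the inclusion $\prod_d V_i(d)\subseteq\Fix(R)$ via the product structure of Lemma~\ref{lem:ProductRes}, and the converse by cutting $g\in\Fix(R)$ into the wing-restrictions $g_d$ and verifying they preserve the Weyl-distance through the concatenation Lemma~\ref{lem:concat}. The only point the paper treats more carefully is that the (possibly infinite, unrestricted) product $\prod_{d\sim_i c}V_i(d)$ really is a subgroup of $\Aut(X)^+$ (its Lemma~\ref{lem:InfProd}), which your phrases ``generate an internal direct product'' and ``chamber by chamber'' gloss over when the panel is of infinite thickness.
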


We will use the following subsidiary fact. 
\begin{lem}\label{lem:InfProd}
Let $n >0$ be an integer, let $C, W$ be sets and let $\delta \colon C^n \to W$ be a map. Let $G$ denote the group of all permutations $g \in \mathrm{Sym}(C)$ such that $\delta(g(x_1), \dots, g(x_n)) = \delta(x_1, \dots, x_n)$ for all $(x_1, \dots , x_n) \in C^n$. Let moreover $(V_i)_{i \in I}$ a collection of groups  indexed by a set $I$, and for all $i \in I$, let $\varphi_i \colon V_i \to G$ be an injective homomorphism  such that for all $i \neq j$, the subgroups $\varphi_i(V_i)$ and $\varphi_j(V_j)$ have disjoint supports. Then there is a unique injective homomorphism  
$$ \varphi \colon \prod_{j \in I} V_j \to G$$
such that for all $i \in I$, the composed map $\varphi \circ \iota_i  = \varphi_i$, where $\iota_i \colon V_i \to  \prod_{j \in I} V_j$ is the canonical inclusion. 
\end{lem}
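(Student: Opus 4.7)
The plan is to construct $\varphi$ by a pointwise prescription that, on any single point of $C$, involves only one of the $V_i$'s, and then to verify the required properties via a ``locally finite'' approximation argument.

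Concretely, I would proceed as follows. By the disjoint-support hypothesis, for every $x \in C$ there is at most one index $i \in I$ such that $x$ belongs to $\mathrm{supp}(\varphi_i(V_i))$; when it exists, denote it by $i_x$. For $(v_j) \in \prod_{j \in I} V_j$, define
\[
\varphi\bigl((v_j)\bigr)(x) \;=\;
\begin{cases} \varphi_{i_x}(v_{i_x})(x) & \text{if } i_x \text{ exists}, \\ x & \text{otherwise.} \end{cases}
\]
The key observation that makes everything work is that for any finite tuple $(x_1, \ldots, x_n) \in C^n$, only finitely many indices $i \in I$ (at most $n$) satisfy $\mathrm{supp}(\varphi_i(V_i)) \cap \{x_1, \ldots, x_n\} \neq \emptyset$; call this finite set $F$. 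Since the $\varphi_i(V_i)$ with $i \in F$ have pairwise disjoint supports they pairwise commute, so the product $g_F = \prod_{i \in F} \varphi_i(v_i)$ is a well-defined element of $G$, and by construction $g_F$ agrees with $\varphi((v_j))$ on $\{x_1, \ldots, x_n\}$.

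From this finite-approximation principle the verifications become routine. First, $\varphi((v_j))$ is a bijection of $C$ with inverse $\varphi((v_j^{-1}))$. Second, preservation of $\delta$ on each tuple $(x_1, \ldots, x_n)$ is inherited from $g_F \in G$. Third, the homomorphism property $\varphi((v_j)(w_j)) = \varphi((v_j))\varphi((w_j))$ is checked pointwise, each side reducing to an action of the same finite product evaluated on $x$. Fourth, if $(v_j) = \iota_i(v_i)$, then $\varphi((v_j))$ is trivial off $\mathrm{supp}(\varphi_i(V_i))$ and coincides with $\varphi_i(v_i)$ on it, so $\varphi \circ \iota_i = \varphi_i$. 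Injectivity of $\varphi$ follows from injectivity of each $\varphi_i$: if $\varphi((v_j)) = \mathrm{id}$, then each $\varphi_i(v_i)$ fixes every point of its support, hence is trivial, forcing $v_i = 1$.

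For uniqueness, suppose $\varphi'$ is another such homomorphism. Any two extensions automatically agree on the subgroup $\bigoplus_{j \in I} V_j$ generated by the $\iota_i(V_i)$, since on a single $\iota_i(v_i)$ the compatibility $\varphi' \circ \iota_i = \varphi_i$ forces the value. To extend agreement to all of $\prod_{j \in I} V_j$, I would argue pointwise: for any $(v_j)$ and any $x \in C$, writing $(v_j) = \iota_{i_x}(v_{i_x}) \cdot (w_j)$ where $w_{i_x} = 1$ and $w_j = v_j$ for $j \neq i_x$, the action $\varphi'((w_j))(x)$ must equal $x$ because $(w_j)$ commutes with $\iota_{i_x}(V_{i_x})$ in the product and hence $\varphi'((w_j))$ centralises $\varphi_{i_x}(V_{i_x})$ in $G$, and the finite-approximation argument applied to $\varphi'$ on the single-point tuple $(x)$ forces the value to coincide with $\varphi((w_j))(x) = x$. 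I expect the subtlety of the uniqueness argument to be the main obstacle, since $\prod V_j$ is strictly larger than the subgroup $\bigoplus V_j$ on which compatibility with the $\iota_i$ pins everything down directly; the resolution is precisely the observation that the action on any \emph{single} point depends on only one coordinate.
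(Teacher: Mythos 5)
Your construction of $\varphi$ and the verification that it lands in $G$ are exactly the paper's argument: the same pointwise definition via the unique index $i_x$ with $x\in\supp(\varphi_{i_x}(V_{i_x}))$, and the same reduction of $\delta$-preservation on an $n$-tuple to a finite commuting product $g_F\in G$ that agrees with $\varphi((v_j))$ on that tuple. The existence, homomorphism, and injectivity parts are correct and complete, and this is all the paper itself proves (it dismisses uniqueness with ``the uniqueness of $\varphi$ is clear'').

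Your uniqueness argument, however, has a genuine gap. The step ``$\varphi'((w_j))$ centralises $\varphi_{i_x}(V_{i_x})$, hence fixes $x$'' is false: the centraliser in $\Sym(C)$ of a subgroup $H$ contains permutations moving points of $\supp(H)$ (a transposition centralises the group it generates), so commuting does not force fixing. And the appeal to ``the finite-approximation argument applied to $\varphi'$'' is circular: that principle is a feature of the map you \emph{constructed}; for an arbitrary $\varphi'$ you do not know that $\varphi'((w_j))(x)$ depends only on the coordinate $w_{i_x}$ --- that is precisely what uniqueness would assert. Indeed, as literally stated the uniqueness claim appears to fail: take $n=1$, $\delta$ constant (so $G=\Sym(C)$), $I=\mathbf{N}$, and $V_i=\mathbf{Z}/2$ acting by pairwise disjoint transpositions; any nontrivial $\mathbf{F}_2$-linear automorphism $\alpha$ of $\prod_i V_i$ that fixes $\bigoplus_i V_i$ pointwise (such exist, by extending a basis of the proper subspace $\bigoplus_i V_i$) gives a second injective homomorphism $\varphi\circ\alpha$ with the same restrictions $\varphi_i$. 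So you correctly identified the subtle point, but the resolution you propose does not work; the honest statement is that $\varphi$ is the unique such homomorphism whose value at each $x$ depends only on the coordinate $i_x$ (or, equivalently, unique on the restricted product $\bigoplus_j V_j$), which is what the applications in the paper actually use.
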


The only relevant case for this paper is when $C$ is the chamber set of a building $X$ and $\delta \colon C \times C \to W$ is the Weyl-distance. In that case, the group $G$ from Lemma~\ref{lem:InfProd} coincides with the group $\Aut(X)^+$ of type-preserving automorphisms of $X$. 

\begin{proof}[Proof of Lemma~\ref{lem:InfProd}]
The uniqueness of $\varphi$ is clear; we focus on the existence proof. 
Set $V = \prod_{j \in I} V_j$ and let $g = (g_j)_{j \in I} \in V$. Given $x \in C$, there is at most one index $j \in I$ such that $\varphi_j(V_j)$ does not fix $x$, since the subgroups   $\varphi_i(V_i)$ have disjoint supports. We set $\varphi(g)(x) = \varphi_j(g_j)(x)$ if there exists such a $j \in I$, and  $\varphi(g)(x) = x$ otherwise. This defines a homomorphism $\varphi \colon \prod_{i \in I} V_i \to \mathrm{Sym}(C)$ such that $\varphi \circ \iota_i  = \varphi_i$ for all $i \in I$. It is injective, since $\varphi(g) = 1$ implies that $\varphi_i(g_i) = 1$ for all $i$, and hence $g_i = 1$ for all $i$ since all $\varphi_i$ are injective by hypohtesis. It remains to prove that $\varphi(g) \in G$. Given  $(x_1, \dots, x_n) \in C^n$, let $J \subseteq I$ be the (possibly empty) subset consisting all the indices $j \in I$ such that $\varphi_j(g_j)$ does not fix all elements of $\{x_1, \dots, x_n\}$. Thus $J$ is finite of cardinality~$\leq n$. Let $g_J$ denote the product of  the elements $\varphi_j(g_j) \in G$ over all $j \in J$, in an arbitrary order; if $J = \varnothing$, we set $g_J = 1$. Since two distinct  subgroups $\varphi_i(V_i)$ and $\varphi_j(V_j)$ have disjoint supports, they commute, and it follows that the product $g_J$ is independent of the chosen order. Moreover,  we have  $\varphi(g)(x_i) = g_J(x_i)$ for all $i \in \{1, \dots, n\}$. Since $g_J \in G$, we infer that $\delta(\varphi(g)(x_1), \dots, \varphi(g)(x_n)) = \delta(g_J(x_1), \dots, g_J(x_n)) = \delta(x_1, \dots, x_n)$, as desired.
\end{proof}

\begin{proof}[Proof of Proposition~\ref{prop:ProductDec}]
Let $d \sim_i c$. Given any $x \in \Ch(R)$, we deduce from Lemma~\ref{lem:ProductRes} that $V_i(d)$ fixes all chambers of the $i$-panel of $x$ different from the projection of $c$. Hence $V_i(d)$ fixes all chambers of that panel. This proves that $V_i(d)$ is contained in  $\Fix_{\Aut(X)^+}(R)$. 

Remark that for two different chambers $d, d' $ that are $i$-adjacent to $c$, the groups $V_i(d)$ and $V_i(d')$  have disjoint supports. From Lemma~\ref{lem:InfProd}, we deduce that 
the (possibly infinite) direct product  $\prod_{d \sim_i c} V_i(d)$ is contained in $\Fix_{\Aut(X)^+}(R)$. 

It remains to show that every element $g \in \Fix_{\Aut(X)^+}(R)$ belongs to the product  $\prod_{d \sim_i c} V_i(d)$. In order to see this, fix $g \in \Fix_{\Aut(X)^+}(R)$ and $d \sim_i c$, and consider the permutation $g_d$ of $\Ch(X)$ defined by
$$g_d \colon  \Ch(X) \to \Ch(X)  \colon x \mapsto 
\left\{ 
\begin{array}{ll}
g(x) & \text{if } x \in X_i(d) \\
x & \text{otherwise}.
\end{array}
\right.
$$
We claim that $g_d \in V_i(d)$. To see this, let $x, y \in \Ch(X)$ and let $\delta \colon \Ch(X) \times \Ch(X) \to W$ denote the Weyl-distance. We need to show that $\delta(g_d(x), g_d(y)) = \delta(x, y)$. By the definition of $g_d$, it suffices to consider the case when  $x \in X_i(d)$ and $y \not \in X_i(d)$ (or vice-versa). By Lemma~\ref{lem:concat}, we have 
$$\delta(x, y ) = \delta(x, x') \delta (x', y') \delta(y', y),$$
where $x' = \proj_R(x)$, $y' = \proj_R(y)$ and $R = \Res_{i \cup i^\perp}(d)$. Moreover, the element $g \in \Aut(X)^+$ fixes $ x', y$ and $y'$ and preserves $R$ and $X_i(d)$. Thus we have $\proj_R(g_d(x)) = \proj_R(g(x))= x'$ and,  invoking Lemma~\ref{lem:concat} once more, we deduce 
$$
\begin{array}{rcl}
\delta(g_d(x),  g_d(y))  &= &  \delta( g(x), y)\\
& = & \delta(g(x), x') \delta(x', y') \delta(y', y) \\
& = & \delta(x, x') \delta(x', y') \delta(y', y) \\
& = &  \delta(x, y)
\end{array}
$$
as desired. Thus $g_d$ is a type-preserving automorphism of $X$. By construction, we have $g_d \in V_i(d)$. Moreover the tuple  $(g_d)_{d \sim_i c}$, which is an element of the direct product $ \prod_{d \sim_i c} V_i(d)$, coincides with $g$. Therefore $g \in  \prod_{d \sim_i c} V_i(d)$. 
\end{proof}

\section{Strong transitivity}

\begin{prop}\label{prop:StrongTrans}
Let $X$ be a semi-regular right-angled building. 

Then the group $\Aut(X)^+$ is strongly transitive on $X$. 
\end{prop}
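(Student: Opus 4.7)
By chamber-transitivity of $\Aut(X)^+$ (Haglund--Paulin, see \cite[Prop.~1.2]{HP}), strong transitivity reduces to showing that for a fixed chamber $c$, the stabiliser $\Stab_{\Aut(X)^+}(c)$ acts transitively on apartments through $c$. Fix two apartments $A$ and $A'$ both containing $c$. For every $w \in W$ there are unique chambers $y_w \in A$ and $y_w' \in A'$ with Weyl-distance $w$ from $c$; we aim to produce $g \in \Stab(c)$ such that $g(y_w) = y_w'$ for all $w \in W$.

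The construction proceeds by iteration, enumerating $W$ as $w_0 = 1, w_1, w_2, \dots$ with $\ell(w_n)$ non-decreasing and inductively building $g_n \in \Stab(c)$ such that $g_n(y_{w_j}) = y_{w_j}'$ for all $j \leq n$. For the inductive step, write $w_{n+1} = w_k s_i$ with $k \leq n$ and $\ell(w_{n+1}) = \ell(w_k) + 1$; then both $g_n(y_{w_{n+1}})$ and $y_{w_{n+1}}'$ lie in the $i$-panel $\sigma = \Res_i(y_{w_k}')$. If they coincide, put $g_{n+1} = g_n$. Otherwise, Proposition~\ref{prop:Panel:extension} applied to the transposition $\alpha \in \Sym(\Ch(\sigma))$ swapping them (and fixing every other chamber of $\sigma$) provides $\tilde\alpha \in \Aut(X)^+$ stabilising $\sigma$, agreeing with $\alpha$ on $\Ch(\sigma)$, and fixing every chamber whose projection to $\sigma$ is fixed by $\alpha$; set $g_{n+1} := \tilde\alpha \circ g_n$.

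The critical verification is that $g_{n+1}(y_{w_j}) = y_{w_j}'$ persists for $j \leq n$. By Proposition~\ref{prop:Panel:extension}(ii), this amounts to $\proj_\sigma(y_{w_j}') = y_{w_k}'$ rather than $y_{w_{n+1}}'$; equivalently, $y_{w_j}'$ must lie on the $y_{w_k}'$-side of the wall of $A'$ through $\sigma$. Since balls around $c$ in the Coxeter complex are convex, and this wall lies at distance $\ell(w_k)$ from $c$, the condition holds automatically whenever $\ell(w_j) \leq \ell(w_k)$. The delicate situation is when $\ell(w_j) = \ell(w_{n+1})$: a previously-processed same-length chamber can sit on the wrong side, typically when a commuting relation $m_{i, i'} = 2$ intervenes. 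This is the main obstacle, and it is handled by refining the enumeration to process each length-shell in batches organized by $\{i, i'\}$-residues, and invoking the stronger residue-extension result Proposition~\ref{prop:extension} at the wall-residue $\overline{\sigma} = \Res_{i \cup i^\perp}(y_{w_k}')$; the product decomposition of $A' \cap \overline\sigma$ supplied by Lemma~\ref{lem:ProductRes} makes the rearrangement coherent.

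Finally, by choosing each successive $\tilde\alpha$ so as not to disturb any chamber previously committed, the sequence $(g_n(x))_{n \geq 0}$ is eventually constant for every $x \in \Ch(X)$. The pointwise limit $g$ preserves the Weyl-distance (since each pair of chambers is eventually stable under $g_n$), hence defines an element of $\Aut(X)^+$; by construction $g$ fixes $c$ and sends $y_w$ to $y_w'$ for every $w \in W$, so $g(A) = A'$. The hard part of the proof is entirely concentrated in the combinatorial side-condition at length-shells, resolved via the wall-residue machinery of Section~3.
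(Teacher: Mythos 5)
Your overall strategy (fix $c$, correct the image of $A$ chamber by chamber using extensions of panel permutations with controlled support, then pass to a pointwise limit) is in the same spirit as the paper's proof, but the step on which everything hinges is missing, and the one concrete claim you make about it is wrong. Proposition~\ref{prop:Panel:extension} only guarantees that $\tilde\alpha$ fixes chambers whose projection to $\sigma$ is \emph{fixed} by $\alpha$; it gives no control over previously placed chambers that project onto one of the two swapped chambers, and such chambers exist as soon as commuting generators are present. Your assertion that the side-condition ``holds automatically whenever $\ell(w_j)\le \ell(w_k)$'' by convexity of balls is false: take $m_{1,2}=2$, $w_k=s_1$, $i=2$, $w_{n+1}=s_1s_2$, $w_j=s_2$; then $\sigma=\Res_2(y'_{w_k})$ and $\proj_\sigma(y'_{s_2})=y'_{s_1s_2}$, which is precisely a chamber moved by $\alpha$, even though $\ell(w_j)\le\ell(w_k)$. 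So the interference problem is not confined to the current length-shell, and your proposed remedy --- ``process each length-shell in batches organized by $\{i,i'\}$-residues and invoke Proposition~\ref{prop:extension} at the wall-residue, the product decomposition of Lemma~\ref{lem:ProductRes} makes the rearrangement coherent'' --- is a placeholder, not an argument. The entire non-trivial content of the proposition is concentrated exactly there, and it is not supplied.

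What is actually needed (and what the paper proves) is the following: whenever a correction is genuinely required at $y'_{w_{n+1}}$, one has $\proj_{\overline{\sigma}}(c)=y'_{w_k}$ for the wall-residue $\overline\sigma=\Res_{i\cup i^\perp}(y'_{w_k})$; the paper's proof of this claim uses that all chambers of $A'$ closer to $c$ are already matched, together with Lemma~\ref{lem:ProductRes}. Granting this claim, Corollary~\ref{cor:balls} shows that the whole ball $B(c,\ell(w_k)+1)$, apart from $\Ch(\sigma)$ itself, lies in the wing $X_i(y'_{w_k})$, so the correcting element can be taken in the wing-fixator $U_i(y'_{w_k})$ (Lemma~\ref{lem:U}); such an element fixes every previously matched chamber, which is what your transposition-based step cannot guarantee. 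The paper organizes the induction by balls $B(c,n)$ rather than by single elements of $W$ and performs all corrections of a given shell simultaneously via Lemma~\ref{lem:U}, but the essential point is the same: the panel-level extension result must be upgraded, through the wall-residue/wing machinery, to corrections supported away from everything already committed. Until you prove the projection claim (or an equivalent statement), your induction does not close and the proof is incomplete.
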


We need the following basic consequence of Proposition~\ref{prop:Panel:extension}.

\begin{lem}\label{lem:U:residue}
Let $X$ be a semi-regular right-angled building of type $(W, I)$. 
Let $R$ be a residue and let $n, t$ be non-negative integers. For all $s \in \{ 1, \dots, t\}$, let also:
\begin{itemize}
\item $i_s \in I$, 

\item $\overline{ \sigma_s}$ be a residue of type $i_s \cup i_s^\perp$ such that $\dist(R, \overline{ \sigma_s}) = n$,

\item $c_s \in \Ch(R'_s)$, where $R'_s  = \proj_{\overline{\sigma_s}}(R)$. 

\item $\pi_s$ be a  permutation of $\Ch(\sigma_s)$ fixing $c_s$, where  $\sigma_s = \Res_{i_s}(c_s)$. 
\end{itemize}
Assume that the pairs  
$(\overline{ \sigma_1}, i_1), \dots , (\overline{ \sigma_t}, i_t)$  are pairwise distinct, and that $\Ch(R'_s) \subseteq X_{i_s}(c_s)$ for all $s \in \{1, \dots, t\}$. 

 Then  there is $g \in \la U_{i_s}(c_s) \; | \; s = 1, \dots, t\ra$ such that $g|_{\Ch(\sigma_s)} =  \pi_s$ for all $s$. Moreover $g$ fixes pointwise the set $B(R, n+1) - \bigcup_{s=1}^t  \bigcup_{z \in \Ch(R'_s)} \Ch(\Res_{i_s}(z))$.
\end{lem}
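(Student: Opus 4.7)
The plan is to lift each permutation $\pi_s$ to a global type-preserving automorphism $\tilde\pi_s$ of $X$ lying in $U_{i_s}(c_s)$, and then take $g$ to be the composition $\tilde\pi_1\tilde\pi_2\cdots\tilde\pi_t$. Concretely, I would apply Proposition~\ref{prop:Panel:extension} to $\pi_s$ to obtain $\tilde\pi_s \in \Aut(X)^+$ stabilising $\sigma_s$, with $\tilde\pi_s|_{\Ch(\sigma_s)} = \pi_s$ and fixing every chamber of $X$ whose projection on $\sigma_s$ is fixed by $\pi_s$. Since $\pi_s$ fixes $c_s$, the element $\tilde\pi_s$ fixes pointwise the wing $X_{i_s}(c_s) = \{x \; | \; \proj_{\sigma_s}(x) = c_s\}$, so $\tilde\pi_s \in U_{i_s}(c_s)$ and $g$ automatically belongs to the required subgroup $\la U_{i_s}(c_s) \; | \; s = 1, \dots, t \ra$.

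The outer fixation statement will then follow immediately: for any $x \in B(R, n+1) - \bigcup_{s=1}^t \bigcup_{z \in \Ch(R'_s)} \Ch(\Res_{i_s}(z))$, Lemma~\ref{lem:balls:bis}(ii) applied to each index $s$ forces $x \in X_{i_s}(c_s)$ (the hypotheses of that lemma being exactly what is assumed here), so every $\tilde\pi_s$ fixes $x$ and hence so does $g$.

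The main obstacle is to verify that $g|_{\Ch(\sigma_s)} = \pi_s$ for all $s$: in principle, the other factors $\tilde\pi_{s'}$ with $s' \neq s$ could move chambers of $\sigma_s$ and spoil the desired restriction. I would reduce this to the key claim that $\Ch(\sigma_s) \subseteq X_{i_{s'}}(c_{s'})$ whenever $s \neq s'$. Granted the claim, every $\tilde\pi_{s'}$ with $s' \neq s$ fixes $\Ch(\sigma_s)$ pointwise, so for any $x \in \Ch(\sigma_s)$ the factors applied before $\tilde\pi_s$ leave $x$ untouched, then $\tilde\pi_s$ sends $x$ to $\pi_s(x) \in \Ch(\sigma_s)$, and the factors applied after $\tilde\pi_s$ leave $\pi_s(x)$ untouched as well; hence $g(x) = \pi_s(x)$, independently of the order chosen for the product.

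To prove the claim, I would argue as follows. By Lemmas~\ref{lem:ParallelProj} and \ref{lem:ConstantDistance}, every chamber of $\Ch(R'_s)$ lies at distance exactly $n$ from $R$, so in particular $c_s \in B(R, n)$; Lemma~\ref{lem:balls:bis}(i) applied to the data $(\overline{\sigma_{s'}}, i_{s'})$ then gives $c_s \in X_{i_{s'}}(c_{s'})$, i.e.\ $\proj_{\sigma_{s'}}(c_s) = c_{s'}$. If some other $d \in \Ch(\sigma_s)$ had $\proj_{\sigma_{s'}}(d) \neq c_{s'}$, then two chambers of $\sigma_s$ would have distinct projections on $\sigma_{s'}$, so by Lemma~\ref{lem:ParallPanels} the panels $\sigma_s$ and $\sigma_{s'}$ would be parallel. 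Proposition~\ref{prop:CriterionParallelRAB} would then force $i_s = i_{s'}$ and $\overline{\sigma_s} = \overline{\sigma_{s'}}$, contradicting the pairwise distinctness of the pairs $(\overline{\sigma_s}, i_s)$. This contradiction establishes the claim and completes the plan.
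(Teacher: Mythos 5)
Your proposal is correct and follows essentially the same route as the paper: extend each $\pi_s$ via Proposition~\ref{prop:Panel:extension} to an element of $U_{i_s}(c_s)$, use Lemma~\ref{lem:balls:bis} for the fixation of $B(R,n+1)$ off the exceptional panels, and rule out interference between distinct factors by excluding parallelism of $\sigma_s$ and $\sigma_{s'}$ together with the inclusion of $B(R,n)$ in the relevant wings. The only (harmless) difference is cosmetic: you prove $\Ch(\sigma_s)\subseteq X_{i_{s'}}(c_{s'})$ while the paper proves the symmetric containment $\Ch(\sigma_{s'})\subseteq X_{i_s}(c_s)$, and you make explicit, via Lemmas~\ref{lem:ParallelProj} and~\ref{lem:ConstantDistance}, the fact that chambers of $R'_s$ lie at distance $n$ from $R$, which the paper uses implicitly.
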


\begin{proof}
Let $s \in \{ 1, \dots, t\}$. By Proposition~\ref{prop:Panel:extension}, there exists  $g_s \in U_{i_s}(c_s)$ with $g_s |_{\Ch(\sigma_s)} = \pi_s$. 
By Lemma~\ref{lem:balls:bis}, every element of $U_{i_s}(c_s)$ fixes pointwise the set $B(R, n+1) - \big(\bigcup_{z \in \Ch(R'_s)} \Ch(\Res_i(z))\big)$. 

Let now $s' \neq s$. If $\sigma_{s'}$ were parallel to $\sigma_s$, we would have $i_s = i_{s'}$ and $ \overline{ \sigma_s} =  \overline{ \sigma_{s'}}$ by Proposition~\ref{prop:CriterionParallelRAB}, contradicting our hypotheses. Therefore $\proj_{\sigma_{s}}(\sigma_{s'})$ is a single chamber (see  Lemma~\ref{lem:ParallPanels}). Moreover we have 
$c_{s'} \in \Ch(\sigma_{s'}) \cap B(R, n)$, and  $B(R, n) \subseteq X_{i_s}(c_s)$ by Lemma~\ref{lem:balls:bis}. We infer that $\proj_{\sigma_{s}}(\sigma_{s'}) = c_s$ or, equivalently, that $\Ch(\sigma_{s'}) \subseteq X_{i_s}(c_s)$. Therefore $\Ch(\sigma_{s'})$ is pointwise fixed by $g_s$. It follows that the product $g = g_1 \dots g_t$ enjoys the desired properties. 
\end{proof}

In order to facilitate future references, we state the following special case separately. 

\begin{lem}\label{lem:U}
Let $X$ be a semi-regular right-angled building of type $(W, I)$. 
Let $x \in \Ch(X)$ and let $n, t$ be non-negative integers. For all $s \in \{ 1, \dots, t\}$, let also:
\begin{itemize}
\item $c_s  \in \Ch(X)$ be such that $\dist(x, c_s) = n$, 

\item $i_s \in I$ be such that  $\proj_{\overline{ \sigma_s}}(x) = c_s$, where $\overline{\sigma_s} = \Res_{i_s \cup i_s^\perp}(c_s)$,

\item $\pi_s$ be a  permutation of $\Ch(\sigma_s)$ fixing $c_s$, where  $\sigma_s = \Res_{i_s}(c_s)$. 
\end{itemize}
Assume that the pairs $(c_1, i_1), \dots, (c_t, i_t)$ are pairwise distinct.   Then  there is $g \in \la U_{i_s}(c_s) \; | \; s = 1, \dots, t\ra$ whose restriction to $\Ch(\sigma_s)$ is $\pi_s$ for all $s$. Moreover $g$ fixes pointwise the set $B(x, n+1) - \bigcup_{s=1}^t \Ch(\sigma_s)$.
\end{lem}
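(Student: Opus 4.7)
The plan is to derive Lemma~\ref{lem:U} as a direct corollary of Lemma~\ref{lem:U:residue} by specializing the more general result to the rank-zero residue $R = \{x\}$. The entire argument reduces to verifying that the hypotheses of Lemma~\ref{lem:U:residue} hold in this setting and that its conclusion unwinds to precisely the statement being claimed.

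With $R = \{x\}$, the hypothesis $\proj_{\overline{\sigma_s}}(x) = c_s$ forces $R'_s := \proj_{\overline{\sigma_s}}(R) = \{c_s\}$, so $\dist(R, \overline{\sigma_s}) = \dist(x, c_s) = n$ and $\Ch(R'_s) = \{c_s\} \subseteq X_{i_s}(c_s)$ are automatic. The condition on the permutations $\pi_s$ (each fixing $c_s$) is identical in both statements. The only nontrivial point to check is pairwise distinctness of the pairs $(\overline{\sigma_s}, i_s)$: if one had $(\overline{\sigma_s}, i_s) = (\overline{\sigma_{s'}}, i_{s'})$, then projecting $x$ onto the common residue $\overline{\sigma_s} = \overline{\sigma_{s'}}$ would yield $c_s = c_{s'}$, contradicting the assumed distinctness of the pairs $(c_s, i_s)$.

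Lemma~\ref{lem:U:residue} then supplies $g \in \langle U_{i_s}(c_s) \mid s = 1, \dots, t \rangle$ with $g|_{\Ch(\sigma_s)} = \pi_s$ for all $s$; and the excluded set
$\bigcup_s \bigcup_{z \in \Ch(R'_s)} \Ch(\Res_{i_s}(z))$
collapses to $\bigcup_s \Ch(\sigma_s)$ since $R'_s = \{c_s\}$ and $\Res_{i_s}(c_s) = \sigma_s$. This matches the required fixing condition verbatim. I expect no real obstacle: the statement is essentially a convenient repackaging of the general lemma in the case $R = \{x\}$, and the only detail requiring a line of argument is the translation of the pairwise-distinctness hypothesis noted above.
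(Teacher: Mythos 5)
Your proposal is correct and is essentially identical to the paper's own proof: the paper likewise deduces Lemma~\ref{lem:U} from Lemma~\ref{lem:U:residue} by taking $R=\{x\}$, noting that $\dist(x,\overline{\sigma_s})=\dist(x,c_s)=n$, that $c_s\in X_{i_s}(c_s)$ trivially, and that equality of pairs $(\overline{\sigma_s},i_s)=(\overline{\sigma_{s'}},i_{s'})$ would force $(c_s,i_s)=(c_{s'},i_{s'})$ and hence $s=s'$. The unwinding of the excluded set to $\bigcup_s \Ch(\sigma_s)$ is exactly as you describe.
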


\begin{proof}
Since  $\proj_{\overline{ \sigma_s}}(x) = c_s$, we have $\dist(x, \overline{ \sigma_s}) = \dist(x, c_s) = n$. Clearly $c_s \in X_{i_s}(c_s)$. Moreover, if $(\overline{ \sigma_s}, i_s) = (\overline{ \sigma_{s'}}, i_{s'})$, then $(c_s, i_s) = (c_{s'}, i_{s'})$ and hence $s =s'$ by hypothesis. Thus the desired conclusion follows from Lemma~\ref{lem:U:residue}.
\end{proof}

\begin{proof}[Proof of Proposition~\ref{prop:StrongTrans}]
As observed by Haglund--Paulin \cite{HP}, Proposition~\ref{prop:extension} readily implies that $\Aut(X)^+$ is chamber transitive. We need to show that given a chamber $c \in \Ch(X)$ and two apartments $A, A'$ containing $c$, there is an element $g \in \Aut(X)^+$ fixing $c$ and mapping $A$ to $A'$. 

Set $g_0 = \mathrm{Id}$ and let $n > 0$. We shall construct by induction on $n$ an element $g_n \in \Aut(X)^+$ with the following properties:
\begin{itemize}
\item $g_n$ fixes pointwise the ball of radius $n-1$ around $c$;

\item $g_n g_{n-1} \dots g_0(A) \cap A' \supseteq B(c, n) \cap A'$, where $B(c, n)$ is the ball of radius $n$ around $c$. 
\end{itemize}
The first property ensures that the sequence $(g_n g_{n-1} \dots g_0)_{n \geq 0}$ pointwise converges to a well defined automorphism $g_\infty \in \Aut(X)^+$. The second property yields $g_\infty(A) = A'$, as desired. 

Let $n \geq 0$,  and suppose that $g_0, g_1, \dots, g_n$ have already been constructed. Set $A_n =  g_n g_{n-1} \dots g_0(A)$. Thus $A_n \cap A' $ contains every chamber of $A'$ at distance at most $n$ from $c$. 

We need to construct an automorphism $g_{n+1} \in \Aut(X)^+$ fixing $B(c, n)$ pointwise and such that $g_{n+1}(A_n) \cap A'$ contains $B(c, n+1) \cap A'$. 

Let $E $ be the set of those chambers in $B(c, n+1) \cap A'$ that are not contained in $A_n$. Notice that $E$ is finite (since $B(c, n+1) \cap A'$ is so) and that every chamber in $E$ is at distance $n+1$ from $c$. 

If  $E$ is empty, then we set $g_{n+1} = \mathrm{Id}$ and we are done. Otherwise we enumerate $E = \{x'_1, \dots, x'_t\}$ and consider $s \in \{1, \dots, t\}$.  Let $y_s$  be the first chamber different from $x'_s$ on a minimal gallery from $x'_s$ to $c$. Thus $\dist(c, y_s) = n$ and $y_s \in B(c, n) \cap A'$, hence $y_s \in A_n$. Let  $\sigma_s$ be the panel shared by $x'_s$ and $y_s$ and  let $i_s \in I$ be its type. The pairs $(y_s, i_s)$ are pairwise distinct since $(y_{s_1}, i_{s_1}) = (y_{s_2}, i_{s_2})$ in the apartment $A'$ implies that $x'_{s_1} = x'_{s_2}$ and $s_1 = s_2$. 
Finally,  let $x_s \in A_n$ be the unique chamber which is $i_s$-adjacent to, but different from, $y_s$. 

We claim that $\proj_{\overline{\sigma_s}}(c) = y_s$. In order to establish this, consider $z_s = \proj_{\overline{\sigma_s}}(c)$. If $z_s \neq y_s$, then $\dist(c, z_s) < \dist(c, y_s) = n$. Therefore the unique chamber of $z'_s \in A'$ which $i_s$-adjacent to, but different from, $z_s$, also belong to $A_n$. Since apartments are convex, we infer that $\proj_{\sigma_s}(z'_s) \in A_n \cap A'$. On the other hand we have $\proj_{\sigma_s}(z'_s) = x'_s$ by Lemma~\ref{lem:ProductRes}. This contradicts the fact that $x'_s \not \in A_n$, and the claim stands proven.

We are thus in a position to invoke Lemma~\ref{lem:U}. This yields  an element $g_{n+1} \in \la U_{i_s}(y_s) \; | \; s = 1, \dots, t\ra$ which maps $x_s$ to $x'_s$ for all $s$, and fixes $B(c, n)$ pointwise. It follows that $g_{n+1} $ has the requested properties, and we are done.
\end{proof}

We are thus in a position to invoke Tits' transitivity lemma:

\begin{cor}\label{cor:TitsTrans}
Let $X$ be a thick semi-regular right-angled building of irreducible type.

Then every non-trivial normal subgroup of $\Aut(X)^+$ is transitive on $\Ch(X)$. 
\end{cor}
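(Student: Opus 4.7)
The plan is to invoke Tits' classical transitivity lemma from \cite{Tits:arbres} as a black box. This lemma, originally formulated for tree automorphism groups, asserts that a chamber-transitive action satisfying the ``Tits independence property''~(P) --- that the pointwise fixator of every ``wall'' factors as a direct product of fixators of its complementary sides --- has the feature that every non-trivial normal subgroup is chamber-transitive. The argument applies in the present setting under the following dictionary: the role of walls is played by wall-residues $\Res_{i \cup i^\perp}(c)$, the role of sides by the wings $X_i(d)$ for $d \sim_i c$, and the role of side-fixators by the subgroups $V_i(d)$. The two required hypotheses are already supplied: chamber-transitivity (in fact strong transitivity) of $G = \Aut(X)^+$ is Proposition~\ref{prop:StrongTrans}, while the independence property is precisely the content of Proposition~\ref{prop:ProductDec}:
\[
\Fix_{\Aut(X)^+}\bigl(\Res_{i \cup i^\perp}(c)\bigr) = \prod_{d \sim_i c} V_i(d).
\]

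For the reader's benefit, here is a sketch of how Tits' original argument adapts. Let $N \triangleleft G$ be a non-trivial normal subgroup and pick $n \in N \setminus \triv$. Note that $\{c\} = \bigcap_{i \in I} X_i(c)$ for every chamber $c$ --- any $x \neq c$ fails to lie in $X_j(c)$, where $j$ is the type of the last panel of a minimal gallery from $x$ to $c$, since then $\proj_{\Res_j(c)}(x) \neq c$. Consequently, $n$ must fail to stabilise some wing $X_i(c)$. Lemma~\ref{lem:NonAb}, applicable because the irreducibility and non-sphericity of $(W,I)$ guarantee $i \cup i^\perp \neq I$ for every $i$, supplies a non-trivial $v \in V_i(c)$. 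The commutator $[n, v] = (nvn^{-1})v^{-1}$ then belongs to $N$, and thanks to the disjointness of the product factors in Proposition~\ref{prop:ProductDec}, its support is confined to the union $X_i(c) \cup X_i(n(c))$ of two wings, strictly smaller than what $n$ itself moves. Iterating this contraction and organising the resulting elements via $G$-conjugation produces non-trivial elements of $N$ lying in individual $V_j(d)$'s for every $j$ and every $d$.

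The main obstacle is the iterative contraction step: verifying at each stage that the commutator is non-trivial and that its support strictly shrinks relies crucially on the disjointness of supports in Proposition~\ref{prop:ProductDec}, combined with the fine control of parallelism developed in Section~2 (particularly Proposition~\ref{prop:CriterionParallelRAB}). The concluding step --- that having non-trivial elements in every $V_j(d)$ actually yields chamber-transitivity of $N$ --- is then achieved by exhibiting, for a gallery $c_0, c_1, \dots, c_k$, elements of $N$ realising each adjacency $c_j \rightsquigarrow c_{j+1}$; these come from suitably chosen $G$-conjugates combined with the swap-realisation of Proposition~\ref{prop:Panel:extension}.
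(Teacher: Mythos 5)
Your route diverges from the paper's, and it has genuine gaps. The paper's proof is a one-liner of a different nature: strong transitivity of $\Aut(X)^+$ (Proposition~\ref{prop:StrongTrans}) equips the group with a $BN$-pair, and the transitivity of non-trivial normal subgroups is then exactly Proposition~2.5 of \cite{Tits64}, whose hypotheses (thickness, irreducible type) are those of the corollary. The result you propose to use as a black box does not exist in the form you state: what \cite{Tits:arbres} proves is a \emph{simplicity} theorem for the subgroup generated by pointwise fixators of half-trees in a group with property~(P) acting on a tree without invariant subtree or end; it is not a statement that every non-trivial normal subgroup of a chamber-transitive group with~(P) is chamber-transitive, and in any case it is a theorem about trees, so the ``dictionary'' you set up is not a formal reduction --- the adaptation is precisely what would have to be proved, and your sketch of it is where the problems lie.

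Two steps of the sketch do not go through as described. First, the ``iterated contraction'': producing non-trivial elements of $N$ inside individual groups $V_j(d)$ requires the commutator mechanism of Lemma~\ref{lem:TitsCommutator}, which needs an element $g$ of $N$ in a special nested position ($g(c)$ $j$-adjacent to $c'$ with $m_{i,j}=\infty$); in the paper such a $g$ is found \emph{using} Corollary~\ref{cor:TitsTrans}, i.e.\ using the very transitivity you are trying to prove. Starting from an arbitrary non-trivial $n\in N$, the commutator $[n,v]$ is merely an element supported in $X_i(c)\cup n(X_i(c))$ (and you must still argue it is non-trivial); a further commutator of such elements is again supported in a union of wings, and no argument is given that the supports genuinely shrink to a single wing, let alone that one lands in some $V_j(d)$. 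Second, the concluding step fails outright: a non-trivial element of $V_j(d)$ fixes the panel $\Res_j(d)$ \emph{pointwise} (its support is $X_j(d)$, which meets $\Ch(\Res_j(d))$ only in $d$), so having non-trivial elements of $N$ in every $V_j(d)$ does not let you realise an adjacency $c_j\rightsquigarrow c_{j+1}$; and Proposition~\ref{prop:Panel:extension} supplies permutations realised by elements of $G=\Aut(X)^+$, not of $N$, so it cannot close this gap. Passing from ``$N$ contains wall-residue fixators'' to transitivity-type statements is non-trivial --- in the paper it is handled by the $BN$-pair machinery and the rank induction in the proof of Theorem~\ref{thm:simple}, after Corollary~\ref{cor:TitsTrans} is already available. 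The clean fix is simply to argue as the paper does: cite Proposition~\ref{prop:StrongTrans} and Tits' transitivity lemma \cite[Prop.~2.5]{Tits64}.
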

\begin{proof}
Since $\Aut(X)^+$ is strongly transitive by Proposition~\ref{prop:StrongTrans}, this follows from Proposition~2.5 in \cite{Tits64}.
\end{proof}

In case when $X$ is locally finite, the strong transitivity guaranteed by Proposition~\ref{prop:StrongTrans} is already enough to ensure that the intersection of all non-trivial closed normal subgroups of $\Aut(X)^+$ is non-trivial, topologically simple and cocompact, see \cite[Corollary~3.1]{CM}. This is of course a much weaker conclusion than Theorem~\ref{thm:simple}.

\section{Simplicity of the automorphism group}

The following result is established by a similar argument as in Tits' commutator lemma (Lemma~4.3 in \cite{Tits:arbres} or Lemma~6.2 in \cite{HP:simple}).

\begin{lem}\label{lem:TitsCommutator}
Let $X$ be a right-angled building of type $(W, I)$. Let $\sigma$ be a panel of type $i \in I$, let $c, c' \in \Ch(\sigma)$ be two distinct chambers, and let $g \in \Aut(X)^+$ be such that $g(c)$ is $j$-adjacent to $c'$, for some $j \in I$ with $m_{i, j} = \infty$. 

Then, for each element $h \in  \prod_{d \in \Ch(\sigma) \setminus \{c, c'\}} V_i(d)$, there exists $x \in \Aut(X)^+$ such that 
$h = [x, g] = x g x\inv g\inv$.
\end{lem}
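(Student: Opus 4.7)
The plan is to follow Tits' classical commutator trick (as in \cite{Tits:arbres} and \cite{HP:simple}) and realise $x$ as the infinite product $x = \prod_{n \geq 0} g^n h g^{-n}$. Once we verify that the conjugates $g^n h g^{-n}$ have pairwise disjoint supports, Lemma~\ref{lem:InfProd} (applied to the cyclic subgroups $\langle g^n h g^{-n}\rangle$) will produce $x$ as a well-defined element of $\Aut(X)^+$, and the commutator identity $[x,g] = h$ will follow immediately from the telescoping relation $x = h \cdot (g x g^{-1})$, since disjointness of supports gives $g x g^{-1} = \prod_{n \geq 1} g^n h g^{-n}$.

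Set $T := \Ch(X) \setminus (X_i(c) \cup X_i(c'))$, the disjoint union of the wings $X_i(d)$ for $d \in \Ch(\sigma) \setminus \{c, c'\}$, so that $\supp h \subseteq T$. I would establish two inclusions: (a) $g(T) \subseteq X_i(c')$, and (b) $g(X_i(c')) \subseteq X_i(c')$. By induction these yield $g^n(T) \subseteq X_i(c')$ for every $n \geq 1$; hence the sets $T, g(T), g^2(T), \ldots$ are pairwise disjoint, and so are the supports of the conjugates $g^n h g^{-n}$.

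The crux of both (a) and (b) is a projection computation inside the $\{i,j\}$-residue containing the $j$-panel shared by $g(c)$ and $c'$. Since $m_{i,j} = \infty$, this residue is of infinite-dihedral type, and a direct distance computation inside it shows that for every $d \in \Ch(g(\sigma)) \setminus \{g(c)\}$ one has $\proj_\sigma(d) = c'$ (so $d \in X_i(c')$) and $\proj_{g(\sigma)}(c') = g(c) \neq d$ (so $c' \notin X_i(d)$). Lemma~\ref{lem:inclusion}(a) then gives $X_i(d) \subseteq X_i(c')$ for each such $d$. Partitioning $g(T) = \bigsqcup_{d \in \Ch(g(\sigma)) \setminus \{g(c), g(c')\}} X_i(d)$ yields (a); for (b), specialising to $d = g(c')$ gives $g(c') \in X_i(c')$, while the symmetric observation that $g^{-1}(c')$ is $j$-adjacent to $c$, hence lies in $X_i(c)$, gives $c' \notin g(X_i(c'))$, so that one more application of Lemma~\ref{lem:inclusion}(a) concludes. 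The main obstacle is this projection identity, which depends crucially on the hypothesis $m_{i,j} = \infty$: this is precisely what ensures that the $\{i,j\}$-residue is of infinite-dihedral type, so that $c'$ is the unique closest chamber of $\sigma$ to every $d \in \Ch(g(\sigma)) \setminus \{g(c)\}$.
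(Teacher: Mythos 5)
Your proposal is correct and follows essentially the same route as the paper: Tits' commutator trick, with $x = \prod_{n\geq 0} g^n h g^{-n}$ made sense of via Lemma~\ref{lem:InfProd} after showing the translates of the support are pairwise disjoint, the disjointness coming from wing inclusions obtained through Lemma~\ref{lem:inclusion} and the hypothesis $m_{i,j}=\infty$. The only cosmetic differences are that you establish the one-step inclusions $g(T)\subseteq X_i(c')$ and $g(X_i(c'))\subseteq X_i(c')$ by an explicit projection computation in the $\{i,j\}$-residue and then iterate, whereas the paper asserts the inclusion $X_i(d)\subset X_i(c_m)$ for all $m>n$ directly and conjugates the whole group $V_0$ rather than the single element $h$.
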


\begin{proof}
Let $V_0 =  \prod_{d \in \Ch(\sigma) \setminus \{c, c'\}} V_i(d)$, and remark that $V_0 \leq G$ by Proposition~\ref{prop:ProductDec}. 
For each $n \geq 0$, we also set $\sigma_n = g^n(\sigma)$, $c_n = g^n(c)$,  $c'_n = g^n(c')$ and $V_n = g^n V_0 g^{-n}$. 

For each $n \geq 0$,  the support of $V_n$ is contained in $\bigcup_{d \in \Ch(\sigma_n) \setminus \{c_n, c'_n\}} X_i(d)$. Given $d \in \Ch(\sigma_n) \setminus \{c_n, c'_n\} $ and $m > n$, we have $d \in X_i(c_m)$ and $c_m \not \in X_i(d)$. Therefore $X_i(d) \subset X_i(c_m)$ by Lemma~\ref{lem:inclusion}. This implies that the sets 
$\bigcup_{d \in \Ch(\sigma_n) \setminus \{c_n, c'_n\}} X_i(d)$ 
and 
$\bigcup_{d \in \Ch(\sigma_m) \setminus \{c_m, c'_m\}} X_i(d)$
are disjoint. In other words, we have shown that  for $m > n \geq 0$, the subgroups $V_m$ and $V_n$ have disjoint support. By Lemma~\ref{lem:InfProd}, it follows that the product $V = \prod_{n \geq 0} V_n$ is a subgroup of $G$. Moreover, we have $gV_ng^{-1} = V_{n+1}$ for all $n \geq 0$. 

Given any $h \in V_0$, we set  $x_n = g^n h g^{-n}$ for all $n \geq 0$. Then the tuple $x = (x_n)_{n \geq 0}$ is an element of $V \leq G$. So is thus the commutator $[x, g]$. Moreover, denoting by $y_n$ the $n^{\mathrm{th}}$ component  of an element $y \in V$  according to the  decomposition $V = \prod_{n \geq 0} V_n$, we have $[x, g]_n = x_n (g x\inv g\inv)_n$ for all $n \geq 0$. Hence  $[x, g]_0=h$ and $[x, g]_n = x_n g x_{n-1}\inv g\inv = x_n x_n\inv = 1$ for all $n>0$. Thus $[x,g] = h$, as required. 
\end{proof}

We record the following consequence of Lemma~\ref{lem:TitsCommutator}, which is a crucial ingredient for the proof of Theorem~\ref{thm:simple}. 

\begin{lem}\label{lem:normal}
Let $X$ be a right-angled building of type $(W, I)$.  Assume that the Coxeter system $(W, I)$ is irreducible and that $X$ is thick. 

Then for any wall-residue $R$,   every non-trivial normal subgroup of $\Aut(X)^+$ contains  
$\Fix_{\Aut(X)^+}(R)$. 
\end{lem}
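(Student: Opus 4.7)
The plan is to combine the transitivity of non-trivial normal subgroups (Corollary~\ref{cor:TitsTrans}) with Tits' commutator Lemma~\ref{lem:TitsCommutator}, using the product decomposition of $\Fix_G(R)$ from Proposition~\ref{prop:ProductDec}. Write $R = \overline{\sigma}$ where $\sigma$ is an $i$-panel of a chamber $c_0$. If $I = \{i\}$ then $\Fix_G(\overline\sigma)$ is trivial and there is nothing to prove, so we may assume $|I| \geq 2$; irreducibility of $(W,I)$ together with right-angledness (so $m_{k,\ell}\in\{2,\infty\}$) then ensures that the Coxeter diagram is connected, hence there exists $j\in I$ with $m_{i,j}=\infty$.

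Let $N \trianglelefteq \Aut(X)^+$ be non-trivial. By Corollary~\ref{cor:TitsTrans}, $N$ acts transitively on $\Ch(X)$. Fix any two distinct chambers $c, c' \in \Ch(\sigma)$. Because $X$ is thick, the $j$-panel of $c'$ contains a chamber $c'' \neq c'$, which is therefore $j$-adjacent to $c'$. By transitivity of $N$, pick $g \in N$ with $g(c) = c''$. This $g$ satisfies the hypotheses of Lemma~\ref{lem:TitsCommutator}, so for every $h \in \prod_{d \in \Ch(\sigma)\setminus\{c,c'\}} V_i(d)$ there exists $x \in \Aut(X)^+$ with $h = [x,g] = (xgx^{-1})g^{-1}$. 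Since $N$ is normal, $xgx^{-1} \in N$, and hence $h \in N$. Thus
$$\prod_{d \in \Ch(\sigma)\setminus\{c,c'\}} V_i(d) \;\subseteq\; N.$$

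It remains to absorb the two missing factors $V_i(c)$ and $V_i(c')$ and then assemble the whole product. Since the building is thick, $|\Ch(\sigma)| \geq 3$, so we can pick a third chamber $c''' \in \Ch(\sigma) \setminus \{c,c'\}$. Applying the previous step to the two pairs $\{c,c'\}$ and $\{c,c'''\}$ (respectively $\{c',c'''\}$), the resulting subgroups jointly contain $V_i(d)$ for every $d \in \Ch(\sigma)$. Finally, any element of $\prod_{d \sim_i c_0} V_i(d)$ (in the sense of Lemma~\ref{lem:InfProd}, as in the proof of Proposition~\ref{prop:ProductDec}) can be written as the commuting product of an element of $\prod_{d \notin \{c,c'\}} V_i(d) \subseteq N$ and single-factor elements from $V_i(c) \subseteq N$ and $V_i(c') \subseteq N$, since the supports of the factors are pairwise disjoint. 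Hence $\prod_{d \sim_i c_0} V_i(d) \subseteq N$, which by Proposition~\ref{prop:ProductDec} is exactly $\Fix_{\Aut(X)^+}(R)$.

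The main obstacle is the step involving Lemma~\ref{lem:TitsCommutator}: one must check that an element of $N$ (not merely of $\Aut(X)^+$) can be chosen with the precise adjacency property $g(c) \sim_j c'$, which is why both the existence of $j$ with $m_{i,j}=\infty$ (from irreducibility) and the chamber-transitivity of $N$ (from Corollary~\ref{cor:TitsTrans}) are essential. The remaining issue — passing from the individual factors $V_i(d) \subseteq N$ to the full (possibly infinite) direct product — is harmless because the decomposition of an element across disjointly supported commuting factors can be performed by splitting off only two factors at a time.
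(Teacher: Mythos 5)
Your proposal is correct and takes essentially the same route as the paper: Corollary~\ref{cor:TitsTrans} supplies $g \in N$ with $g(c)$ $j$-adjacent to $c'$ for some $j$ with $m_{i,j}=\infty$ (coming from irreducibility), Lemma~\ref{lem:TitsCommutator} then places $\prod_{d \in \Ch(\sigma)\setminus\{c,c'\}} V_i(d)$ inside $N$, varying the pair $\{c,c'\}$ (using thickness) captures every factor $V_i(d)$, and Proposition~\ref{prop:ProductDec} assembles $\Fix_{\Aut(X)^+}(R)$. Your explicit treatment of the degenerate case $|I|=1$ and of splitting an element of the full product into finitely many commuting, disjointly supported pieces merely spells out steps the paper leaves implicit.
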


\begin{proof}
Let $N < \Aut(X)^+$ be a non-trivial normal subgroup. 

Let $\sigma$ be a panel of type $i \in I$ and $R = \overline \sigma$ be the corresponding wall-residue. Choose two distinct chambers $c, c' \in \Ch(\sigma)$.  Since $(W, I)$ is irreducible and non-spherical, there exist $j \in I $ such that $m_{i, j} = \infty$. By Corollary~\ref{cor:TitsTrans}, there is $g \in N$ such that $g(c)$ is $j$-adjacent to $c'$. In view of Lemma~\ref{lem:TitsCommutator}, we deduce that $\prod_{d \in \Ch(\sigma) \setminus \{c, c'\}} V_i(d)$ is contained in 
$N$. 

Since the latter holds for all pairs $\{c, c'\}   \subset \Ch(\sigma)$ and since $X$ is thick, we deduce that $V_i(c)$ and $V_i(c')$ are also contained in $N$. Therefore, so is $\Fix_{\Aut(X)^+}(R)$ by Proposition~\ref{prop:ProductDec}.
\end{proof}

We are now ready to complete the proof of simplicity.

\begin{proof}[Proof of Theorem~\ref{thm:simple}]
Let $N \neq 1$ be a non-trivial normal subgroup of $G = \Aut(X)^+$. By Corollary~\ref{cor:TitsTrans}, the group $N$ is transitive on $\Ch(X)$. Since $G$ is strongly transitive on $X$, it is naturally endowed with a $BN$-pair. Therefore, if we show that $N$ contains the full stabiliser $\Stab_G(R)$ of some residue $R$, then it will follow from \cite[Proposition~2.2]{Tits64} that $N$ itself is the stabiliser of some residue. The transitivity of $N$ on $\Ch(X)$ forces that residue to be the whole building $X$, whence $N = G$ as required. Therefore, the desired conclusion will follow provided we show that $N $ contains $\Stab_G(R)$ of some residue $R$. This is the final of the following series of claims.

\begin{claim}\label{cl:1}
For any residue $R$ of irreducible type, the stabiliser $\Stab_N(R)$ maps surjectively to $\Aut(R)^+$.
\end{claim}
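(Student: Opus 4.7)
The plan is to show that the image $H_R := \phi(\Stab_N(R))$ of the restriction homomorphism $\phi \colon \Stab_G(R) \to \Aut(R)^+$ is all of $\Aut(R)^+$. By Proposition~\ref{prop:extension}, $\phi$ is surjective with kernel $\Fix_G(R)$, and since $N$ is normal in $G$, the subgroup $\Stab_N(R) = N \cap \Stab_G(R)$ is normal in $\Stab_G(R)$; hence $H_R$ is a normal subgroup of $\Aut(R)^+$.

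First I would show that $H_R$ is chamber-transitive on $R$. By Corollary~\ref{cor:TitsTrans}, $N$ acts transitively on $\Ch(X)$. Given $c, c' \in \Ch(R)$, pick $n \in N$ with $n(c) = c'$; since $n$ is type-preserving and $R$ is the unique $J$-residue through each of its chambers, $n$ automatically stabilises $R$, so $n \in \Stab_N(R)$. In particular $H_R$ is chamber-transitive on $R$, and thus non-trivial.

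To conclude $H_R = \Aut(R)^+$, I would proceed by induction on the rank of $R$, taking Theorem~\ref{thm:simple} for thick right-angled buildings with strictly fewer generators as the inductive hypothesis. If $R$ has rank at least $2$ and irreducible type, then $R$ is itself a thick semi-regular right-angled building of irreducible non-spherical type of smaller rank, so by induction $\Aut(R)^+$ is abstractly simple; the non-trivial normal subgroup $H_R$ must then coincide with $\Aut(R)^+$.

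The main obstacle is the rank-$1$ case, where $R = \sigma$ is a panel and $\Aut(R)^+ = \Sym(\Ch(\sigma))$ is not simple, so the normal transitive subgroup $H_R$ need not a priori coincide with the full symmetric group (it could, for instance, be the alternating subgroup in the finite-thickness case). To settle this case I would upgrade chamber-transitivity to $2$-transitivity by applying the same argument to chamber stabilisers inside $N$, and then exhibit an odd permutation in $H_R$ by combining Proposition~\ref{prop:Panel:extension} (to lift an explicit transposition of $\Ch(\sigma)$) with the commutator Lemma~\ref{lem:TitsCommutator} applied to an element $g \in N$ arising from transitivity and the existence of some $j \in I$ with $m_{i,j} = \infty$.
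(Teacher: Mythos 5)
Your treatment of the higher-rank case is essentially the paper's argument: the image of $\Stab_N(R)$ in $\Aut(R)^+$ is a non-trivial normal subgroup (non-trivial by chamber-transitivity of $N$, normal because $\Stab_G(R)\to\Aut(R)^+$ is onto by Proposition~\ref{prop:extension}), and induction on the rank gives simplicity of $\Aut(R)^+$ for proper irreducible (hence non-spherical) residues of rank $\geq 2$. The genuine problem is your rank-one case. Theorem~\ref{thm:simple} makes no local finiteness assumption: the thicknesses $q_i$ may be infinite cardinals, so $\Aut(\sigma)^+=\Sym(\Ch(\sigma))$ with $\Ch(\sigma)$ possibly infinite. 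For an infinite set, a normal subgroup of the full symmetric group can be $2$-transitive (indeed highly transitive) and contain odd permutations without being the whole group --- the finitary symmetric group is exactly such a subgroup. So ``$2$-transitivity plus an odd permutation'' cannot close the rank-one case in the stated generality; it would at best work when all $q_i$ are finite.

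The mechanics you propose are also shaky even in the finite case. Corollary~\ref{cor:TitsTrans} is Tits' transitivity lemma for \emph{normal} subgroups of the strongly transitive group; chamber stabilisers inside $N$ are not normal in $G$, so ``the same argument'' does not upgrade transitivity to $2$-transitivity of the image. And Lemma~\ref{lem:TitsCommutator} produces elements of $\prod_{d \in \Ch(\sigma)\setminus\{c,c'\}} V_i(d)$, which fix $\Ch(\sigma)$ pointwise, so it cannot by itself exhibit a non-trivial (let alone odd) permutation of the panel inside $N$; lifting a transposition via Proposition~\ref{prop:Panel:extension} only gives an element of $G$, and you have no mechanism to push it into $N$. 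The paper avoids all of this: since $(W,I)$ is irreducible, the panel $R$ lies in a rank-two residue $R'$ of non-spherical type, which is a semi-regular tree; by the already-proven higher-rank case, $\Stab_N(R')$ surjects onto $\Aut(R')^+$, and in a semi-regular tree the stabiliser of a vertex in $\Aut(R')^+$ induces the \emph{full} symmetric group on the edges at that vertex, i.e.\ on $\Ch(R)$. Composing the two surjections settles the rank-one case for arbitrary thickness. You should replace your rank-one argument by this (or some other device that produces permutations of infinite support).
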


In order to prove the claim, we first observe that given two chambers $c, c' \in \Ch(R)$, any element of $G$ mapping $c$ to $c'$ must stabilise $R$. Since $N$ is chamber-transitive, it follows that for any residue $R$, the image of $N \cap \Stab_G(R)$ in $\Aut(R)^+$ is non-trivial.

In case $R$ is a proper residue of irreducible non-spherical type, we infer by induction on the rank that $\Aut(R)^+$ is simple; notice that the base  of the induction is provided by \cite{Tits:arbres}, which settles the case of trees. Since moreover the homomorphism of $\Stab_G(R)$ in $\Aut(R)^+$ is surjective by Proposition~\ref{prop:extension}, it follows that it remains surjective in restriction to $N \cap \Stab_G(R)$. In other words, we have shown that $\Stab_N(R)$ maps surjectively to $\Aut(R)^+$ for any proper irreducible non-spherical residue. 

Assume now that $R$ is spherical. Thus $R$ is of rank one. Since $(W, I)$ is irreducible, it follows that $R$ is incident with a non-spherical residue $R'$ of rank two. From the part of the claim which has already been proven, we deduce that $\Stab_N(R')$ maps surjectively to $\Aut(R')^+$. Notice that $R'$, viewed as a building in its own right, is a semi-regular tree, in which the residue $R$ corresponds to the set of edges emanating from a fixed vertex. It follows that the canonical map $\Stab_{\Aut(R')^+}(R) \to \Aut(R) = \Aut(R)^+$ is surjective. Therefore, so is the map $\Stab_N(R) \to  \Aut(R) = \Aut(R)^+$. The claim stands proven.

\begin{claim}\label{cl:2}
For any $i \in I$ and any residue $R$ of type $i \cup i^\perp$, the group $\Fix_G(R)$ is contained in $N$. 
\end{claim}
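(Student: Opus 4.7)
The plan is essentially to observe that Claim~\ref{cl:2} is an immediate reformulation of Lemma~\ref{lem:normal}, applied to the normal subgroup $N$ which we have already shown to be non-trivial. The only thing to verify is that the hypothesis of that lemma is satisfied, namely that a residue $R$ of type $i \cup i^\perp$ is indeed a wall-residue in the sense introduced just before Lemma~\ref{lem:BasicWings}: a wall-residue is a residue of the form $\overline\sigma$ for some panel $\sigma$. So first I would pick any $i$-panel $\sigma$ contained in $R$; such a panel exists because $R$ is of type containing $i$. Then by definition $\overline\sigma$ is the unique residue of type $i \cup i^\perp$ containing $\sigma$, so $\overline\sigma = R$, which exhibits $R$ as a wall-residue.

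With this identification, the claim follows by direct application: the assumptions of Lemma~\ref{lem:normal} (namely that $(W,I)$ is irreducible and $X$ is thick) are precisely the standing hypotheses of Theorem~\ref{thm:simple}, and $N$ is a non-trivial normal subgroup of $G = \Aut(X)^+$ by construction, so Lemma~\ref{lem:normal} yields $\Fix_G(R) = \Fix_{\Aut(X)^+}(\overline\sigma) \leq N$, as required.

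There is essentially no obstacle here: the work has already been done in Lemma~\ref{lem:normal}, whose proof relied on Tits' commutator lemma (Lemma~\ref{lem:TitsCommutator}), on Corollary~\ref{cor:TitsTrans} to produce an element $g \in N$ realising the required $j$-adjacency (using irreducibility and non-sphericity of $(W,I)$ to find $j$ with $m_{i,j} = \infty$), and on Proposition~\ref{prop:ProductDec} to decompose $\Fix_G(\overline\sigma)$ as the product of the groups $V_i(d)$ over the chambers $d$ of $\sigma$. The only subtlety worth flagging is the translation between the two ways of indexing wall-residues (by a panel $\sigma$ versus by the type $i \cup i^\perp$), but this is purely terminological.
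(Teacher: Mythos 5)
Your proposal is correct and matches the paper exactly: the paper's proof of this claim is simply the citation "This was established in Lemma~\ref{lem:normal}", and your verification that a residue of type $i \cup i^\perp$ is a wall-residue $\overline\sigma$ (by taking $\sigma$ to be the $i$-panel of any chamber of $R$) is the same purely terminological translation, with the hypotheses of Lemma~\ref{lem:normal} supplied by the standing assumptions of Theorem~\ref{thm:simple}.
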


This was established in Lemma~\ref{lem:normal}. 

\begin{claim}\label{cl:3}
Let $J = J_0\cup  J_1 \cup  \dots \cup J_s \subsetneq I$ be the disjoint union of pairwise commuting subsets such that $(W_{J_i}, J_i)$ is irreducible non-spherical for all $i>0$ and $(W_{J_0}, J_0)$ is spherical (and possibly reducible or trivial). Let $c \in \Ch(X)$ and $R = \Res_J(c)$ be its $J$-residue. 

If $\Fix_G(R)$ is contained in $N$, then so is $\Fix_G(\Res_{J_0}(c))$. 
\end{claim}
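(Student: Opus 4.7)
The plan is to proceed by induction on $s$, the number of irreducible non-spherical factors of $J$. The case $s=0$ is trivial, since then $J=J_0$ and $R=\Res_{J_0}(c)$. For the inductive step, set $J'=J_0\cup J_1\cup\dots\cup J_{s-1}$ and $K=J_s$, and write $R'=\Res_{J'}(c)$ and $R_K=\Res_K(c)$. The decomposition of $J'$ fits the hypothesis of Claim~\ref{cl:3} with $s-1$ non-spherical factors, so it suffices to deduce $\Fix_G(R')\subseteq N$ from $\Fix_G(R)\subseteq N$; the induction hypothesis then yields $\Fix_G(\Res_{J_0}(c))\subseteq N$.

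Since $[J',K]=1$ and $J'\cap K=\varnothing$, an iterated application of Lemma~\ref{lem:ProductRes} yields $\Ch(R)=\Ch(R')\times\Ch(R_K)$ and $\Aut(R)^+\cong\Aut(R')^+\times\Aut(R_K)^+$. Any $g\in\Fix_G(R')$ fixes $c$ and hence stabilises $R_K$, so restriction to $R_K$ defines a homomorphism $\phi\colon\Fix_G(R')\to\Aut(R_K)^+$ whose kernel is exactly $\Fix_G(R)$. Lifting automorphisms of the form $(1,\alpha)\in\Aut(R)^+$ via Proposition~\ref{prop:extension} shows that $\phi$ is surjective. Since $K$ is irreducible non-spherical with $|K|<|I|$, the induction on rank (for Theorem~\ref{thm:simple}) gives that $\Aut(R_K)^+$ is simple. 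The subgroup $M:=\phi(N\cap\Fix_G(R'))$ is normal in $\Aut(R_K)^+$ because $N\cap\Fix_G(R')$ is normal in $\Fix_G(R')$. Once $M\neq 1$ is established, simplicity forces $M=\Aut(R_K)^+$, hence $(N\cap\Fix_G(R'))\cdot\Fix_G(R)=\Fix_G(R')$, and combined with $\Fix_G(R)\subseteq N$ this gives $\Fix_G(R')\subseteq N$, as required.

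The main obstacle is proving that $M$ is non-trivial. Fix any $i\in K$. Since $J'$ commutes with $K$ and is disjoint from it, $J'\subseteq i^\perp$, so $\Res_{i\cup i^\perp}(c)\supseteq R'$ and consequently
$$\Fix_G(\Res_{i\cup i^\perp}(c))\subseteq\Fix_G(R')\cap N,$$
where the inclusion into $N$ is Claim~\ref{cl:2}. By Proposition~\ref{prop:ProductDec}, this fixator contains $V_i(c)$. As $K$ is irreducible non-spherical of rank at least $2$, there exists $j\in K$ with $m_{i,j}=\infty$. Applying Proposition~\ref{prop:Panel:extension} to a non-trivial permutation of $\Ch(\Res_j(c))$ fixing $c$ produces an automorphism $\tilde\alpha$ acting non-trivially on $\Res_j(c)\subseteq R_K$. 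The essential point is that $\tilde\alpha$ lies in $V_i(c)$: using Lemma~\ref{lem:concat} together with the disjointness of the $i$- and $j$-walls at $c$ in any apartment (guaranteed by $m_{i,j}=\infty$), one checks that every chamber outside $X_i(c)$ projects onto $\Res_j(c)$ to the chamber $c$, so it is fixed by $\tilde\alpha$. Therefore $\phi(\tilde\alpha)\neq 1$, and $M \neq 1$ as needed.
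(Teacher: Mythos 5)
There is a genuine gap at the step ``$\phi$ is surjective''. Every element of $\Fix_G(R')$ fixes the chamber $c$ (since $c \in \Ch(R')$), so its restriction to $R_K=\Res_K(c)$ fixes $c$ as well; hence the image of your map $\phi\colon \Fix_G(R') \to \Aut(R_K)^+$ is contained in $\Stab_{\Aut(R_K)^+}(c)$, which is a \emph{proper} subgroup because $\Aut(R_K)^+$ is chamber-transitive and $R_K$ is thick. Your proposed justification fails for the same reason: in the identification $\Ch(R)\cong \Ch(R')\times\Ch(R_K)$ of Lemma~\ref{lem:ProductRes}, the chambers of $R'$ are exactly $\Ch(R')\times\{c\}$, and the extension of $(1,\alpha)$ furnished by Proposition~\ref{prop:extension} moves them to $\Ch(R')\times\{\alpha(c)\}$; it lies in $\Fix_G(R')$ only when $\alpha(c)=c$. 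Consequently $M=\phi(N\cap\Fix_G(R'))$ is normal only in $\Stab_{\Aut(R_K)^+}(c)$, not in $\Aut(R_K)^+$, and the conclusion ``$M\neq 1$ and simplicity force $M=\Aut(R_K)^+$'' collapses -- indeed $M\le \Stab_{\Aut(R_K)^+}(c)\subsetneq \Aut(R_K)^+$ always. What your induction step actually requires is $M\supseteq\Stab_{\Aut(R_K)^+}(c)$, i.e.\ that $N\cap\Fix_G(R')$ surjects onto the \emph{chamber stabiliser} in $\Aut(R_K)^+$; since that stabiliser is very far from simple (it has many proper normal subgroups, e.g.\ fixators of balls), non-triviality of $M$ gives essentially nothing, and your argument does not address this. (Your non-triviality argument via $V_i(c)\le \Fix_G(\Res_{i\cup i^\perp}(c))\le N\cap\Fix_G(R')$ and Claim~\ref{cl:2} is fine, but it is not the bottleneck.)

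The paper circumvents exactly this obstruction by never cutting down to the fixator of a subresidue: it works with $P=\Stab_G(R)$ and $U=\Fix_G(R)$, so that $P/U\cong \Aut(R)^+\cong L_0\times\dots\times L_s$ is the \emph{full} product, the image $N'$ of $N\cap P$ is normal in that whole product, and Claim~\ref{cl:1} (chamber-transitivity of $N$ plus the rank induction giving simplicity of each $L_j$, $j>0$) shows that $N'$ projects onto each non-spherical factor $L_j$. A commutator argument together with perfectness of the simple group $L_j$ then yields $\widetilde{L_j}\le N'$, and since $U\le N$ one pulls back $\{1\}\times L_1\times\dots\times L_s$ to get $\Fix_G(\Res_{J_0}(c))\le N$. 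If you want to keep your induction on $s$, you would still have to replace $\Fix_G(R')$ by $\Stab_G(R)$ (or at least by a group whose image in $\Aut(R_K)^+$ is all of it, using the chamber-transitivity of $N$ as in Claim~\ref{cl:1}); as written, the surjectivity and normality claims are false and the proof does not go through.
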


Set $P = \Stab_G(R)$ and  $U = \Fix_{G}(R)$.  By Proposition~\ref{prop:extension}, the quotient $P/U$ is isomorphic to $\Aut(R)^+$.  

For each $i = 0, \dots, s$, set $R_i = \Res_{J_i}(c)$. By Lemma~\ref{lem:ProductRes}, we have a canonical decomposition $\Ch(R) \cong \Ch(R_0) \times \dots \times \Ch(R_s)$, which  induces a corresponding product decomposition $\Aut(R)^+ \cong L_0 \times \dots \times L_s$, where $L_i = \Aut(R_i)^+$. Let $N'$ denote the image of $N$ in $\Aut(R)^+ \cong L = L_0 \times \dots \times L_s$ under the quotient map $P \to P/U$. Let also $\widetilde{L_j}$ denote the image of $L_j$ under the canonical embedding $L_j \to L$. 

Let $j>0$. Since $N'$ and $\widetilde L_j$ are both normal in $L$, we have $[N', \widetilde{L_j}] \leq N' \cap \widetilde{L_j}$. On the other hand, by Claim~\ref{cl:1}, the group $\Stab_N(R_j)$ maps surjectively to $L_j$. It follows that the  projection $\pi_j \colon L \to L_j$ remains surjective in restriction to $N'$. Therefore, we have 
$$
[L_j, L_j] = [\pi_j(N'), \pi_j(\widetilde{L_j})] = \pi_j( [N', \widetilde{L_j}]) \leq \pi_j(N' \cap \widetilde{L_j}) \leq \pi_j( \widetilde{L_j}) = L_j.
$$ 
Since $R_j$ is of non-spherical type, we know that $L_j$ is simple by induction on the rank, whence $L_j = [L_j, L_j]$ and $N' \cap \widetilde{L_j} = \widetilde{L_j}$. In other words, we have $ \widetilde{L_j} \leq N'$. This holds for all $j>0$; therefore $\{1\} \times L_1 \times \dots \times L_s$ is also contained in $N'$. 

 Recalling that $P$ fits in the short exact sequence 
$$ 1 \to U \to P \to  L_0 \times \dots \times L_s \to 1$$
and that $N$ contains $U$ by hypothesis, we deduce that $N$ contains the preimage of $\{1\} \times L_1 \times \dots \times L_s$ in $P$. This implies the claim, since the group 
$$\Fix_G(R_0) = \Ker(\Stab_G(R_0) \to \Aut(R_0)^+) \leq P$$
coincides with the preimage in $P$ of  $\{1\} \times \Stab_{L_1}(c) \times \dots \times \Stab_{L_s}(c)$.

\begin{claim}\label{cl:4}
$N$ contains the full stabiliser $\Stab_G(R)$ of some proper residue $R$. 
\end{claim}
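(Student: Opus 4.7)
The plan is to take $R = \{c\}$ for some chamber $c$, regarded as the residue $\Res_\emptyset(c)$ of type $\emptyset$, and to prove $\Stab_G(\{c\}) = \Fix_G(\{c\}) \subseteq N$. The strategy is to apply Claim~\ref{cl:3} with a subset $J \subsetneq I$ whose decomposition into pairwise commuting irreducible components contains no spherical component, so that the spherical part $J_0$ in the statement of Claim~\ref{cl:3} is empty.

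To apply Claim~\ref{cl:3}, I need $\Fix_G(\Res_J(c)) \subseteq N$ to begin with. Since fixators reverse inclusions of residues and Claim~\ref{cl:2} provides $\Fix_G(\Res_{i \cup i^\perp}(c)) \subseteq N$ for every $i \in I$, it suffices to arrange $i \cup i^\perp \subseteq J$. I plan to take $J = I - \{j\}$ for a well-chosen $j \in I$, and pick $i$ to be a neighbor of $j$ in the Coxeter diagram, i.e.\ an element with $m_{i,j} = \infty$. Such $i$ exists by the irreducibility of $(W, I)$, and then $j \not\in i \cup i^\perp$, so $i \cup i^\perp \subseteq I - \{j\} = J$.

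The substantive task is to choose $j$ so that every irreducible component of $I - \{j\}$ is non-spherical, which in the right-angled setting is equivalent to asking that no vertex of $I - \{j\}$ be isolated in the Coxeter diagram restricted to $I - \{j\}$; equivalently, no element $v \in I$ should have $j$ as its unique neighbor. The key graph-theoretic lemma I plan to establish is: \emph{in any connected graph on at least three vertices, there exists a vertex that is not the sole neighbor of any other vertex}. This follows from a short pigeonhole argument: otherwise, for each $j' \in I$ one could choose a vertex $l_{j'} \neq j'$ whose only neighbor is $j'$, and the assignment $j' \mapsto l_{j'}$ would inject $I$ into its set of leaves, forcing every vertex to have degree~$1$; but a connected graph with every vertex of degree~$1$ has exactly two vertices. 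The remaining case $|I| = 2$ corresponds to $W$ being infinite dihedral, that is, $X$ a semi-regular tree, and is the base of the induction on rank in the proof of Theorem~\ref{thm:simple}, covered by Tits' theorem.

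The main obstacle is the graph-theoretic lemma above; once it is in hand, the conclusion is immediate. Claim~\ref{cl:2} combined with the containment $i \cup i^\perp \subseteq I - \{j\}$ gives $\Fix_G(\Res_{I - \{j\}}(c)) \subseteq N$, and Claim~\ref{cl:3} applied to $J = I - \{j\}$, whose decomposition has spherical part $J_0 = \emptyset$ by construction, then yields $\Fix_G(\Res_\emptyset(c)) = \Stab_G(\{c\}) \subseteq N$, establishing the claim with $R = \{c\}$.
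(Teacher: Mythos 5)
Your proposal is correct, and it reaches the conclusion by a genuinely different route than the paper. The paper does not try to choose a good co-rank-one subset in advance: it observes (from Claims~\ref{cl:2} and~\ref{cl:3}) that there exist spherical types $J$ with $\Fix_G(\Res_J(c)) \leq N$, picks one of minimal cardinality, and runs a descent by contradiction: if $J \neq \varnothing$, choose $j \in J$ and $i \in I - J$ with $m_{i,j} = \infty$; when $J \cup \{i\} \subsetneq I$, applying Claim~\ref{cl:3} to the residue of type $J \cup \{i\}$ produces a spherical type strictly smaller than $J$, contradicting minimality, and when $J \cup \{i\} = I$ irreducibility forces $I = \{i,j\}$, i.e.\ the tree case, settled by Tits. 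You short-circuit this descent with an explicit choice of $j$: your pigeonhole lemma (in a finite connected graph on at least three vertices, some vertex is nobody's sole neighbour) guarantees that $J = I - \{j\}$ has no isolated vertex in the restricted diagram, hence empty spherical part $J_0$ (in the right-angled case the spherical components are exactly the isolated vertices); then one use of Claim~\ref{cl:2} through a neighbour $i$ of $j$, which gives $i \cup i^\perp \subseteq J$ and so $\Fix_G(\Res_J(c)) \leq \Fix_G(\Res_{i \cup i^\perp}(c)) \leq N$, followed by a single application of Claim~\ref{cl:3}, lands directly on $\Stab_G(c) = \Fix_G(\Res_\varnothing(c)) \leq N$, with $\{c\}$ the required proper residue. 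You split off $|I| = 2$ and invoke Tits, exactly as the paper does in its final case. What each approach buys: yours is shorter and purely combinatorial, with no minimality argument and no case distinction beyond $|I| = 2$, at the price of the (easy) graph lemma; the paper's descent needs no special vertex and proceeds uniformly from whatever spherical fixators Claims~\ref{cl:2} and~\ref{cl:3} provide. The only points you leave implicit are routine: every $v \in I - \{j\}$ retains a neighbour in $I - \{j\}$ because the diagram is connected and $|I| \geq 3$, and the irreducible components of $I - \{j\}$ pairwise commute, so Claim~\ref{cl:3} applies as stated with $J_0 = \varnothing$.
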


Since $(W, I)$ is irreducible non-spherical, we have $i \cup i^\perp \subsetneq I$ for all $i \in I$. 
From Claims~\ref{cl:2} and~\ref{cl:3}, we deduce that there exist spherical residues $R_0$ such that $\Fix_G(R_0)$ is contained in $N$. Amongst all such residues, we pick one, say $R$, whose type $J \subseteq I$ is of minimal possible cardinality. 

If $J = \varnothing$, then $R$ is   a single chamber. Thus $\Stab_G(R) = \Fix_G(R)$ is contained in $N$ and we are done. 

Assume next that $J$ is not empty and let $j \in J$. Since $(W, I)$ is irreducible, there exists $i \in I - J$ such that $m_{i, j} = \infty$. 
Now we  distinguish two cases. 

\medskip
Assume first that $J \cup\{i\}$ is properly contained in $I$. Let $R_i$  be the unique residue of type $J \cup \{i\}$ incident with $R$. Then we have $N \geq \Fix_G(R) \geq \Fix_G(R_i)$. Let $R_i = R_0 \times Q_1 \times \dots \times Q_s$ be the decomposition of $R_i$ into a maximal spherical factor $R_0$ and a number of irreducible non-spherical factors. By Claim~\ref{cl:3}, we have $\Fix_G(R_0) \leq N$. By construction $R_i$ is not spherical and is incident to $R$. Therefore the type of $R_0$ is a proper subset of $J$. This contradicts the minimality property of $R$, hence the present case does not occur. 

\medskip
Assume finally that $I = J \cup \{i\}$. Since $(W, I)$ is irreducible, it follows that $m_{i, j'} = \infty$ for all $j' \in J$. In other words, we have $i^\perp = \varnothing$. Therefore, by Claim~\ref{cl:2} we have $\Fix_G(S) \leq N$ for any $i$-residue $S$. It follows from the minimality assumption on $R$ that $J$ has cardinality $1$ as well. Thus $I = \{i, j\}$ and $X$ is a tree, in which case the claim follows from the simplicity theorem in \cite{Tits:arbres}. 
\end{proof}

\section{Fixators of spherical residues}

We now turn to fixators of  spherical residues, i.e. residues whose type $J \subseteq I$ generates a finite subgroup of $W$. We restrict ourselves to the case where the ambient building $X$ is  locally finite case. We endow the group $\Aut(X)$ with the compact open topology; the latter coincides with the topology of pointwise convergence on the discrete set $\Ch(X)$. The group $\Aut(X)$ is locally compact and totally disconnected.

\begin{prop}\label{prop:Fix}
Let $X$ be a semi-regular, locally finite, right-angled building of type $(W, I)$. Let $R$ be a residue of spherical type $J \subseteq I$. Then we have
$$\Fix_{\Aut(X)^+}(R) = \overline{\la U_i(c) \; | \; c \in \Ch(R), \ i \in I - J \ra }.$$

\end{prop}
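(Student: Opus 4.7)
The inclusion $\supseteq$ follows at once from Corollary~\ref{cor:ResidueWing}: each generator $U_i(c)$ with $c \in \Ch(R)$ and $i \in I - J$ fixes the set $X_i(c) \supseteq \Ch(R)$ pointwise, so $U_i(c) \leq \Fix_G(R)$, and since the latter is closed in $G$ the closure of the subgroup they generate remains inside it.

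For the reverse inclusion, set $H = \la U_i(c) : c \in \Ch(R),\ i \in I - J \ra$ and write $B_n = B(R, n)$. Local finiteness makes $\Fix_G(R)$ a compact open subgroup of $G$ with the descending chain $(\Fix_G(B_n))_{n \geq 0}$ as a neighbourhood basis of the identity, so density of $H$ in $\Fix_G(R)$ amounts to the identities $H \cdot \Fix_G(B_n) = \Fix_G(R)$ for every $n \geq 0$. I argue this by induction on $n$. The case $n = 0$ is trivial. For the induction step, write $g \in \Fix_G(R)$ as $g = h f$ with $h \in H$ and $f \in \Fix_G(B_n)$ by the induction hypothesis; it suffices to produce $k \in H \cap \Fix_G(B_n)$ with $k|_{B_{n+1}} = f|_{B_{n+1}}$, for then $hk \in H$ matches $g$ on $B_{n+1}$.

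Since $f$ pointwise fixes $B_n$, prescribing $k|_{B_{n+1}}$ amounts to prescribing, on each panel $\sigma_s = \Res_{i_s}(y_s)$ with $y_s \in B_n$ at distance exactly $n$ from $R$ and $\sigma_s$ protruding into the sphere $B_{n+1} \setminus B_n$, a permutation $\pi_s$ of $\Ch(\sigma_s)$ fixing $y_s$. These permutations are realised simultaneously by applying Lemma~\ref{lem:U:residue} to $R$, $n$, the wall-residues $\overline{\sigma_s} = \Res_{i_s \cup i_s^\perp}(y_s)$ of the relevant panels (those lying at distance exactly $n$ from $R$), and witness chambers $c_s \in \Ch(R'_s)$ with $R'_s = \proj_{\overline{\sigma_s}}(R)$; the lemma outputs an element $k \in \la U_{i_s}(c_s) : s \ra$ that fixes $B_n$ pointwise and realises each $\pi_s$.

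The main obstacle is ensuring that each $U_{i_s}(c_s) \leq H$, since $c_s$ need not lie in $\Ch(R)$. When $R \cap \overline{\sigma_s} \neq \varnothing$ one picks $c_s \in R \cap \overline{\sigma_s} \subseteq R'_s$, and $U_{i_s}(c_s)$ is then a defining generator of $H$. Otherwise, let $c = \proj_R(c_s) \in \Ch(R)$ and let $j \in I - J$ be the type of the first panel crossed by a minimal gallery from $c$ to $c_s$; I claim the wing inclusion $X_j(c) \subseteq X_{i_s}(c_s)$ holds, from which $U_{i_s}(c_s) \leq U_j(c) \leq H$ follows via the correspondence $U_i(c') = \Fix_G(X_i(c'))$, which reverses containments. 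The wing inclusion is verified by passing to an apartment $\Sigma$ containing $c$ and $c_s$, invoking Lemma~\ref{lem:inclusion}(b), and performing a direct reduced-expression computation in the Coxeter group $W$: the facts that $\delta(c, c_s)$ begins with $s_j$ (by the choice of $j$) and does not end with $s_{i_s}$ (equivalently $c \in X_{i_s}(c_s)$, which follows from the way $c_s$ sits in $R'_s$) together force the required inclusion of half-apartments in $\Sigma$. This closes the induction and completes the proof.
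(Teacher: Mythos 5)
Your overall architecture is the paper's: the easy inclusion via Corollary~\ref{cor:ResidueWing}, and density proved ball-by-ball, using Lemma~\ref{lem:U:residue} to correct an element of $\Fix_G(B(R,n))$ on the sphere of radius $n+1$ and then arguing that the correcting generators $U_{i_s}(c_s)$ already lie in $H$. But two substantive steps are missing or wrong. First, you apply Lemma~\ref{lem:U:residue} without verifying its hypotheses: that $\dist(R,\overline{\sigma_s})=n$, that $c_s\in\Ch(R'_s)$, that $\Ch(R'_s)\subseteq X_{i_s}(c_s)$, and that the pairs $(\overline{\sigma_s},i_s)$ are pairwise distinct. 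These are not automatic for an arbitrary panel protruding from $B(R,n)$; in the paper (Claim~3 of its proof) they are \emph{derived from the element $f$ itself} -- one first shows $\proj_{\sigma_s}(R)$ is a single chamber because $f$ fixes $\Ch(R)$ but moves $\sigma_s$, then a distance computation gives $c_s\in\Ch(R'_s)$ and admissibility. Relatedly, Lemma~\ref{lem:U:residue} lets you prescribe $\pi_s$ only on one representative panel per parallelism class, and the output $k$ also disturbs the other panels $\Res_{i_s}(z)$, $z\in\Ch(R'_s)$; one must check (as the paper does, via $f$-equivariance of the projection bijections between parallel panels) that $k$ matches $f$ on those as well, so that $k^{-1}f$ really lies in $\Fix_G(B_{n+1})$. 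Non-admissible protruding panels are in fact fixed pointwise by $f$ for the same parallelism reason, but this needs to be said and proved.

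Second, your key reduction $U_{i_s}(c_s)\le H$ is justified by the assertion that ``$\delta(c,c_s)$ begins with $s_j$ and does not end with $s_{i_s}$'' forces $X_j(c)\subseteq X_{i_s}(c_s)$. That implication is false: take $m_{j,i_s}=2$ and $c_s$ the chamber $j$-adjacent to $c$; then $\delta(c,c_s)=s_j$ begins with $s_j$ and does not end with $s_{i_s}$, yet the chamber $i_s$-adjacent to $c$ lies in $X_j(c)\setminus X_{i_s}(c_s)$, because the $j$-wall of $c$ and the $i_s$-wall of $c_s$ cross. The inclusion you want does hold, but only after you add the hypothesis that the whole wall-residue $\overline{\sigma_s}=\Res_{i_s\cup i_s^\perp}(c_s)$ lies at distance exactly $n=\dist(c,c_s)$ from $R$ (equivalently $c_s\in\proj_{\overline{\sigma_s}}(R)$): if the two walls crossed, the reflection in the $j$-wall would stabilise $\overline{\sigma_s}$ and send $c_s$ to a chamber of $\overline{\sigma_s}$ at distance $n-1$ from $R$, a contradiction; once crossing is excluded, the positions of $c$, $c_s$ and the chamber $i_s$-adjacent to $c_s$ pin down the nesting and Lemma~\ref{lem:inclusion}(b) gives the wing inclusion in $X$. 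So your one-step descent (using the panel at the \emph{near} end of the gallery) can replace the paper's chamber-by-chamber induction in its Claim~2, which peels off the panel at the far end and proves $m_{i_s,j}=\infty$ there -- but only if you establish and invoke exactly the admissibility/distance data that your write-up omits; as stated, the argument has a genuine gap.
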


Specialising (i) to the case   $J = \varnothing$, we obtain 
$$\Stab_{\Aut(X)^+}(c)  = \overline{\la U_i(c) \; | \;  i \in I  \ra }$$
for any chamber $c \in \Ch(X)$.

\begin{proof}[Proof of Proposition~\ref{prop:Fix}]
Let $G = \Aut(X)^+$. For each $n \geq 0$, we set $G(n) = \Fix_G(B(R, n))$. 

Let $c \in \Ch(X)$ and $i \in I$. Set $\overline \sigma = \Res_{i \cup i^\perp}(c)$ and $R' = \proj_{\overline \sigma}(R)$. We say that the pair $(c, i)$ is \textbf{admissible} of $c \in \Ch(R')$ and $\Ch(R') \subseteq X_i(c)$. Now we set 
$$
U(n) =  \la U_i(c) \; | \; (c, i) \text{ is admissible and } \dist(c,R)= n  \ra.
$$
Notice that if $c \in \Ch(R)$, then $(c, i)$ is admissible if and only if $i \not \in J$. Indeed, since $c \in \Ch(R)$, we have $\Ch(R') = \Ch(R) \cap \Ch(\overline \sigma)$; in particular $c \in \Ch(R')$. Now, if $i \in J$, then $J \subset i \cup i^\perp$. Therefore $\Ch(\Res_i(c)) \subseteq \Ch(R) \subset \Ch(\overline \sigma)$ and $R= R'$; in particular $\Ch(R') \not \subseteq X_i(c)$. Conversely, if $  i \not \in J$, then $\Ch(R') \subseteq \Ch(R) \subseteq X_i(c)$ by Corollary~\ref{cor:ResidueWing}, so that $(c, i)$ is indeed admissible. 

This shows that $U(0) = \la U_i(c) \; | \; c \in \Ch(R), \ i \in I - J \ra $. We need to show that $G(0) = \overline{U(0)}$. This is the last of the following series of claims.

\setcounter{claim}{0}

\begin{claim}\label{cl:Fix1}
For all $n \geq 0$, we have $\overline{U(n)} \leq G(n)$. 
\end{claim}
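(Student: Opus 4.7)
The plan is to unpack the definitions and invoke Lemma~\ref{lem:balls:bis}(i) to see that each generator of $U(n)$ already lies in $G(n)$, and then use the fact that $G(n)$ is closed in the compact-open topology to conclude.

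First, I would take a generator $U_i(c)$ of $U(n)$, so $(c,i)$ is admissible with $\dist(c,R) = n$. By the admissibility condition, $c \in \Ch(R')$ where $R' = \proj_{\overline{\sigma}}(R)$ and $\Ch(R') \subseteq X_i(c)$. These are precisely the hypotheses of Lemma~\ref{lem:balls:bis}(i), so $B(R,n) \subseteq X_i(c)$. By the very definition of $U_i(c)$, every element of $U_i(c)$ fixes pointwise $X_i(c)$, hence a fortiori fixes $B(R,n)$ pointwise. Therefore $U_i(c) \leq \Fix_G(B(R,n)) = G(n)$.

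Since this holds for every generator of $U(n)$, we obtain $U(n) \leq G(n)$. Finally, $G(n)$ is the pointwise stabiliser of a set of chambers, which is closed in the topology of pointwise convergence on $\Ch(X)$ (i.e., the compact-open topology on $\Aut(X)$). Consequently $\overline{U(n)} \leq G(n)$, as required.

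I do not anticipate any real obstacle here: the work has already been done in Lemma~\ref{lem:balls:bis}, and the admissibility hypothesis in the definition of $U(n)$ was arranged exactly so as to make that lemma applicable. The only thing to check carefully is that the closure operation does not cause trouble, which follows from the standard fact that pointwise stabilisers of subsets of $\Ch(X)$ are closed.
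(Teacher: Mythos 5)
Your proof is correct and follows essentially the same route as the paper: apply Lemma~\ref{lem:balls:bis}(i) via the admissibility hypothesis to get $B(R,n) \subseteq X_i(c)$, conclude that each generator $U_i(c)$ lies in $G(n)$, and then pass to the closure using that $G(n)$ is a pointwise stabiliser and hence closed.
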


Indeed, let $(c, i)$ be an admissible pair with $\dist(c, R) = n$. Then $B(R, n) \subseteq X_i(c)$ by Lemma~\ref{lem:balls:bis}(i). Thus $U_i(c)$ fixes $B(R, n)$ pointwise, and hence $U(n) \leq G(n)$. The claim follows since $G(n)$ is closed.  

\begin{claim}\label{cl:Fix2}
For all $n \geq 0$, we have $U(n) \leq U(0)$.
\end{claim}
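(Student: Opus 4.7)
My plan is to prove Claim~\ref{cl:Fix2} by induction on $n$, the base case $n = 0$ being immediate from the definition. For the inductive step, take a generator $U_i(c)$ of $U(n)$ with $(c,i)$ admissible at distance $n > 0$ from $R$, and aim to show $U_i(c) \leq U(0)$. First, pick $c^* \in \Ch(R)$ with $\proj_{\overline{\sigma}}(c^*) = c$ (possible since $c \in \Ch(R')$) and a minimal gallery $c^* = d_0, d_1, \dots, d_n = c$ of types $k_1, \dots, k_n$. Because $c = \proj_{\overline{\sigma}}(c^*)$, the gallery enters $\overline{\sigma}$ only at its last chamber $c$, which forces the last type $k := k_n$ to satisfy $k \notin i \cup i^\perp$; in the right-angled setting this gives $m_{k,i} = \infty$. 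Lemma~\ref{lem:inclusion}(a) applied with the small wing $X_k(d_{n-1})$ and the large wing $X_i(c)$ then yields $X_k(d_{n-1}) \subseteq X_i(c)$, and consequently $U_i(c) \leq U_k(d_{n-1})$.

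The next step is to shift $d_{n-1}$ inside its wall-residue to a chamber closer to $R$. Set $\overline{\tau} := \Res_{k \cup k^\perp}(d_{n-1})$ and $e := \proj_{\overline{\tau}}(c^*)$. Minimality of the gallery forces $k_{n-1} \neq k$ (two consecutive $k$-steps would collapse inside a single $k$-panel), and the product decomposition of $\overline{\tau}$ provided by Lemma~\ref{lem:ProductRes} then shows $\proj_{\Res_k(d_{n-1})}(c^*) = d_{n-1}$, so $e \in \Res_{k^\perp}(d_{n-1})$. By Lemma~\ref{lem:BasicWings}(ii) this gives $X_k(e) = X_k(d_{n-1})$ and hence $U_k(e) = U_k(d_{n-1})$. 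Moreover $e \in \Ch(\proj_{\overline{\tau}}(R))$ by construction, and $\dist(e, R) \leq \dist(c^*, \overline{\tau}) \leq n - 1$.

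To invoke the induction hypothesis I then need to verify the second admissibility condition $\Ch(\proj_{\overline{\tau}}(R)) \subseteq X_k(e)$. By Lemma~\ref{lem:ParallelProj}, the type $T''$ of $\proj_{\overline{\tau}}(R)$ equals the type of its parallel partner $\proj_R(\overline{\tau}) \subseteq R$, so $T'' \subseteq J$; since also $T'' \subseteq k \cup k^\perp$, the decisive question is whether $k \in T''$. If $k \notin J$ this cannot happen and Lemma~\ref{lem:BasicWings}(iii) gives $\proj_{\overline{\tau}}(R) \subseteq \Res_{k^\perp}(e) \subseteq X_k(e)$, as required. If $k \in J$, the hypothesis that $J$ is spherical implies all elements of $J$ commute pairwise (this is where sphericality of $J$ in the right-angled setting bites), so $J \subseteq k \cup k^\perp$; one then uses the product structure of $R$ given by Lemma~\ref{lem:ProductRes} to factor out the $k$-coordinate and reduce to showing the analogous statement for a residue of type in $k^\perp \cap J$, ruling out $k \in T''$. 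With $(e,k)$ admissible at distance $\leq n-1$, the inductive hypothesis yields $U_k(e) \leq U(0)$, and hence $U_i(c) \leq U(0)$.

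The main obstacle is this last admissibility verification when $k \in J$: one must exclude the possibility that $\proj_R(\overline{\tau})$ contains a non-trivial $k$-panel, which would force $k \in T''$ and break the containment in $X_k(e)$. Handling this carefully, by combining the sphericality of $J$ with Proposition~\ref{prop:CriterionParallelRAB} to classify which $k$-panels of $R$ can be parallel to a panel of $\overline{\tau}$, and possibly by reselecting $c^*$ within the fibre of $R \to R'$ over $c$, is the crux of the argument.
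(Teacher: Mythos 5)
Your overall strategy is the same as the paper's: induct on $\dist(c,R)$, step one chamber along a minimal gallery towards $R$ through the panel $\sigma'$ of type $k$ adjacent to $c$, check $m_{i,k}=\infty$, deduce $X_k(d_{n-1})\subseteq X_i(c)$ and hence $U_i(c)\le U_k(d_{n-1})$ from Lemma~\ref{lem:inclusion}, and finally replace $d_{n-1}$ by a chamber of $\proj_{\overline{\sigma'}}(R)$ determining the same wing so as to apply the inductive hypothesis. (Your derivation of $m_{i,k}=\infty$ from the fact that the gallery meets $\overline\sigma$ only at its terminal chamber is fine, and arguably more direct than the paper's contradiction argument.) However, the point you yourself flag as ``the crux'' --- showing $\Ch(\proj_{\overline\tau}(R))\subseteq X_k(e)$, i.e.\ that $\proj_{\overline\tau}(R)$ contains no $k$-panel --- is a genuine gap as written: you settle only the case $k\notin J$, and for $k\in J$ you offer an uncompleted sketch via sphericality of $J$ and a possible reselection of $c^*$, which is not the right mechanism.

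The missing idea is that $c$ and $d_{n-1}$ have the \emph{same} projection on $R$. Indeed, a gallery of length $n=\dist(c,R)$ from $c^*\in\Ch(R)$ to $c$ forces $c^*=\proj_R(c)$; and since $\dist(d_{n-1},c^*)=n-1$ while $\dist(d_{n-1},R)\ge n-1$, also $\proj_R(d_{n-1})=c^*$. Hence $\proj_R(\sigma')=\{c^*\}$ is a single chamber (were it a panel, it would be parallel to $\sigma'$ and the projection would be injective on $\Ch(\sigma')$, contradicting that $c$ and $d_{n-1}$ both map to $c^*$), so by Lemma~\ref{lem:NestedProj} no panel of $R$ is parallel to $\sigma'$, and then by Lemma~\ref{lem:ParallelProj} together with Corollary~\ref{cor:ParallelEquivRel} no panel of $\proj_{\overline\tau}(R)$ is parallel to $\sigma'$ either. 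Since every $k$-panel of $\overline\tau$ is parallel to $\sigma'$ by Proposition~\ref{prop:CriterionParallelRAB}(ii), the residue $\proj_{\overline\tau}(R)$ has no $k$-panel and therefore lies in $\Res_{k^\perp}(e)\subseteq X_k(e)$, as required. This argument is uniform in $k$ and makes no use of the sphericality of $J$; once $c^*$ is taken to be $\proj_R(c)$ (the only chamber of $R$ at distance $n$ from $c$ in any case), the difficulty you describe disappears.
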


Let $(c, i)$ be an admissible pair with $\dist(c, R) = n$. We prove by induction on $n$ that $U_i(c) \leq U(0)$. The base case $n=0$ is clear; we assume henceforth that $n>0$. Let $x = \proj_R(c)$.  Let $c'$ be the first chamber on a minimal gallery from $c$ to $x$, and let $j \in I$ be the type of the panel $\sigma'$ shared by $c$ and $c'$.

Remark that $\proj_R(c) = \proj_R(c') = x$. Therefore $\proj_R(\sigma') =x $ and it follows from Lemma~\ref{lem:NestedProj} that no panel of $R$ is parallel to $\sigma'$. Setting $R'' = \proj_{\overline{\sigma '}}(R)$, we deduce from Lemmas~\ref{lem:ParallelProj} and Corollary~\ref{cor:ParallelEquivRel} that no panel of $R''$ is parallel to $\sigma'$. Therefore, for any $c'' \in \Ch(R'')$, we have $\Ch(R'') \subseteq X_j(c'')$. It follows that the pair $(c'', j)$ is admissible. Moreover, we have $\dist(c'', R) = \dist(\overline{\sigma'}, R) \leq \dist(c', R) = n-1$. By induction, we have $U_j(c'') \leq U(0)$. 

If $m_{i, j} = 2$, then $j \in i^\perp$ and, by the definition of $j$, we have $c' \in \Res_{i \cup i^\perp}(c)$. But $c'$ is closer to $x$ than $c$. Therefore 
$$
n> \dist(R, \Res_{i \cup i^\perp}(c))= \dist(R, \proj_{\Res_{i \cup i^\perp}(c)}(R))= \dist(R, z)
$$ 
for all $z \in  \Ch(\proj_{\Res_{i \cup i^\perp}(c)}(R))$ by Lemma~\ref{lem:ParallelProj} and Lemma~\ref{lem:ConstantDistance}. 
Therefore we have $c \not \in \Ch(\proj_{\Res_{i \cup i^\perp}(c)}(R))$, in contradiction with the admissibility of $(c, i)$.  Thus $m_{i, j} = \infty$, and hence we have $X_j(c') \subset X_i(c)$ by Lemma~\ref{lem:inclusion}. Since $X_j(c') = X_j(c'')$ by Lemma~\ref{lem:BasicWings}(ii), we conclude that $U_i(c) \leq U_j(c'') \leq U_0)$.

\begin{claim}\label{cl:Fix3}
For all $n \geq 0$, we have $G(n) \leq U(n) G(n+1)$.
\end{claim}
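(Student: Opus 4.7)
The plan is to show, for any $g \in G(n)$, that one can build $u \in U(n)$ reproducing the action of $g$ on $B(R, n+1) \setminus B(R, n)$; then $u^{-1} g \in G(n+1)$ will give the required decomposition. Elements of $U(n)$ automatically fix $B(R, n)$ by Claim~\ref{cl:Fix1}, so only the behaviour on distance-$(n+1)$ chambers is at issue. The engine will be Lemma~\ref{lem:U:residue}, which produces, from a family of distinct admissible residues $(\overline{\sigma_s}, i_s)$ at distance $n$ from $R$ together with permutations $\pi_s$ on each $\Ch(\sigma_s)$ fixing a chosen basepoint $c_s$, an element of $\la U_{i_s}(c_s) \; | \; s \ra \subseteq U(n)$ realising these permutations and fixing $B(R,n+1) \setminus \bigcup_{s} \bigcup_{z \in \Ch(R'_s)} \Ch(\Res_{i_s}(z))$ pointwise.

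For each chamber $y$ at distance $n+1$ from $R$ with $g(y) \neq y$, I would identify a covering admissible pair as follows. Choose a minimal gallery $y = y_0, y_1, \dots, y_{n+1} = \proj_R(y)$, set $c := y_1$, and let $i$ be the type of the panel $\sigma$ shared by $y$ and $c$. Writing $\overline{\sigma} = \Res_{i \cup i^\perp}(c)$ and $R' = \proj_{\overline{\sigma}}(R)$, admissibility requires $c \in R'$ and $\Ch(R') \subseteq X_i(c)$. By Lemma~\ref{lem:ConstantDistance} the first is equivalent to $\dist(\overline{\sigma}, R) = n$; by Lemma~\ref{lem:ParallelProj}, Proposition~\ref{prop:CriterionParallelRAB}(i) and Lemma~\ref{lem:BasicWings}(iii) the second reduces to showing that the type of $\proj_R(\overline{\sigma})$ does not contain $i$, which follows from minimality of the chosen gallery.

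The main obstacle is the degenerate case in which $\overline{\sigma}$ already contains chambers at distance strictly less than $n$ from $R$, so that the naive $(c, i)$ above fails admissibility. In this situation I would argue that $g$ must nevertheless fix $y$, contradicting $g(y) \neq y$: the product decomposition $\overline{\sigma} = \Res_i(c) \times \Res_{i^\perp}(c)$ from Lemma~\ref{lem:ProductRes}, together with the fact that $g$ pointwise fixes $\overline{\sigma} \cap B(R,n)$, constrains the induced $g$-action on $\overline{\sigma}$ enough to force $g(y) = y$ via the rigidity that type-preserving automorphisms of a direct product of sub-residues must respect both factors. Once this is in hand, every chamber actually moved by $g$ is covered by some admissible pair at distance $n$.

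By local finiteness of $X$, only finitely many distinct admissible pairs $(\overline{\sigma_s}, i_s)$ arise this way. For each, pick $c_s \in \Ch(R'_s)$ with $\dist(c_s, R) = n$ and set $\pi_s = g|_{\Ch(\sigma_s)}$, which fixes $c_s$ because $c_s \in B(R,n)$. Feeding this data into Lemma~\ref{lem:U:residue} produces $u \in U(n)$. A final verification that $u$ and $g$ coincide on $B(R, n+1)$ exploits the fact that $g$ fixes every $z \in \Ch(R'_s) \subseteq B(R,n)$, so by the parallelism between $\sigma_s$ and each $\Res_{i_s}(z)$, $g$ acts on $\Res_{i_s}(z)$ via the transport of $\pi_s$, matching $u$; on distance-$(n+1)$ chambers outside these panels, both $g$ and $u$ are trivial, the former by the previous paragraph and the latter by construction. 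Hence $u^{-1} g \in G(n+1)$.
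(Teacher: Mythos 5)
Your proposal is correct and follows essentially the same route as the paper: both identify the finitely many admissible pairs $(c_s,i_s)$ arising from the chambers/panels of $B(R,n+1)$ moved by $g$, feed $\pi_s=g|_{\Ch(\sigma_s)}$ into Lemma~\ref{lem:U:residue}, and use parallelism transport together with the ``Moreover'' clause to conclude that the correction lies in $G(n+1)$. The only difference is presentational: the paper indexes by moved panels grouped into parallelism classes and rules out your ``degenerate case'' directly (if $\dist(\overline{\sigma},R)<n$, the parallel copy of $\sigma$ through $R'$ lies in $B(R,n)$, so $g$ fixes $\sigma$ pointwise), which is just the contrapositive of your argument.
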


Let $h \in G(n)$. By the local finiteness of $X$, there are only finitely many panels $\sigma_1, \dots, \sigma_r$ with $\Ch(\sigma_s) \subset  B(R, n+1)$ and that are not pointwise fixed by  $h$. The set of panels $\sigma_1, \dots, \sigma_r$  is partitioned according to the relation of parallelism. Upon reordering, we may assume $\{\sigma_1, \dots, \sigma_t\}$ is a set of representatives of those classes such that for all $s < s' \leq t $, the panels $\sigma_s$ and $\sigma_{s'}$ are not parallel. 

Let $i_s \in I$ be the type of $\sigma_s$. It follows from Proposition~\ref{prop:CriterionParallelRAB} that the pairs $(\overline{\sigma_1}, i_1), \dots,  (\overline{\sigma_s}, i_s)$ are pairwise distinct. 

The projection $\proj_{\sigma_s}(R)$ must be a single chamber, say $c_s$, since $h$ fixes $\Ch(R)$ pointwise but acts non-trivially on $\Ch(\sigma_s)$. In particular we have $\dist(c_s, R) = n$. 

Let now $R'_s = \proj_{\overline{\sigma_s}}(R)$ and pick $z \in \Ch(R'_s)$. Since $h$ fixes pointwise $B(R, n)$, it must also fix pointwise $R'_s$. Since $\Res_{i_s}(z)$ is parallel to $\sigma_s$ by Proposition~\ref{prop:CriterionParallelRAB}(ii), it follows that $h$ does not act trivially on $\Ch( \Res_{i_s}(z))$. Therefore 
$$
 \dist(\overline{\sigma_s}, R) = \dist(R'_s, R) = \dist(z, R) \geq n = \dist(c_s, R) \geq  \dist(\overline{\sigma_s}, R).
$$
This implies that $c_s \in \Ch(R'_s)$. Moreover we have $\Ch(R'_s) \subseteq  X_{i_s}(c_s)$, since otherwise $\Ch(\sigma_s)$ would be contained in $\Ch(R'_s)$ by convexity, contradicting that $h$ acts trivially on $\Ch(R'_s)$. Thus the pair $(c_s, i_s)$ is admissible. 

Now it follows from  Lemma~\ref{lem:U:residue} that there is $g \in U(n)$ such that $gh $ fixes $\sigma_s$ pointwise for all  $s = 1, \dots, t$. In particular  $gh$ fixes $\sigma_s$ pointwise for all  $s = 1, \dots, r$. 

By definition $h$ fixes all chambers of $B(x, n+1) - \bigcup_{s=1}^r \Ch(\sigma_s)$. Moreover Lemma~\ref{lem:U} ensures that $g$  fixes all chambers of $B(x, n+1) - \bigcup_{s=1}^t \bigcup_{z \in \Ch(R'_s)} \Ch(\Res_{i_s}(z))$. Let $s \in \{1, \dots, t\}$ and $z \in \Ch(R'_s)$. By Lemma~\ref{lem:ParallelProj} and Lemma~\ref{lem:ConstantDistance}, we have $\dist(z, R) = \dist(R'_s, R) = \dist(c_s, R) = n$. The panels $\sigma_s$ and $\Res_{i_s}(z)$ are parallel by Proposition~\ref{prop:CriterionParallelRAB}(ii). Thus $h$ does not act trivially on $\Res_{i_s}(z)$ and it follows that $\Res_{i_s}(z) \in \{\sigma_1, \dots, \sigma_r\}$. 
This implies that $g$ fixes all chambers of $B(x, n+1) - \bigcup_{s=1}^r \Ch(\sigma_s)$. Hence so does $gh$,   so that $gh \in G(n+1)$ in view of the preceding paragraph. This proves the claim. 

\begin{claim}\label{cl:Fix4}
 $G(0) = \overline{U(0)}$.
\end{claim}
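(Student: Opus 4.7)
The plan is to iterate Claim~\ref{cl:Fix3} to decompose an arbitrary element of $G(0)$ as a convergent product of elements drawn from the groups $U(n)$, and then invoke Claim~\ref{cl:Fix2} to see that each such element actually lies in $U(0)$.

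More precisely, Claim~\ref{cl:Fix1} already gives the inclusion $\overline{U(0)} \leq G(0)$, so the task is to establish the reverse inclusion. Let $h \in G(0)$. By Claim~\ref{cl:Fix3}, we may write $h = u_0 h_1$ with $u_0 \in U(0)$ and $h_1 \in G(1)$. Applying Claim~\ref{cl:Fix3} to $h_1$ and iterating, we obtain a sequence of elements $u_n \in U(n)$ and $h_n \in G(n)$ such that
$$
h = u_0 u_1 \cdots u_{n-1} h_n
$$
for every $n \geq 1$. By Claim~\ref{cl:Fix2}, each $u_n$ belongs to $U(0)$, so the partial products $u_0 u_1 \cdots u_{n-1}$ lie in $U(0)$.

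It remains to verify that the sequence $(h_n)_{n \geq 0}$ converges to the identity in the compact-open topology. This follows from the fact that any chamber $c \in \Ch(X)$ satisfies $\dist(c, R) < \infty$, so $c \in B(R, n)$ for all sufficiently large $n$; since $h_n \in G(n) = \Fix_G(B(R, n))$, we have $h_n(c) = c$ eventually. Hence $h_n \to 1$ pointwise on $\Ch(X)$, i.e.\ in the compact-open topology. Consequently $u_0 u_1 \cdots u_{n-1} = h h_n^{-1}$ converges to $h$, exhibiting $h$ as a limit of elements of $U(0)$.

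There is no real obstacle here beyond bookkeeping: the three previous claims were designed exactly so that the standard "successive approximation" argument for fixators in totally disconnected locally compact groups goes through. The only point that deserves care is the verification that $h_n \to 1$, which uses that $\Ch(X) = \bigcup_{n \geq 0} B(R, n)$, a consequence of the connectedness of the chamber graph of $X$.
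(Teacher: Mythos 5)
Your argument is correct and follows essentially the same route as the paper: iterate Claim~\ref{cl:Fix3} to get $h = u_0\cdots u_{n-1}h_n$ with $u_k \in U(k) \leq U(0)$ by Claim~\ref{cl:Fix2}, note $h_n \to 1$ since every chamber eventually lies in $B(R,n)$, and combine with Claim~\ref{cl:Fix1} for the reverse inclusion. No issues.
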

Let $g \in G(0)$. Invoking  Claim~\ref{cl:Fix3} by induction on $n \geq 0$, we find  $u_n \in U(n)$ and $g_n \in G(n+1)$ such that $g = u_0 u_1 \dots u_n g_n$ for all $n$. By Claim~\ref{cl:Fix2} we have $u_n \in U(0)$ for all $n$. Since $\lim_{n \to \infty} g_n = 1$, we obtain $g \in \overline{U(0)}$. This proves that $G(0) \leq \overline{U(0)}$. The reverse inclusion is provided by Claim~\ref{cl:Fix1}.
\end{proof}

\section{Ends and local splittings}

A \textbf{locally normal} subgroup of a locally compact group is a compact subgroup whose normaliser is open. We first record that the automorphism groups of right-angled buildings always admit many locally normal subgroups. 

\begin{lem}\label{lem:LocNorm}
Let $X$ be a thick, semi-regular, locally finite, right-angled building of type $(W, I)$.   Assume that $(W, I)$ is irreducible non-spherical. 

Then $\Aut(X)^+$ admits locally normal subgroups which decompose non-trivially as direct products, all of whose factors are themselves locally normal. 
\end{lem}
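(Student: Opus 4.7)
The plan is to exhibit the desired locally normal subgroup explicitly, using the direct product decomposition of the fixator of a wall-residue supplied by Proposition~\ref{prop:ProductDec}. Fix any chamber $c \in \Ch(X)$ and any $i \in I$, and set $\sigma = \Res_i(c)$ and $R = \Res_{i \cup i^\perp}(c)$. The proposition gives
$$
K \;:=\; \Fix_G(R) \;=\; \prod_{d \sim_i c} V_i(d),
$$
a decomposition with $q_i \geq 3$ factors. Since $(W, I)$ is irreducible and non-spherical, we have $i \cup i^\perp \subsetneq I$, so Lemma~\ref{lem:NonAb} applies and each factor $V_i(d)$ is non-abelian, in particular non-trivial. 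Hence the decomposition is genuinely non-trivial.

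It remains to check that both $K$ and each factor $V_i(d)$ are locally normal in $G = \Aut(X)^+$, i.e. compact with open normaliser. For $K$: any element of $\Fix_G(c)$ preserves every residue through $c$, and in particular $R$, so $\Stab_G(R) \supseteq \Fix_G(c)$; the latter is compact open since $X$ is locally finite. Thus the normaliser of $K$ contains the open subgroup $\Stab_G(R)$, while $K \leq \Fix_G(c)$ is compact.

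For each factor $V_i(d)$ with $d \sim_i c$, I first claim $V_i(d) \leq \Fix_G(d)$, which yields compactness. Indeed any $v \in V_i(d)$ fixes every chamber of $\sigma$ other than $d$, because those chambers lie outside $X_i(d)$; hence the action of $v$ on the finite set $\Ch(\sigma)$ has $q_i-1$ fixed points and must fix $d$ as well. For openness of the normaliser, it suffices to show $\Fix_G(d) \leq N_G(V_i(d))$. If $g \in \Fix_G(d)$, then $g$ preserves $\sigma = \Res_i(d)$ and fixes $d$, so $g$ preserves the wing $X_i(d) = \{x \in \Ch(X) \mid \proj_\sigma(x) = d\}$; conjugation by $g$ therefore sends the subgroup of automorphisms supported on $X_i(d)$ to itself, i.e.\ $g V_i(d) g\inv = V_i(d)$.

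I do not foresee any substantive obstacle. The argument is really just a packaging of Proposition~\ref{prop:ProductDec} and Lemma~\ref{lem:NonAb} together with elementary observations about the wings from Section~3; the verifications of compactness and openness of normalisers both reduce to the single observation that the pointwise fixator of a finite set of chambers is compact open.
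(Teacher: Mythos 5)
Your proposal is correct and follows essentially the same route as the paper: it takes the decomposition $\Fix_G(\Res_{i\cup i^\perp}(c))=\prod_{d\sim_i c}V_i(d)$ from Proposition~\ref{prop:ProductDec}, gets non-triviality of the factors from Lemma~\ref{lem:NonAb} (via $i\cup i^\perp\subsetneq I$), and uses local finiteness to get compactness and open normalisers. The only cosmetic differences are that you normalise $V_i(d)$ by the chamber fixator $\Fix_G(d)$ where the paper uses $\Fix_G(\Res_i(c))$, and you spell out explicitly (correctly) that the full fixator $K$ is itself locally normal.
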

\begin{proof}
Given $c \in \Ch(C)$ and $i \in I$, the group $V_i(c)$ is closed by definition, compact because it fixes $c$, and non-trivial by Lemma~\ref{lem:NonAb}. Let $U =\Fix_G(\Res_i(c))$. Since $X$ is locally finite, the group $U$ is a finite intersection of chamber stabilisers, and is thus open in $G$. Moreover, it normalises $V_i(c)$, which proves that   $V_i(c)$ is a locally normal subgroup.   The desired conclusion is thus provided by Proposition~\ref{prop:ProductDec}. 
\end{proof}

The following result is an extended version of Theorem~\ref{thm:product} from the introduction. 

\begin{thm}\label{thm:product:bis}
Let $X$ be a thick, semi-regular, locally finite, right-angled building of type $(W, I)$.   Assume that $(W, I)$ is irreducible non-spherical. 

Then the following are equivalent. 
\begin{enumerate}[(i)]
\item $W$ is one-ended. 

\item $W$ does not split as a free amalgamated product over a finite subgroup. 

\item There is no partition $I = I_0 \cup I_1 \cup I_2$ with $I_1, I_2$ non-empty, $m_{i, j} =2 $ for all $i, j \in I_0$ and $m_{i, j} = \infty$ for all $i \in I_1$ and $j \in I_2$.  
\label{it:partition}

\item $X$ is one-ended.
\label{it:Xone-end}

\item $G$ is one-ended. 
\label{it:Gone-end}

\item All compact open subgroups of $G = \Aut(X)^+$ are indecomposable.
\label{it:LocalFactor}
\end{enumerate}

\end{thm}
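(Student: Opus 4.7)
My plan is to prove the six conditions equivalent by combining classical results on splittings of Coxeter groups with the wing and residue-fixator structure developed in the preceding sections. I would establish the easy equivalences (i) $\Leftrightarrow$ (ii) $\Leftrightarrow$ (iii) and (iv) $\Leftrightarrow$ (v) first, then show the direct implications from a partition to both the multi-endedness of $G$ and the existence of a decomposable compact open subgroup, and finally close the cycle by extracting a partition from an arbitrary decomposable compact open.

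For (i) $\Leftrightarrow$ (ii) I would invoke Stallings' theorem for the finitely generated group $W$. For (ii) $\Leftrightarrow$ (iii), a partition as in (iii) directly yields the amalgamation $W = W_{I_0 \cup I_1} *_{W_{I_0}} W_{I_0 \cup I_2}$ over the finite elementary abelian $2$-group $W_{I_0}$; conversely, since every finite subgroup of a Coxeter group is conjugate into a spherical parabolic subgroup $W_J$, a Bass--Serre analysis of any splitting of $W$ over a finite subgroup extracts such a partition (this is essentially Mihalik--Tschantz specialized to the right-angled case). The equivalence (iv) $\Leftrightarrow$ (v) follows from Milnor--\v{S}varc: $G$ acts properly and cocompactly on the Davis realization of $X$, hence it is quasi-isometric to $X$ and shares its number of ends.

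Starting from a partition $I = I_0 \sqcup I_1 \sqcup I_2$ (i.e.\ assuming not (iii)), I would construct the tree of residues $T$ whose vertices are the $(I_0 \cup I_1)$- and $(I_0 \cup I_2)$-residues of $X$ and whose edges are the $I_0$-residues, with incidence given by inclusion. The amalgamation structure of $W$ forces $T$ to be a tree, and $G$ acts on $T$ cocompactly with edge stabilizers $\Fix_G(R)$ for $R$ a type-$I_0$ residue; these are compact open because $R$ is finite. Removing $R$ disconnects $X$ into two unbounded parts indexed by the half-trees of $T \setminus \{R\}$, establishing not (iv), and hence not (v) by the quasi-isometry. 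Moreover, $\Fix_G(R)$ decomposes as the direct product of the kernels of its action on the two half-trees of $T$ bordering $R$, with both factors non-trivial by thickness and Proposition~\ref{prop:Panel:extension}; this yields a decomposable compact open subgroup, so not (vi).

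To close the cycle it remains to show that a non-trivial decomposition $K = K_1 \times K_2$ of a compact open subgroup forces a partition as in not (iii). I would analyze the supports of $K_1$ and $K_2$ in $\Ch(X)$: these are disjoint and $K$-invariant. Using Proposition~\ref{prop:Fix} to express $K$ as the closure of the group generated by wing-fixators $U_i(c)$, together with the wing and parallel-residue structure (Propositions~\ref{prop:CriterionParallelRAB} and \ref{prop:WingConvex}), one can show that each generator $U_i(c)$ is sorted into one of the two factors according to its type $i$; this sorting then defines a partition of $I$ with the prescribed commutation/infinity pattern, while the generators whose wings are fixed by both factors form the spherical core $I_0$. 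The main obstacle is precisely this last step, i.e.\ extracting a clean combinatorial partition of $I$ from a purely local product structure via the wing-based description of compact open subgroups, and in particular verifying the infinite-order relations between $I_1$ and $I_2$ using the parallelism analysis.
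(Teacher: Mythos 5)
Your outline does not actually close the cycle: none of your implications goes from the group-theoretic conditions (i)--(iii), or from (vi), into the conditions (iv)--(v). You establish (i)$\Leftrightarrow$(ii)$\Leftrightarrow$(iii), (iv)$\Leftrightarrow$(v), not(iii)$\Rightarrow$not(iv), not(iii)$\Rightarrow$not(vi), and not(vi)$\Rightarrow$not(iii). This makes (i),(ii),(iii),(vi) mutually equivalent and shows that (iv)$\Leftrightarrow$(v) imply them, but nothing in your plan shows that one-endedness of $W$ forces one-endedness of $X$ or of $G$. That implication cannot be obtained by a quasi-isometry argument, since $X$ is quasi-isometric to $G$ but not to $W$ (the apartments are thin, the building is thick). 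In the paper this is the separate implication (i)$\Rightarrow$(iv), proved by an induction showing that the complement of every ball in $\Ch(X)$ is gallery-connected: one uses Lemma~\ref{lem:U} (hence strong transitivity) to produce an apartment through $x$ and the two chambers to be joined, and then imports the one-endedness of that apartment. Some argument of this kind is required and is entirely missing from your proposal.

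Concerning the step you yourself flag as the main obstacle, the shortcut you propose changes the logic in a way that is unlikely to succeed as stated. First, Proposition~\ref{prop:Fix} describes $\Fix_{G}(R)$ for a \emph{spherical residue} $R$; an arbitrary compact open subgroup $K$ is not of this form, and the paper instead only uses $K\supseteq \Fix_G(B(x,n_0))$ and analyses panels far from a base chamber $x$. Second, the ``sorting'' of the groups $U_i(c)$ (or of panels) into the two factors is precisely the function $f$ in the paper's proof of (iv)$\Rightarrow$(vi); constructing it and proving its coherence already requires Lemma~\ref{lem:abelian}, Lemma~\ref{lem:NonAb}, Lemma~\ref{lem:DeepCorner} and a Ramsey argument, and even then $f$ is only \emph{locally} constant. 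The paper needs the one-endedness of $X$ --- it proves (iv)$\Rightarrow$(vi), not (iii)$\Rightarrow$(vi) --- exactly to propagate this local constancy to all chambers outside a large ball and conclude that one factor is finite; your plan to convert a failure of constancy directly into a partition of $I$ with $m_{i,j}=\infty$ between $I_1$ and $I_2$ and a spherical commuting $I_0$ is not substantiated and would require essentially all of that machinery and more. A smaller point: in your not(iii)$\Rightarrow$not(vi) step, the equality $\Fix_G(R)=K_1\times K_2$ with $K_1,K_2$ the fixators of the two sides needs a surjectivity argument; the paper obtains the analogous decomposition from Proposition~\ref{prop:Fix} together with the triviality of the quasi-centre of $G$ (which uses the simplicity theorem), not merely from thickness and Proposition~\ref{prop:Panel:extension}.
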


We shall need the following basic fact on right-angled Coxeter groups. 

\begin{lem}\label{lem:DeepCorner}
Let $(W, I)$ be an irreducible non-spherical right-angled Coxeter system. 

For any two half-spaces $H, H'$ whose boundary walls cross in the Davis complex of $W$, there is a half-space $H''$ properly contained in $H \cap H'$.
\end{lem}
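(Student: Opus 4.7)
The plan is to exhibit a wall $\mathcal{W}''$ in the Davis complex lying strictly inside the quadrant $H \cap H'$: disjoint from both $\mathcal{W}$ and $\mathcal{W}'$, and contained in $H \cap H'$ as a subcomplex. Either half-space bounded by such a $\mathcal{W}''$ is automatically properly contained in $H \cap H'$, since $H \cap H'$ cannot itself be a half-space (its bounding walls cross). Using the $W$-action on the Davis complex, I would first reduce to the situation where $\mathcal{W} = \mathcal{W}_i$ and $\mathcal{W}' = \mathcal{W}_j$ for generators $i,j \in I$ with $m_{i,j}=2$, and where $H \cap H' = X_i(s_i) \cap X_j(s_j)$ is the quadrant containing the chamber $s_i s_j$.

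Next, using the irreducibility of $(W,I)$, the non-commutation graph $\Gamma$ on $I$ (edges $\{k,l\}$ whenever $m_{k,l}=\infty$) is connected, and by non-sphericity $\Gamma$ has at least one edge. Since $m_{i,j}=2$ there is no direct edge between $i$ and $j$, so a shortest path $i = v_0, v_1, \ldots, v_n = j$ in $\Gamma$ has length $n \geq 2$; by minimality $m_{v_a, v_b} = 2$ whenever $|a-b| \geq 2$. In the easy case $n=2$, the generator $k := v_1$ satisfies $m_{i,k}=m_{j,k}=\infty$, and I would verify that the wing $X_k(s_i s_j s_k)$ is properly contained in $H \cap H'$ by two applications of Lemma~\ref{lem:inclusion}(a): once with reference chamber $s_i$ (and index $i$, using $m_{k,i}=\infty$), and symmetrically with $s_j$ (and $j$, using $m_{k,j}=\infty$). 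The hypotheses reduce to short length computations showing $s_i s_j s_k \in X_i(s_i) \cap X_j(s_j)$ and $s_i, s_j \notin X_k(s_i s_j s_k)$.

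In the remaining case $n \geq 3$, which is the main obstacle, no single generator satisfies $m_{i,k}=m_{j,k}=\infty$, so the preceding argument fails. Instead I would take a deeper chamber $c := s_i s_j \prod_{a \neq m} s_{v_a}$ (the product over $a \in \{1,\dots,n-1\}$ omitting one interior index $m$) and examine the wall $c \mathcal{W}_{v_m}$ through the $v_m$-panel of $c$. The computation proceeds by simplifying $c^{-1} s_i c$ and $c^{-1} s_j c$ using the commutations $m_{v_a,v_b}=2$ for $|a-b|\geq 2$ imposed by path minimality; this yields conjugates of $s_i$ and $s_j$ whose conjugating factors involve $s_{v_{m-1}}$ and $s_{v_{m+1}}$ respectively. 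One then shows that neither conjugate commutes with $s_{v_m}$: this is the content of the criterion that $s_{v_{m-1}}\cdots s_i \cdots s_{v_{m-1}}$ commutes with $s_{v_m}$ only if all intermediate Coxeter integers equal $2$, which fails because of the two path edges $m_{v_{m-1},v_m}=\infty$ and $m_{v_m,v_{m+1}}=\infty$ adjacent to $v_m$ (with the convention $v_0=i$, $v_n=j$). The index $m$ must be chosen so that this non-commutation is not destroyed by cancellations (e.g., $m = \lceil n/2 \rceil$). The delicate part of the argument is this word-combinatorial verification in the Coxeter group, which for each $n$ relies on the combinatorics of the shortest path.
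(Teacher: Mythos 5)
Your reduction to standard walls $\mathcal W_i,\mathcal W_j$ with $m_{i,j}=2$ and your $n=2$ case are fine, and the overall strategy (exhibit a wall disjoint from both $\mathcal W_i$ and $\mathcal W_j$ with an adjacent chamber in the quadrant) is viable --- with the caveat that only the half-space on the far side of $\mathcal W_i\cup\mathcal W_j$ lies in $H\cap H'$, not ``either'' half-space. The genuine gap is the case $n\ge 3$, which is the heart of the lemma and which you leave as an unverified ``word-combinatorial'' claim; worse, with your suggested choice $m=\lceil n/2\rceil$ the claim is false. Writing $c=s_is_ju$ with $u=s_{v_1}\cdots\widehat{s_{v_m}}\cdots s_{v_{n-1}}$, one has $c^{-1}s_jc=u^{-1}s_ju$, and since $s_j$ commutes with every letter of $u$ except the last one $s_{v_{n-1}}$, this conjugate collapses to $s_{v_{n-1}}s_js_{v_{n-1}}$ --- it does not ``involve $s_{v_{m+1}}$'' unless $m=n-2$. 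Hence if $m\le n-3$ (which holds for $m=\lceil n/2\rceil$ as soon as $n\ge 6$), the generator $s_{v_m}$ commutes with both $s_{v_{n-1}}$ and $s_j$, hence with $c^{-1}s_jc$, so $cs_{v_m}c^{-1}$ commutes with $s_j$; since two distinct commuting reflections in a right-angled Coxeter group have crossing walls (they generate a finite group, which fixes a point of the CAT$(0)$ Davis complex lying on both walls), the wall $c\mathcal W_{v_m}$ crosses $\mathcal W_j$ and neither of its half-spaces is contained in $H'$. The construction is repairable: choosing $m=n-1$ or $m=n-2$ one gets $c^{-1}s_jc\in\{s_j,\,s_{v_{n-1}}s_js_{v_{n-1}}\}$, which does not commute with $s_{v_m}$, while $c^{-1}s_ic=s_{v_{m-1}}\cdots s_{v_1}s_is_{v_1}\cdots s_{v_{m-1}}$ never commutes with $s_{v_m}$ (the letter $s_{v_{m-1}}$ blocks any cancellation); adding the routine length checks that $c$ and $cs_{v_m}$ lie in $H\cap H'$ would then complete an elementary proof. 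But these verifications, and the correct (asymmetric) choice of $m$, are exactly what is missing from your write-up.

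For comparison, the paper takes an entirely different, non-combinatorial route: the Davis complex is an irreducible CAT$(0)$ cube complex (irreducibility via H\'ee's result that every wall admits a disjoint wall), the $W$-action is essential and fixes no point at infinity by the rank-rigidity results of Caprace--Sageev, and their ``corner'' lemma then asserts that one of the four sectors determined by two crossing hyperplanes properly contains a half-space, which a product of the two reflections moves into $H\cap H'$. So your approach, if completed along the corrected lines above, would yield a self-contained elementary proof avoiding that machinery --- but as it stands the key case is not proved.
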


\begin{proof}
The Davis complex of a right-angled Coxeter group $(W, I)$ is a CAT($0$) cube complex. We call it $\Sigma$. If $(W, I)$ is irreducible, then for every wall $\mathcal W$, we may find a wall $\mathcal W'$ disjoint from $\mathcal W$ (see \cite[Prop.~8.1 p.~309]{Hee}). This implies that $\Sigma$ is irreducible as a cube complex. Moreover, transforming $\mathcal W$ under the dihedral group generated by the reflections through $\mathcal W$ and $\mathcal W'$, we find walls arbitrarily far from $\mathcal W$ in both of the half-spaces that it determines. This proves that $W$ acts essentially on $\Sigma$ in the sense of \cite{CaSe}. Moreover, since $\Sigma$ is irreducible, it follows from  \cite[Th.~4.7]{CaSe} that $W$ does not fix any point at infinity of $\Sigma$. The hypotheses of \cite[Lem.~5.2]{CaSe} are thus fulfilled. The latter result ensures that at least one of the four sectors determined by the boundary walls of $H$ and $ H'$ properly contains a half-space. Transforming that half-space by an appropriate element from the group generated by the reflections fixing $H$ and $H'$, we find a half-space properly contained in $H \cap H'$, as desired.
\end{proof}

We also record an abstract group theoretic fact, where $[g, V]$ denotes the set of commutators $\{[g, v] \; | \; v \in V\}$. 

\begin{lem}\label{lem:abelian}
Let $C$ be a set and $G \leq \Sym(C)$ be a group of permutations of $C$. Let $V \leq G$ be a subgroup fixing all elements of $C$ outside of a subset $Y \subseteq C$. Let $a, b \in G$ such that $Y \cap a(Y) = \varnothing = Y \cap b(Y)$. If each element of $[a, V]$ commutes with each element of $[b, V]$, then $V$ is abelian. 
\end{lem}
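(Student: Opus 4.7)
The strategy is to observe that, because $V$ is supported on $Y$ while $aVa^{-1}$ is supported on the disjoint set $a(Y)$, the commutator $[a,u]=aua^{-1}u^{-1}$ behaves on $Y$ as a ``clean copy'' of $u^{-1}$. The hypothesis that $[a,V]$ centralises $[b,V]$ then propagates to $V$ itself, forcing $V$ to be abelian.

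First I would establish the following computation: for every $u\in V$, the commutator $[a,u]$ preserves $Y$ and its restriction to $Y$ equals $u^{-1}|_Y$. Indeed, $V$ fixes $C\setminus Y$ pointwise, so $V$ permutes $Y$. Fix $y\in Y$ and set $y'=u^{-1}(y)\in Y$. Since $Y\cap a(Y)=\varnothing$ implies $a^{-1}(Y)\cap Y=\varnothing$, the point $a^{-1}(y')$ lies outside $Y$, hence is fixed by $u$. Therefore
\[
[a,u](y)\;=\;a\bigl(u(a^{-1}(u^{-1}(y)))\bigr)\;=\;a\bigl(a^{-1}(y')\bigr)\;=\;y'\;=\;u^{-1}(y)\in Y,
\]
as claimed. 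The symmetric argument, using $Y\cap b(Y)=\varnothing$, shows that for every $v\in V$ the commutator $[b,v]$ preserves $Y$ and satisfies $[b,v]|_Y=v^{-1}|_Y$.

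Now let $u,v\in V$ be arbitrary. By hypothesis the elements $[a,u]$ and $[b,v]$ commute in $G$, so in particular their commutator is the identity on $Y$. Since both factors preserve $Y$, restriction to $Y$ is a homomorphism, and the previous step yields
\[
1\;=\;\bigl[\,[a,u],[b,v]\,\bigr]\big|_{Y}\;=\;[u^{-1},v^{-1}]\big|_{Y}.
\]
Thus $u^{-1}$ and $v^{-1}$ commute on $Y$; since they fix $C\setminus Y$ pointwise, they commute on all of $C$. Equivalently $[u,v]=1$ in $G$. As this holds for all $u,v\in V$, the group $V$ is abelian.

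The only place any care is needed is in the first step, where one must verify that $[a,u]$ actually maps $Y$ into $Y$ and simplifies to $u^{-1}$ there; the hypotheses $Y\cap a(Y)=\varnothing$ (which gives $a^{-1}(Y)\cap Y=\varnothing$) and ``$V$ fixes $C\setminus Y$ pointwise'' are precisely what make the cancellation work. Everything else is then a one-line consequence of the commutation hypothesis.
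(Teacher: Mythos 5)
Your proof is correct and follows essentially the same route as the paper: you compute that $[a,u]$ restricted to $Y$ is $u^{-1}|_Y$ (the paper packages this as a homomorphism $\varphi$ extending $g|_Y$ by the identity) and then transfer the commutation hypothesis to $V$ via restriction to $Y$. No gaps.
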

\begin{proof}
Given $g \in  \Stab_G(Y)$, we define $\varphi(g) \in \Sym(C)$ by
$$\varphi(g) \colon x \mapsto \left \{
\begin{array}{ll}
g(x) & \text{if } x \in Y,\\
x & \text{otherwise}.
\end{array} \right .
$$
Then  $\varphi \colon \Stab_G(Y) \to \Sym(C)$ is a homormorphism. Since $V$ and $aVa\inv$ have disjoint supports, they are both contained in $\Stab_G(Y)$. Moreover, given $v \in V$, we have
$$\varphi([a, v]) = \varphi(ava\inv v\inv) = \varphi(ava\inv) \varphi(v\inv) = \varphi(v\inv) = v\inv.$$
Similarly $\varphi([b, w])=w\inv$ for all $w  \in V$. Since $[a, v]$ and $[b, w]$ commute by hypothesis, so do their images under $\varphi$. Thus $V$ is abelian, as claimed.
\end{proof}

\begin{proof}[Proof of Theorem~\ref{thm:product:bis}]
The equivalences (i) $\Leftrightarrow$ (ii)  $\Leftrightarrow$ (iii) are well-known, see \cite{MT}.  The equivalence (\ref{it:Xone-end}) $\Leftrightarrow$ (\ref{it:Gone-end}) is clear since $G$ acts properly and cocompactly on $X$, so that $G$ and $X$ are quasi-isometric. 

\medskip \noindent
(i) $\Rightarrow$ (\ref{it:Xone-end}) By assumption all apartments are one-ended. Given $x \in \Ch(X)$, we need to prove that for all $n \geq 0$, any two chambers $c'$, $c''$ at distance~$>n$ away from $x$ can be connected by a gallery  avoiding the ball $B(x, n)$. We proceed by induction on $n$. 

In the base case $n=0$, either a minimal gallery from $c'$ to $c''$ does not pass through $x$, and we are done, or every apartment containing $c'$ and $c''$ also contains $x$, in which case we can find a gallery from $c'$ to $c''$ avoiding $x$ inside one of these apartments, since these are one-ended by hypothesis. 

Let now $n>0$ and assume that $\Ch(X) - B(x, n-1)$ is gallery-connected. Let $c' =c_0, c_1, \dots, c_t = c''$ be a gallery from $c'$ to $c''$ which does not meet $B(x, n-1)$. Then for all $i$, if $c_i \in B(x, n)$ then $\dist(c_{i-1}, x) = \dist(c_{i+1}, x) = n+1$. Therefore, it suffices to prove that if $\dist(c', x) = \dist(c'', x)=n+1$ and $c'$, $c''$ are both adjacent to a common chamber $d \in B(x,n)$, then there is a gallery from $c'$ to $c''$ avoiding $B(x, n)$. Let $\Sigma$ be an apartment containing $x$ and $d$. Let $d'$ and $d''$ be the two chambers of $\Sigma$ different from $d$ and respectively sharing with $d$ the common panel of $d$ and $c'$, and of $d$ and $c''$. Let $i'$ (resp. $i''$) be the type of the panel $\sigma'$ (resp. $\sigma''$) shared by $d, d'$ and $c'$ (resp. $d, d''$ and $c''$). Clearly $\proj_{\overline{ \sigma'}}(d) = d$ and $\proj_{\overline{ \sigma''}}(d) = d$. Therefore, by Lemma~\ref{lem:U}, there is an element $g \in G$ fixing $X_{i'}(d) \cap X_{i''}(d)$ pointwise  and such that $g(d') = c'$ and $g(d'')=c''$. Since $x \in X_{i'}(d) \cap X_{i''}(d)$, it follows that  $ g(\Sigma)$ is an apartment containing $x, d, c'$ and $c''$. Since apartments are one-ended, a gallery joining $c'$ to $c''$ and avoiding $B(x, n)$ can be found in the apartment $g(\Sigma)$, and we are done.

\medskip \noindent
(\ref{it:Gone-end}) $\Rightarrow$ (\ref{it:partition}) Assume that (\ref{it:partition}) fails and let  $I = I_0 \cup I_1 \cup I_2$ be a partition with $I_1, I_2$ non-empty, $m_{i, j} =2 $ for all $i, j \in I_0$ and $m_{i, j} = \infty$ for all $i \in I_1$ and $j \in I_2$. Let $T$ be the graph whose vertex set is the collection of residues of type $I_0 \cup I_1$ and $I_0 \cup I_2$, and declare that two residues are adjacent if they contain a common residue of type $I_0$. By Lemma~4.3 from \cite{HP} the graph $T$ is a tree.  Since $\la I_0 \ra$ is finite and since $X$ is locally finite, the residues of type $I_0$ are finite and, hence, their stabilisers are compact open subgroups. In other words the edge stabilisers of the tree $T$ are compact open subgroups. Since $G$ is chamber-transitive, it acts edge-transitively on $T$. This yields a non-trivial decomposition of $G$ as an amalgamated free product over a compact open subgroup. Hence $G$ cannot be one-ended by \cite{Abels}. 

\medskip \noindent
(\ref{it:LocalFactor})  $\Rightarrow$ (\ref{it:partition}) Assume that (\ref{it:partition}) fails and let  $I = I_0 \cup I_1 \cup I_2$ be a partition with $I_1, I_2$ non-empty, $m_{i, j} =2 $ for all $i, j \in I_0$ and $m_{i, j} = \infty$ for all $i \in I_1$ and $j \in I_2$. Let $R$ be a residue of type $I_0$ in $X$. Since $\la I_0 \ra$ is finite, the set $\Ch(R)$ is finite and hence $\Stab_G(R)$ and $\Fix_G(R)$ are both compact open subgroups of $G$. We shall prove that $\Fix_G(R)$ splits non-trivially as a direct product. 

For $k = 1, 2$, let $U_k = \overline{\la U_i(c) \; | \; c \in \Ch(R), i \in I_k\ra}$. Notice that $U_1$ and $U_2$ are both non-trivial by Lemma~\ref{lem:NonAb} since $I_1$ and $I_2$ are assumed non-empty. 

We claim that $U_1$ and $U_2$ commute. Indeed, let $c_1, c_2 \in \Ch(R)$, let $i_1 \in I_1$, $i_2 \in I_2$. It suffices to prove that $U_{i_1}(c_1)$ and $U_{i_2}(c_2)$ commute. This in turn will follow if one shows that they have disjoint supports. 

By definition the support of $U_{i_1}(c_1)$ is the union of the sets $X_{i_1}(d)$ over all chambers $d$ that are $i_1$-adjacent to but different from $c_1$. Let $d$ be such a chamber. We claim that $X_{i_1}(d) \subset X_{i_2}(c_2)$. 

By Corollary~\ref{cor:ResidueWing}, we have $c_2 \in X_{i_1}(c_1)$ so that $ c_2  \not \in X_{i_1}(d)$. Similarly, Corollary~\ref{cor:ResidueWing}, implies that $c_1 \in  X_{i_2}(c_2)$, which implies that $d \in  X_{i_2}(c_2)$, since otherwise a panel of type $i_1$ would be parallel to a panel of type $i_2$ by Lemma~\ref{lem:ParallPanels}, which is impossible by Proposition~\ref{prop:CriterionParallelRAB}(i). This proves that $d \in  X_{i_2}(c_2)$ and $ c_2  \not \in X_{i_1}(d)$. The claim then follows from Lemma~\ref{lem:inclusion}. 

The claim implies that the support of $U_{i_1}(c_1)$ is pointwise fixed by $U_{i_2}(c_2)$. By symmetry, the support of $U_{i_2}(c_2)$ is pointwise fixed by $U_{i_1}(c_1)$, so that $U_{i_1}(c_1)$ and $U_{i_2}(c_2)$ commute, as desired.  This confirms that  $U_1$ and $U_2$ commute. 

By Proposition~\ref{prop:Fix}, we have $\Fix_G(R)  = \overline{\la U_1 \cup U_2 \ra}$. Since $U_1$ and $U_2$ commute, we have $\la U_1 \cup U_2 \ra = U_1 U_2$. Moreover $U_1$ and $U_2$ are compact, since they are both closed subgroups of the compact group $\Stab_G(R)$. Thus the product $U_1 U_2$ is closed, so that $\Fix_G(R)  =U_1 U_2$. In particular  $U_1 \cap U_2 \leq \centra(\Fix_G(R))$. Hence $U_1 \cap U_2$ is contained in the \textbf{quasi-centre} of $G$, i.e. the collection of elements commuting with an open subgroup. By \cite[Theorem~4.8]{BEW} the group $G$ has trivial quasi-centre since $G$ is compactly generated and simple. Thus $\Fix_G(R) \cong U_1 \times U_2$ as desired.  

\medskip \noindent
(\ref{it:Xone-end}) $\Rightarrow$ (\ref{it:LocalFactor}). Assume finally that 
(\ref{it:Xone-end})  holds and let $U \leq G$ be a compact open subgroup with two commuting subgroups $A, B$ such that $U = A B$. We shall prove that $\overline A$ or $\overline B$ is open. Since the closures $\overline A$ and $\overline B$ commute, we infer that $B$ or $A$ is in the quasi-centre of $G$, which is trivial by \cite[Theorem~4.8]{BEW} since $G$ is compactly generated and simple. Thus $U = A$ or $U=B$ and (\ref{it:LocalFactor}) holds. 

Therefore, all we need to show is that a compact open subgroup $U = A B$ is the commuting product of two closed subgroups $A$ and $B$, then $A$ or $B$ is open. To this end, it suffices   to show that $A$ or $B$ is finite. This follows from the last of a series of claims which we shall now prove successively. 

Let $x \in \Ch(X)$. Upon replacing $A$ and $B$ by their respective intersections with the compact open subgroup $\Stab_G(x)$ and then redefining $U$ accordingly, we may assume that  $U$ fixes $x$.  For all $m \geq 0$, we set $G(m) = \Fix_G(B(x, m))$. Since $U$ is open it contains $G(n_0)$ for some $n_0 \geq 0$. Without loss of generality, we may assume that $n_0>1$. We define 
$$\Pi = \{ \sigma \text{ panel of } X \; | \;  \Stab_{G(n_0)}(\sigma) \not \leq \Fix_G(\sigma)\}.$$ 
In particular, if $\sigma \in \Pi$ then $\dist(\sigma, x)\geq n_0$.

Moreover, to each chamber $c \in  \Ch(X)$, we associate two subsets  of $I$ defined as follows:
$$I_0(c) = \{i \in I \; | \; \proj_{\Res_i(c)}(x) \neq c\}$$
and 
$$I_\Pi(c) = \{i \in I \; | \; \Res_i(c) \in \Pi\}.$$
Recall that a subset $J \subseteq I$ is called \textbf{spherical} if it generates a finite subgroup of $W$. It is a classical fact that $I_0(c)$ is a spherical subset of $I$. 

\setcounter{claim}{0}

\begin{claim}\label{cl:I_Pi}
Let $c \in \Ch(X)$ with $\dist(c, x)> n_0$ and $i \in I$. Then $i \in I_\Pi(c)$ if and only if $\dist(x, \Res_{i \cup i^\perp}(c)) \geq n_0$.
\end{claim}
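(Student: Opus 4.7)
The plan is to prove each direction separately, in both cases reducing the $i$-panel $\sigma = \Res_i(c)$ to the ``reference'' $i$-panel $\sigma'' = \Res_i(c_0)$, where $c_0 = \proj_{\overline{\sigma}}(x)$ and $\overline{\sigma} = \Res_{i \cup i^\perp}(c)$. The two structural facts I would rely on are Proposition~\ref{prop:CriterionParallelRAB}(ii) (which says $\sigma$ and $\sigma''$ are parallel, since they are both $i$-panels lying in $\overline{\sigma}$) and Lemma~\ref{lem:ProductRes} (which gives the product decomposition $\Ch(\overline{\sigma}) = \Ch(\sigma'') \times \Ch(\Res_{i^\perp}(c_0))$).

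For the ``if'' direction, suppose $\dist(x, \overline{\sigma}) \geq n_0$. Using thickness, pick a non-trivial $\alpha \in \Sym(\Ch(\sigma''))$ fixing $c_0$, and apply Proposition~\ref{prop:Panel:extension} to produce an extension $\tilde{\alpha} \in \Aut(X)^+$. Clause~(ii) of that proposition ensures $\tilde{\alpha}$ fixes $X_i(c_0)$ pointwise, while Corollary~\ref{cor:balls} (applied at $c_0$) combined with the inequality $\dist(x, c_0) \geq n_0$ forces $B(x, n_0) \subseteq X_i(c_0)$, so $\tilde{\alpha} \in G(n_0)$. Inspecting the construction in Proposition~\ref{prop:Panel:extension}, $\tilde{\alpha}$ acts on $\Ch(\overline{\sigma})$ as $\alpha \times \mathrm{Id}$ in the product decomposition; hence it stabilizes every $i$-panel contained in $\overline{\sigma}$, in particular $\sigma$, and moves at least one of its chambers. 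Thus $\sigma \in \Pi$, giving $i \in I_\Pi(c)$.

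For the ``only if'' direction I would argue the contrapositive. Suppose $\dist(x, \overline{\sigma}) \leq n_0 - 1$ and let $g \in \Stab_{G(n_0)}(\sigma)$. Then $c_0 \in B(x, n_0 - 1)$, and every chamber of $\sigma''$ distinct from $c_0$ lies at distance $\dist(x, c_0) + 1 \leq n_0$ from $x$; hence $\Ch(\sigma'') \subseteq B(x, n_0)$ and $g$ fixes $\sigma''$ pointwise. Since $\sigma$ and $\sigma''$ are parallel, $\proj_{\sigma''}$ restricts to a bijection $\Ch(\sigma) \to \Ch(\sigma'')$. For every $c' \in \Ch(\sigma)$, the fact that automorphisms commute with projections combined with $g|_{\sigma''} = \mathrm{id}$ yields $\proj_{\sigma''}(gc') = g(\proj_{\sigma''}(c')) = \proj_{\sigma''}(c')$, and injectivity forces $gc' = c'$. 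Therefore $g \in \Fix_G(\sigma)$, so $\sigma \notin \Pi$ and $i \notin I_\Pi(c)$.

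The only delicate point is to keep track of the distinction between $\sigma$ and $\sigma''$ (which need not coincide when $c \neq c_0$), and to exploit thickness to produce a non-trivial permutation of $\Ch(\sigma'')$ fixing $c_0$; everything else is direct bookkeeping with the geometric lemmas already established.
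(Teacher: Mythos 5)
Your argument is correct and follows essentially the same route as the paper: in the forward direction the paper invokes $U_i(c')$ (with $c'=\proj_{\overline\sigma}(x)$), Corollary~\ref{cor:balls} and Proposition~\ref{prop:Panel:extension} to get an element of $G(n_0)$ stabilising $\sigma$ and acting non-trivially on it, which is exactly your construction of $\tilde\alpha$; in the converse direction the paper likewise notes that the $i$-panel of $c'$ lies in $B(x,n_0)$ and uses parallelism to force the action on $\sigma$ to be trivial. The only difference is cosmetic (a single extension versus the whole group $U_i(c')$, and your explicit use of equivariance of projections, which the paper leaves implicit).
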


Let $\sigma$ be the $i$-panel of $c$, let ${\overline \sigma}$ be the $(i \cup i^\perp)$-residue of $c$ and $c' = \proj_{\overline \sigma}(x)$.  

If $\dist(x, {\overline \sigma})  = \dist(x, c') \geq n_0$, then $U_i(c')$ fixes $B(x, n_0)$ pointwise by Corollary~\ref{cor:balls}. Thus $U_i(c') \leq G(n_0) \leq U$. Since $\sigma$ is parallel to the $i$-panel of $c'$ by Proposition~\ref{prop:CriterionParallelRAB}(ii), we infer that $U_i(c')$ fixes $\proj_\sigma(x)$ and permutes arbitrarily all the other chambers of $\sigma$ by Proposition~\ref{prop:Panel:extension}. Therefore $\Stab_{G(n_0)}(\sigma) \not \leq \Fix_G(\sigma)$. Thus  $\sigma \in \Pi$ and $i \in I_\Pi(c)$. 

Assume conversely that $\dist(x, {\overline \sigma}) < n_0$. Then the $i$-panel of $c'$ lies entirely in $B(x, n_0)$ and is thus pointwise fixed by $G(n_0)$. Therefore $\Stab_{G(n_0)}(\sigma)$ acts trivially on $\sigma$, and hence $\sigma \not \in \Pi$ and $i \not \in I_\Pi(c)$. 

\begin{claim}\label{cl:I_Pispherical}
There exists $n_1 > n_0$ such that for all $c \in \Ch(X)$ with $\dist(c, x) > n_1$, we have    $I_0(c) \cap I_\Pi(c) \neq \varnothing$. 
\end{claim}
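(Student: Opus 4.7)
The plan is to argue by contradiction, relying on the equivalence between one-endedness of $W$ (part of our standing hypothesis) and condition (\ref{it:partition}) established earlier in the theorem. Suppose the claim fails; then for each $n$ there exists $c_n \in \Ch(X)$ with $\dist(x, c_n) > n$ and $I_0(c_n) \cap I_\Pi(c_n) = \varnothing$. By Claim~\ref{cl:I_Pi}, this latter condition translates to $\dist(x, \Res_{i \cup i^\perp}(c_n)) < n_0$ for every $i \in I_0(c_n)$. Since $I_0(c_n)$ is a spherical subset of $I$ and, in the right-angled setting, spherical subsets are cliques of the commutation graph—of which there are finitely many—we may pass to a subsequence in which $I_0(c_n) = J$ is constant.

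For each $i \in J$, fix an apartment $\Sigma_n$ containing $x$ and $c_n$. By convexity it also contains $e_i^{(n)} := \proj_{\Res_{i \cup i^\perp}(c_n)}(x)$, which satisfies $\dist(x, e_i^{(n)}) < n_0$. Setting $w_n := \delta(x, c_n)$, the concatenation of a minimal gallery from $x$ to $e_i^{(n)}$ with one from $e_i^{(n)}$ to $c_n$ is minimal, yielding a reduced factorisation $w_n = v_i^{(n)} u_i^{(n)}$ with $\ell(v_i^{(n)}) < n_0$ and $u_i^{(n)} \in \la i \cup i^\perp \ra$. Since no generator $s_t$ with $t \notin i \cup i^\perp$ can occur in a reduced expression for an element of $\la i \cup i^\perp \ra$, the number of walls of type $t$ separating $x$ from $c_n$ is then bounded above by $\ell(v_i^{(n)}) < n_0$. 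Letting $i$ range over $J$, and observing that $\bigcap_{i \in J}(i \cup i^\perp) = J \cup J^\perp$ (because $J$ is a commuting clique, so every $j \in J$ lies in $i \cup i^\perp$ for all $i \in J$, while $J^\perp$ consists by definition of the elements commuting with all of $J$), we conclude that for every $t \in I \setminus (J \cup J^\perp)$, fewer than $n_0$ walls of type $t$ separate $x$ from $c_n$.

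Next, $J \cup J^\perp$ is a \emph{proper} subset of $I$: otherwise $J$ would be a non-empty commuting clique that also commutes with $J^\perp = I \setminus J$, leaving no edges of the Coxeter diagram linking $J$ to the remaining vertices, in contradiction with the irreducibility of $(W, I)$. (Here $J \neq \varnothing$ because $w_n \neq 1$ forces $I_0(c_n) \neq \varnothing$.) Therefore the sum $\sum_{t \notin J \cup J^\perp} N_t(w_n)$ of wall-counts over all "forbidden" types is uniformly bounded, while $\ell(w_n) \to \infty$ forces $\sum_{t \in J \cup J^\perp} N_t(w_n) \to \infty$: almost every wall crossed by a minimal gallery from $x$ to $c_n$ carries a type in $J \cup J^\perp$.

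The final and main step is to convert this structural information into a partition of $I$ violating (\ref{it:partition}), thereby contradicting the one-endedness of $W$. The intended argument proceeds by iterating the analysis within the standard parabolic subgroup $\la J \cup J^\perp \ra \cong \la J \ra \times \la J^\perp \ra$ (the product decomposition being a feature of right-angled Coxeter groups) and using Lemma~\ref{lem:DeepCorner}: in $\Sigma_n$, the half-apartments on the $c_n$-side of the $J$-walls of $c_n$ pairwise cross (since $J$ is a clique), so iterated application of Lemma~\ref{lem:DeepCorner} exhibits walls of additional types deep in the common corner containing $c_n$. The hard part will be showing that the compatibility constraints among the factorisations $w_n = v_i^{(n)} u_i^{(n)}$ for different $i \in J$, combined with the uniform wall-count bound for types outside $J \cup J^\perp$ and the unboundedness of $\ell(w_n)$, ultimately force a clique-separator in the commutation graph of $I$—i.e. a partition $I = I_0 \sqcup I_1 \sqcup I_2$ with $I_1, I_2$ non-empty, $m_{ij} = 2$ on $I_0$, and $m_{ij} = \infty$ for $i \in I_1$, $j \in I_2$—contradicting (\ref{it:partition}).
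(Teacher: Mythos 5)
Your proposal is not a proof: everything up to and including the wall-count estimates is correct but only a preliminary reduction, and the actual contradiction is never derived. You say explicitly that "the hard part will be showing" that the constraints force a partition $I = I_0 \sqcup I_1 \sqcup I_2$ violating (\ref{it:partition}); that step is precisely the content of the claim and it is missing. The preparatory material is fine as far as it goes: the translation of $I_0(c_n)\cap I_\Pi(c_n)=\varnothing$ via Claim~\ref{cl:I_Pi} into $\dist(x,\Res_{i\cup i^\perp}(c_n))<n_0$ for $i\in J$, the identity $\bigcap_{i\in J}(i\cup i^\perp)=J\cup J^\perp$, the properness of $J\cup J^\perp$ in $I$, and the conclusion that all but boundedly many walls separating $x$ from $c_n$ have type in $J\cup J^\perp$ are all correct. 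But from there nothing is proved, and the sketched route via iterated use of Lemma~\ref{lem:DeepCorner} to produce a "clique-separator" is not substantiated.

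Moreover, the target of your intended contradiction looks misdirected. The data you have gathered constrain only the particular geodesics from $x$ to the chambers $c_n$ (in fact, since any two occurrences of a type $j\in J$ must be separated by a wall whose type does not commute with $j$, hence lies outside $J\cup J^\perp$, your estimates show that almost all the walls have type in $J^\perp$, which may well be non-spherical); such information about individual elements of $W$ does not by itself force any global splitting of $W$, and indeed the statement holds without any one-endedness assumption. The genuine tension you would have to exploit is local and combinatorial: $I_0(c_n)=J$ forbids reduced expressions of $\delta(x,c_n)$ ending in a letter of $J^\perp$, while almost all letters lie in $J^\perp$ and commute with $J$ -- and you do not carry out this analysis. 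For comparison, the paper's proof is direct and does not argue by contradiction: since in a right-angled system pairwise crossing walls belong to a spherical residue, Ramsey's theorem yields $n_1$ such that any $n_1$ walls crossed between $x$ and $c$ contain more than $n_0+1$ pairwise disjoint ones; one of these, $\mathcal W$, separates $c$ from $B(x,n_0+1)$, and the first wall $\mathcal W'$ crossed by a shortest gallery from $c$ to $\mathcal W$ is adjacent to $c$, separates $c$ from $x$, and has all adjacent chambers at distance $>n_0$ from $x$, so its type lies in $I_0(c)\cap I_\Pi(c)$ by Claim~\ref{cl:I_Pi}.
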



Since $(W, I)$ is right-angled, any collection of pairwise intersecting walls in an apartment is contained in the set of walls of a spherical residue. The cardinality of such a collection is bounded above by the largest cardinality of a spherical subset of $I$. In particular it is finite. In view of Ramsey's theorem, we infer that there is some $n_1> n_0$ such that any set of more than $n_1$ walls contains a subset of more than $n_0+1$ pairwise non-intersecting walls. 

Let now $c \in \Ch(X)$ be such that  $\dist(c, x) > n_1$ and
%
$\Sigma$ be an apartment containing $c$ and $x$. By construction there is a set of more than $n_0+1$ pairwise non-intersecting walls in $\Sigma$ that are crossed by any minimal gallery from $c$ to $x$. In particular, at least one of these walls, say $\mathcal W$, separates $c$ from the ball $B(x, n_0+1)$.

Since $(W, I)$ is right-angled, no wall crossed by a shortest possible gallery from $c$ to a chamber adjacent to $\mathcal W$ crosses $\mathcal W$. Let $\mathcal W'$ be the first wall crossed by such a gallery. Thus $\mathcal W'$ is adjacent to $c$,  and every chamber adjacent to $\mathcal W'$ is at distance $> n_0$ from $x$.

Let now  $k \in I$ be the type of the panel of $c$ which belongs to $\mathcal W'$. Since $\mathcal W'$ separates $c$ from $x$, we have $k \in I_0(c)$. Notice that $\proj_{\Res_{k \cup k^\perp}(c)}(x)$ belongs to $\Sigma$. Thus $\proj_{\Res_{k \cup k^\perp}(c)}(x)$ is a chamber of $\Sigma$ which is adjacent to the wall $\mathcal W'$. This implies that $\dist(x, \Res_{k \cup k^\perp}(c))> n_0$.   Therefore $k \in I_\Pi(c)$ by Claim~\ref{cl:I_Pi}. Thus the sets $I_0(c)$  and $I_\Pi(c)$ have a non-empty intersection, as desired.

\begin{claim}\label{cl:AB}
Let $c \in \Ch(X)$  and $\sigma$ be a panel of $c$. If $a(c) \neq c$ for some $a \in \Stab_A(\sigma)$, then $b(c) = c$ for all $b \in \Stab_B(\sigma)$, and similarly with $A$ and $B$ interchanged.
\end{claim}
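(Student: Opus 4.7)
The plan is to argue by contradiction via Lemma~\ref{lem:abelian}. Suppose there exist $a \in \Stab_A(\sigma)$ with $a(c) \neq c$ and $b \in \Stab_B(\sigma)$ with $b(c) \neq c$. Let $i$ be the type of $\sigma$ and set $Y = X_i(c)$; since distinct chambers of $\sigma$ determine disjoint $i$-wings and both $a, b$ stabilize $\sigma$ while moving $c$ off itself, we have $a(Y) \cap Y = b(Y) \cap Y = \varnothing$.

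The algebraic input exploits that $A$ and $B$ commute elementwise: for any $v \in U$ decomposed as $v = \alpha\beta$ with $\alpha \in A,\ \beta \in B$, the identity $\beta a^{-1} = a^{-1}\beta$ yields
\[
[a, v] \;=\; a\alpha\beta a^{-1}\beta^{-1}\alpha^{-1} \;=\; a\alpha a^{-1}\alpha^{-1} \;=\; [a,\alpha] \;\in\; A,
\]
and symmetrically $[b, v] = [b,\beta] \in B$. Hence every element of $[a, U]$ commutes with every element of $[b, U]$. Setting $V := V_i(c) \cap U$, Lemma~\ref{lem:abelian} applies and concludes that $V$ is abelian.

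To reach a contradiction I would exhibit two non-commuting elements of $V$. Choose $j \in I$ with $m_{i, j} = \infty$ (which exists by irreducibility of $(W, I)$) and fix an apartment $\Sigma$ through $c$ and $x$. Let $\mathcal W_0$ denote the type-$i$ wall of $\Sigma$ through $\sigma$; since $m_{i,j} = \infty$, every type-$j$ wall is disjoint from $\mathcal W_0$, and iterating reflections in the infinite dihedral subgroup $\langle s_i, s_j\rangle$ produces type-$j$ walls lying in $X_i(c) \cap \Sigma$ at arbitrary depth. Fix such a wall $\mathcal W$, chosen beyond $c$ in $X_i(c) \cap \Sigma$ and so far from $x$ that $\dist(x, \overline{\Res_j(e)}) \geq n_0$, where $e$ is the chamber of $\Sigma$ adjacent to $\mathcal W$ on the side opposite to $x$. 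Lemma~\ref{lem:inclusion}(b) then yields $X_j(e) \subseteq X_i(c)$, while Lemma~\ref{lem:balls:bis}(i) applied to $R = \{x\}$ and the residue $\overline{\Res_j(e)}$ gives $B(x, n_0) \subseteq X_j(\proj_{\Res_j(e)}(x))$, so $X_j(e'') \cap B(x, n_0) = \varnothing$ for every $e'' \in \Res_j(e) \setminus \{\proj_{\Res_j(e)}(x)\}$. Thickness $q_j > 2$ furnishes a chamber $e' \in \Res_j(e)$ distinct from both $e$ and $\proj_{\Res_j(e)}(x)$. Applying Proposition~\ref{prop:Panel:extension} to $\Res_j(e)$ with the transposition $e \leftrightarrow e'$ produces $\tilde\alpha \in \Aut(X)^+$ supported on $X_j(e) \cup X_j(e') \subseteq X_i(c) \setminus B(x, n_0)$, so $\tilde\alpha \in V$. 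Picking any non-trivial $v \in V_j(e) \leq V$ (which exists by Lemma~\ref{lem:NonAb}), the conjugate $\tilde\alpha v \tilde\alpha^{-1}$ lies in $V_j(e') \setminus \{1\}$; since $V_j(e) \cap V_j(e') = \{1\}$, the elements $\tilde\alpha$ and $v$ do not commute, contradicting the abelianness of $V$.

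The main obstacle is the geometric construction of $\mathcal W$: one must simultaneously arrange that $\mathcal W$ has type $j$, lies in $X_i(c) \cap \Sigma$ past $c$ (so that Lemma~\ref{lem:inclusion}(a) gives $X_j(e'') \subseteq X_i(c)$ for each $e'' \neq \proj_{\Res_j(e)}(x)$), and is deep enough that the full wall residue $\overline{\Res_j(e)}$ avoids $B(x, n_0)$. This is an apartment-level statement that should follow from iterated reflections in $\langle s_i, s_j\rangle$ combined with Lemma~\ref{lem:balls:bis}; once it is in place, the non-commuting pair $(\tilde\alpha, v)$ in $V$ is produced routinely by Proposition~\ref{prop:Panel:extension} and the disjointness of the supports of $V_j(e)$ and $V_j(e')$.
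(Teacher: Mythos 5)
Your argument is correct, and it is built from the same ingredients as the paper's: the commutator identity $[a,\alpha\beta]=[a,\alpha]\in A$, $[b,\alpha\beta]\in B$ for elements of $U=AB$, an application of Lemma~\ref{lem:abelian}, and a ``go deep into the $\{i,j\}$-residue'' construction (with $m_{i,j}=\infty$) to manufacture wing-fixators that avoid $B(x,n_0)$ and hence lie in $G(n_0)\leq U$. The organisation differs slightly. The paper takes a chamber $c'=(r'r)^{n_0}(c)$ deep in the $\{i,j\}$-residue on the side of $c$ away from $x$, checks $X_i(c')\subset X_i(c)$ and $X_i(c')\cap B(x,n_0)=\varnothing$, and applies Lemma~\ref{lem:abelian} directly to $V=V_i(c')$ (whose support is moved off itself by both $a$ and $b$ because $X_i(c')\subset X_i(c)$), getting an immediate contradiction with Lemma~\ref{lem:NonAb}. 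You instead apply Lemma~\ref{lem:abelian} to $V=V_i(c)\cap U$ with $Y=X_i(c)$ --- which makes the disjointness hypotheses trivial --- and then pay for it by having to exhibit a non-commuting pair inside $V$, which you do via Proposition~\ref{prop:Panel:extension} (the transposition $\tilde\alpha$ on a deep $j$-panel) together with a non-trivial $v\in V_j(e)$ from Lemma~\ref{lem:NonAb} and the disjointness of $V_j(e)$ and $V_j(e')$. Both routes work; the paper's is a bit more economical, since the non-abelianness contradiction is delivered wholesale by Lemma~\ref{lem:NonAb} rather than by an explicit pair. The step you flag as the remaining obstacle is exactly the paper's reflection construction: iterating the product of the reflections through $\sigma$ and through the $j$-panel of $c$ produces the deep type-$j$ wall, and the type-$i$ walls crossed on the way (none of which can cross a residue of type $j\cup j^\perp$, as $i\notin j\cup j^\perp$) give the estimate $\dist(x,\overline{\Res_j(e)})\geq n_0$ needed for Lemma~\ref{lem:balls:bis}/Corollary~\ref{cor:balls}; so this gap is routine to fill and matches the paper's own computation.
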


Let $i \in I$ be the type of $\sigma$. Notice that $c_0 = \proj_\sigma(x) \neq c$ since $a$ fixes $x$ and stabilises $\sigma$. Let $\Sigma$ be an apartment containing $c$ and $x$. It also contains $c_0$ by convexity. 

Since $(W, I)$ is irreducible and non-spherical,  there is $j \in I$ such that $m_{i, j}=\infty$. Let $R = \Res_{\{i, j\}}(c)$. Let $r$ be the reflection of $\Sigma$ swapping $c$ and $c_0$ and $r'$ be the reflection of $\Sigma$ through the $j$-panel of $c$. We set $c' = (r' r)^{n_0}(c)$ and $c'_0 = (r' r)^{n_0}(c_0)$. Thus $c$ and $c'$ are separated by $2n_0$ walls of the residue $R$ in $\Sigma$, and $x$ lies on the same side as $c$ of all those walls. Set $y = \proj_{\Res_{i \cup i^\perp}(c')}(x)$. Then $y$ belongs to $\Sigma$ since apartments are convex. The chambers $c', c'_0$ and $y$ are all adjacent to the wall $\mathcal W = \Sigma \cap  \Res_{i \cup i^\perp}(c')$. Moreover the chambers $x,y$ and $ c'_0$ lie on the same side of  $\mathcal W$ while $c'$ lies on the opposite side. Thus $X_i(c')$ and $X_i(c'_0)$ are disjoint, and $X_i(c'_0) = X_i(y) \supseteq B(x, n_0)$ by Lemma~\ref{lem:BasicWings}(ii) and Corollary~\ref{cor:balls}.
In particular $X_i(c') \cap  B(x, n_0) = \varnothing$ so that $V_i(c')$ fixes $B(x, n_0)$ pointwise, and is thus contained in $U$.

By construction, we have  $X_i(c') \cap \Sigma \subseteq X_i(c) \cap \Sigma$. Lemma~\ref{lem:inclusion} therefore ensures that $X_i(c') \subset X_i(c)$, whence $V_i(c') \leq V_i(c)$. In particular the support of $V_i(c')$ and its image under $a$ are disjoint.

Similarly, if $b \in \Stab_B(\sigma)$ and $b(c) \neq c$, then  the support of $V_i(c')$ and its image under $b$ are disjoint. Since $[a, V_i(c')] \leq A$ and $[b, V_i(c')] \leq B$, we deduce from Lemma~\ref{lem:abelian} that   $V_i(c')$ is abelian, in contradiction with Lemma~\ref{lem:NonAb}. Therefore $b(c) = c$ for all $b \in  \Stab_B(\sigma)$.

\begin{claim}\label{cl:f}
For each panel $\sigma \in \Pi$, there is a unique $F \in \{A, B\}$ with $\Stab_F(\sigma) \not \leq \Fix_G(\sigma)$. We denote the corresponding function by 
$$f \colon \Pi \to \{A, B\} \colon \sigma \mapsto F.$$
Moreover, the group $\Stab_{F}(\sigma)$ permutes arbitrarily the elements of $\Ch(\sigma)$ different from $\proj_\sigma(x)$ (i.e. it induces the full symmetric group on $\Ch(\sigma) - \{\proj_\sigma(x)\}$).
\end{claim}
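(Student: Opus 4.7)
My plan is to analyse the action of $H := \Stab_U(\sigma)$ on $\Ch(\sigma)$ and to extract $F$ by combining Claim~\ref{cl:AB} with the normality of $A$ and $B$ in $U$. Write $c_0 = \proj_\sigma(x)$; since $U$ fixes $x$, the subgroup $H$ fixes $c_0$, yielding an action homomorphism $\phi \colon H \to \Sym(\Ch(\sigma) \setminus \{c_0\}) \cong \Sym(q-1)$ with kernel $K := \Fix_U(\sigma)$. I first establish $\phi(H) = \Sym(q-1)$ by setting $c' = \proj_{\overline \sigma}(x)$: Lemma~\ref{lem:BasicWings}(iii) gives $X_i(c') = X_i(c_0)$ and $\proj_\sigma(c') = c_0$, while the bound $\dist(x, \overline \sigma) \geq n_0$ from Claim~\ref{cl:I_Pi} combines with Corollary~\ref{cor:balls} applied at $c'$ to yield $B(x, n_0) \subseteq X_i(c')$; hence $U_i(c') \leq G(n_0) \leq U$, and since $\sigma$ is parallel to $\Res_i(c')$ in $\overline \sigma$, Proposition~\ref{prop:Panel:extension} shows that $U_i(c')$ stabilises $\sigma$ and induces every permutation of $\Ch(\sigma) \setminus \{c_0\}$.

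For uniqueness, the commutation $[A, B] = 1$ together with $U = AB$ makes both $A$ and $B$ normal in $U$, so $\Stab_A(\sigma) = A \cap H$ and $\Stab_B(\sigma) = B \cap H$ are normal in $H$, and $\phi(\Stab_A(\sigma))$, $\phi(\Stab_B(\sigma))$ are normal subgroups of $\phi(H) = \Sym(q-1)$ that commute elementwise. By Claim~\ref{cl:AB} their supports in $\Ch(\sigma) \setminus \{c_0\}$ are disjoint. Since every non-trivial normal subgroup of $\Sym(n)$ (for $n \geq 2$) acts transitively on $\{1, \dots, n\}$, and so has full support, at most one of these two images can be non-trivial.

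For existence together with the full symmetric group conclusion, I aim to prove $\phi(H) = \phi(\Stab_A(\sigma)) \cdot \phi(\Stab_B(\sigma))$. The key observation is that if $h = ab \in H$ with $a \in A$ and $b \in B$, then $a(\sigma) = b^{-1}(\sigma) =: \tau$, and $h$ lies in $\Stab_A(\sigma) \cdot \Stab_B(\sigma)$ exactly when $\tau = \sigma$. I would examine the finite set $\Omega := A \cdot \sigma \cap B \cdot \sigma$ together with the commuting $A$- and $B$-actions on it, and use the rich supply of elements of $\Fix_G(\sigma) \cap U$ afforded by Proposition~\ref{prop:ProductDec} inside $\overline \sigma$ to rewrite any ``twisted'' decomposition modulo $K$ as a product of elements truly lying in $\Stab_A(\sigma)$ and $\Stab_B(\sigma)$. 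Combined with the uniqueness step, the identity $\phi(H) = \Sym(q-1) \neq 1$ then forces one of $\phi(\Stab_A(\sigma))$, $\phi(\Stab_B(\sigma))$ to equal all of $\Sym(q-1)$, giving both the existence of $F$ and the claimed full symmetric action on $\Ch(\sigma) \setminus \{c_0\}$.

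The main obstacle is this decomposition step: the factorisation $h = ab$ is only unique modulo the central subgroup $A \cap B \leq Z(U)$, and elements $h$ with $\tau \neq \sigma$ require a consistent choice of a modifying element in $K$ to absorb the twist. One must verify that these corrections do not introduce spurious non-trivial actions on $\sigma$, which is where one leans on the explicit product decomposition of $\Fix_G(\overline \sigma)$ into its $V_i(d)$-factors from Proposition~\ref{prop:ProductDec}, and on the fact that each such factor lies in $K$.
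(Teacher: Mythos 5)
Your first step (that $\Stab_U(\sigma)$ induces the full symmetric group on $\Ch(\sigma)\setminus\{\proj_\sigma(x)\}$ via $U_i(c')\leq G(n_0)$, Claim~\ref{cl:I_Pi}, Corollary~\ref{cor:balls} and Proposition~\ref{prop:Panel:extension}) and your uniqueness step (disjoint supports from Claim~\ref{cl:AB}, plus the fact that $\Stab_A(\sigma)$, $\Stab_B(\sigma)$ are normal in $\Stab_U(\sigma)$ and every non-trivial normal subgroup of a symmetric group on $\geq 2$ letters has full support) are both sound and consistent with the paper. But the heart of the claim -- existence of $F$, and the fact that $\Stab_F(\sigma)$ itself induces the \emph{full} symmetric group -- rests in your write-up on the identity $\phi(H)=\phi(\Stab_A(\sigma))\cdot\phi(\Stab_B(\sigma))$, which you do not prove. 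The sketched plan (analysing $\Omega=A\cdot\sigma\cap B\cdot\sigma$ and ``absorbing the twist'' by elements of $\Fix_U(\sigma)$ supplied by Proposition~\ref{prop:ProductDec}) is not an argument: multiplying $h=ab$ by some $k\in \Fix_U(\sigma)$ only changes the decomposition by the $A$- and $B$-parts of $k$, and nothing you invoke controls where those parts send $\sigma$; you face exactly the same stabilisation problem for $k$ that you started with for $h$, and you yourself flag this as the main obstacle. So there is a genuine gap precisely at the step the claim is about.

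The paper closes this gap with a concrete device you are missing: a propagation argument along a minimal gallery. Since $\sigma\in\Pi$, pick $u\in\Stab_{G(n_0)}(\sigma)$ moving some chamber of $\sigma$, write $u=ab$ with $a\in A$, $b\in B$, and take a gallery $x_0,\dots,x_k$ of minimal length from $B(x,n_0)$ to $\Ch(\sigma)$; minimality forces $x_k=\proj_\sigma(x_0)$, so $u$ fixes every $x_\ell$. If $a$ moved some $x_\ell$, then so would $b$ (as $u$ fixes it), and at the smallest such index both $a$ and $b$ stabilise the panel between $x_{\ell-1}$ and $x_\ell$ while moving $x_\ell$, contradicting Claim~\ref{cl:AB}. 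Hence $a$ and $b$ fix $x_k$ and, being type-preserving, both stabilise $\sigma$; since $u=ab$ acts non-trivially on $\Ch(\sigma)$, one of $\Stab_A(\sigma)$, $\Stab_B(\sigma)$ does too, giving existence. The same decomposition, applied to the elements of $U_i(c')\leq\Stab_{G(n_0)}(\sigma)$, shows that once (say) $\Stab_B(\sigma)\leq\Fix_G(\sigma)$, every permutation of $\Ch(\sigma)\setminus\{\proj_\sigma(x)\}$ realised by $U_i(c')$ is already realised by $\Stab_A(\sigma)$, which yields the ``Moreover'' statement; note that your uniqueness argument alone could not give this, since a non-trivial normal subgroup of $\Sym(q-1)$ need not be the whole group. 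You would need to incorporate this gallery-plus-Claim~\ref{cl:AB} induction (or a genuine substitute for it) for the proof to be complete.
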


Let $\sigma \in \Pi$. By definition there is $u \in \Stab_{G(n_0)}(\sigma)$ and $c \in \Ch(\sigma)$ with $u(c) \neq c$. Write $u = ab$ with $a \in A$ and $b \in B$. Consider a gallery $x_0, x_1, \dots, x_k $  of minimal possible length joining a chamber in $B(x, n_0)$ to a chamber in $\Ch(\sigma)$. Since $u \in G(n_0)$, it fixes $x_0$ and by the minimality of the gallery, we have $x_k = \proj_\sigma(x_0)$, so that $u$ fixes $x_k$ as well. Therefore $u$ fixes $x_i$ for all $i$. 

We claim that $a$ and $b$ both fix $x_i$ for all $i$. Otherwise there is some $i$ such that  $a(x_i) \neq x_i$. Since $u(x_i)=x_i$ and $u =ab$, we must have $b(x_i) \neq x_i$. If $i$ the smallest such index, then $a$ and $b$ also fix $x_{i-1}$ and thus both stabilise the panel shared by $x_{i-1}$ and $x_i$. This contradicts Claim~\ref{cl:AB}. 

It follows that $a$ and $b$ both fix $x_k$ and hence stabilise $\sigma$. As $ab = u \not \in \Fix_G(\sigma)$,   we have $\Stab_A(\sigma) \not \leq \Fix_G(\sigma)$ or $\Stab_B(\sigma) \not \leq \Fix_G(\sigma)$. It remains to show that these two possibilities are mutually exclusive. Let $\Ch_A$ and $\Ch_B$ be the subsets of $\Ch(\sigma)$ that are not fixed by $A$ and $B$ respectively. The previous claim guarantees that $\Ch_A$ and $\Ch_B$ are disjoint, and thus they are both stabilised by $A$ and $B$ and hence by $U$. 

Let $i \in I$ be the type of $\sigma$ and $c' = \proj_\sigma(x)$. Since $\sigma \in \Pi$, we have $i \in I_\Pi$ and $U_i(c') \leq G(n_0) \leq U$ by Claim~\ref{cl:I_Pi} and Corollary~\ref{cor:balls}. Consequently the group $\Stab_U(\sigma)$ permutes arbitrarily the set $\Ch(\sigma) - \{c'\}$ by Proposition~\ref{prop:Panel:extension}. Since $\Ch_A$ and $\Ch_B$ are disjoint and $U$-invariant, it follows that either $\Ch_A$ or $\Ch_B$ coincides with the whole of $\Ch(\sigma) - \{c'\}$.

\begin{claim}\label{cl:ij=2}
Let $c \in \Ch(X)$ and $i, j \in I$ with $m_{i, j} = 2$. Let $\sigma_i$ and $\sigma_j$ be the $i$- and $j$-panels of $c$ respectively. If $\sigma_i$ and $\sigma_j$ belong to $\Pi$, then $f(\sigma_i) = f(\sigma_j)$. 
\end{claim}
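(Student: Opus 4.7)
The plan is a proof by contradiction. Suppose $f(\sigma_i)=A$ and $f(\sigma_j)=B$.

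First, observe that $\Stab_A(\sigma_i)$ and $\Stab_B(\sigma_j)$ both preserve the $\{i,j\}$-residue $R_{ij}=\Res_{\{i,j\}}(c)$, since any automorphism stabilising $\sigma_i$ (resp. $\sigma_j$) preserves the unique $\{i,j\}$-residue containing it. Using the hypothesis $m_{i,j}=2$, Lemma~\ref{lem:ProductRes} supplies the product decomposition $\Ch(R_{ij})\cong\Ch(\sigma_i)\times\Ch(\sigma_j)$, and every type-preserving automorphism of $R_{ij}$ acts as a pair $(\alpha,\beta)\in\Sym(\Ch(\sigma_i))\times\Sym(\Ch(\sigma_j))$. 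By Claim~\ref{cl:f}, the first-coordinate projection of $\Stab_A(\sigma_i)$ contains the full symmetric group on $\Ch(\sigma_i)-\{c_i\}$, and the second-coordinate projection of $\Stab_B(\sigma_j)$ contains the full symmetric group on $\Ch(\sigma_j)-\{c_j\}$. A centralizer argument exploiting $[A,B]=1$ together with the thickness $q_i,q_j\geq 3$ then forces the second-coordinate action of every element of $\Stab_A(\sigma_i)$ to be trivial, and symmetrically for $\Stab_B(\sigma_j)$; the key point is that the centralizer in $\Sym(\Ch(\sigma_j))$ of the full symmetric group on $\Ch(\sigma_j)-\{c_j\}$, among permutations fixing both $c$ and $c_j$, is trivial as soon as that group acts on at least two points.

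Second, since $(W,I)$ is irreducible non-spherical, pick $k\in I$ with $m_{i,k}=\infty$. Imitating the construction in the proof of Claim~\ref{cl:AB}, inside the $\{i,k\}$-residue of $c$ build a chamber $c'$ by applying the composition of the two dihedral reflections (through the $i$-panel of $c$ and the $k$-panel of $c$) $n_0$ times. This guarantees $c'\in X_i(c)$ and $X_i(c')\cap B(x,n_0)=\varnothing$, so $V=V_i(c')\leq G(n_0)\leq U$ is a non-abelian subgroup by Lemma~\ref{lem:NonAb}. Next, pick $a\in\Stab_A(\sigma_i)$ moving $c$ (possible by Claim~\ref{cl:f} when $c\neq c_i$; the case $c=c_i$ is handled by passing to another chamber of $\sigma_i$ and shifting notation), and similarly $b\in\Stab_B(\sigma_j)$ moving $c$.

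Finally, since $c'\in X_i(c)$ and $a$ stabilises $\sigma_i$, one has $a(X_i(c'))\subseteq X_i(a(c))$, which is disjoint from $X_i(c)\supseteq X_i(c')$, so $a(X_i(c'))\cap X_i(c')=\varnothing$. Verifying the analogous displacement $b(X_i(c'))\cap X_i(c')=\varnothing$ is the main obstacle, since $b$ in general does not stabilise $\sigma_i$; one has to chase the action of $b$ on $c'$ through the $\{j,k\}$-direction of the building, using Proposition~\ref{prop:Panel:extension} to realise $b$ as a well-chosen extension of $\beta_b$, and then apply Lemma~\ref{lem:inclusion} (together with the analysis of Step 1) to argue that $b(c')$ lands in an $i$-wing distinct from $X_i(c')$. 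Once both displacements are verified, the identities $[A,U]\leq A$, $[B,U]\leq B$, and $[A,B]=1$ imply that every element of $[a,V]$ commutes with every element of $[b,V]$, so Lemma~\ref{lem:abelian} forces $V$ to be abelian, contradicting Lemma~\ref{lem:NonAb}.
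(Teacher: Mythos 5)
Your overall skeleton (produce a subgroup $V$ of $U$ whose support is displaced off itself by both $a$ and $b$, then contradict Lemma~\ref{lem:NonAb} via Lemma~\ref{lem:abelian}) is the right one, but the execution has a genuine gap at precisely the step you flag as ``the main obstacle'', and that step is not a technical verification one can chase through: it is where the real difficulty of the case $m_{i,j}=2$ lies. Your $V=V_i(c')$ sits deep in the $i$-direction inside $X_i(c)$. Since $m_{i,j}=2$, i.e.\ $j\in i^\perp$, the element $b$ moves $c$ to a $j$-adjacent chamber, so $b(c)\in X_i(c)\cap\Res_{i\cup i^\perp}(c)$ and Lemma~\ref{lem:BasicWings}(ii) gives $b(X_i(c))=X_i(b(c))=X_i(c)$: the element $b$ preserves the $i$-wing of $c$ \emph{setwise}, and nothing constrains how it acts inside. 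In particular nothing prevents $b$ from fixing $X_i(c')$ pointwise, in which case $Y\cap b(Y)\neq\varnothing$ (indeed $[b,V]=1$) and Lemma~\ref{lem:abelian} is inapplicable; your proposed fix via Proposition~\ref{prop:Panel:extension} does not help, because $b$ is a \emph{given} element of $B$, not an extension you get to choose, and Lemma~\ref{lem:inclusion} is of no use for the two relevant walls since they cross (its hypothesis explicitly excludes $m_{i,i'}=2$). What is needed is a group supported simultaneously inside an $i$-wing displaced by $a$ \emph{and} a $j$-wing displaced by $b$, i.e.\ inside a corner of two crossing walls; the paper obtains such a group $V_k(c'')$ from a half-apartment $H''$ lying properly inside that corner, and the existence of $H''$ is exactly Lemma~\ref{lem:DeepCorner} (the essentiality/rank-rigidity input from CAT(0) cube complexes), which your argument never invokes and for which you offer no substitute.

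Two further points. First, your reduction of the degenerate cases is not legitimate: the claim makes no assumption on $\proj_{\sigma_i}(x)$ or $\proj_{\sigma_j}(x)$ (and in the later application inside Claim~\ref{cl:constant} one of them \emph{does} equal $c$, possibly both); since every element of $U$ fixes $x$, no element of $A$ or $B$ can move $c$ in that case, and ``passing to another chamber of $\sigma_i$'' changes the base chamber, so $\sigma_j$ is no longer one of its panels. The paper instead passes to $c'=\proj_R(x)$ in the $\{i,j\}$-residue $R$ and uses the equivariance of projections between parallel panels to transfer the values of $f$; there $a$ moves an $i$-neighbour $c'_i$ of $c'$ and $b$ moves a $j$-neighbour $c'_j$, two \emph{different} chambers, which is exactly the configuration your single $V_i(c')$ cannot handle. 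Second, the centraliser assertion in your Step~1 is false when a panel has thickness $3$: the full symmetric group on $\Ch(\sigma_j)-\{\proj_{\sigma_j}(x)\}$ then acts on two points and is abelian, so its centraliser among permutations fixing $\proj_{\sigma_j}(x)$ is not trivial; in any case that step does not repair the displacement gap above.
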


Suppose for a contradiction that $f(\sigma_i) = A$ and $f(\sigma_j)=B$. Then there exist $a \in A$, $b \in B$ stabilising respectively $\sigma_i$ and $\sigma_j$, and such that  $a(c_i) \neq c_i$ and $b(c_j) \neq c_j$ for some  $c_i \in \Ch(\sigma_i)$ and $c_j \in \Ch(\sigma_j)$.

Let $R$ be the $\{i, j\}$-residue of $c$ and set $c' = \proj_R(x)$. Let also $\sigma'_i$ and $\sigma'_j$ be the  $i$- and $j$-panels of $c'$. Then  $a$ and $b$ both fix $c'$ and  stabilise  $\sigma'_i$ and $\sigma'_j$. Moreover $\sigma'_i$ and $\sigma'_j$ are respectively parallel to $\sigma_i$ and $\sigma_j$. Set $c'_i = \proj_{\sigma'_i}(c_i)$ and $c'_j = \proj_{\sigma'_j}(c_j)$. Then $a(c'_i) \neq c'_i$ and $b(c'_j) \neq c'_j$. 
Therefore we have $f(\sigma_i) = f(\sigma'_i)$ and $f(\sigma_j) = f(\sigma'_j)$. 

Let $\Sigma$ be an apartment containing $x$ and $c'$. By Claim~\ref{cl:I_Pi} and Corollary~\ref{cor:balls}, the ball $B(x, n_0)$ is contained in $X_i(c') \cap X_j(c')$. From Lemma~\ref{lem:U}, we deduce that there is some $g \in G(n_0) \leq U$ mapping $\Sigma$ to an apartment containing $c'_i$ and $c'_j$. Upon replacing $\Sigma$ by $g(\Sigma)$, we can thus assume that $\Sigma$ is an apartment containing the chambers $x, c', c'_i$ and $c'_j$.  

Let $H$ (resp. $H'$) be the half-apartment of $\Sigma$ containing $c'_i$ (resp. $c'_j$) but not $c'$. Since $(W, I)$ is irreducible and non-spherical, there is a half-apartment $H''$ which is entirely contained in $H \cap H'$ by Lemma~\ref{lem:DeepCorner}. Let $c''$ be a chamber of $H''$ having a panel in the wall determined by $H''$, and let $k \in I$ be the type of that panel. Since $H'' \subset H \cap H'$, we deduce from Lemma~\ref{lem:inclusion}   that $X_k(c'') \subseteq X_i(c'_i) \cap X_j(c'_j)$. In particular we have $V_k(c'') \leq V_i(c'_i) \cap V_j(c'_j) \leq G(n_0) \leq U$. 

We see that the support of $V_k(c'')$ and its image under $a$ are disjoint. Similarly, the support of $V_k(c'')$ and its image under $b$ are disjoint. Since   $[a, V_k(c'')] \leq A$ and   $[b, V_k(c'')] \leq B$, we deduce from Lemma~\ref{lem:abelian}  that $V_k(c'')$ is abelian, in contradiction with Lemma~\ref{lem:NonAb}. The claim stands proven.

\begin{claim}\label{cl:ij=infty}
Let $c \in \Ch(X)$ and $i, j \in I$ with $m_{i, j} = \infty$. Let $\sigma_i$ and $\sigma_j$ be the $i$- and $j$-panels of $c$ respectively. If $\sigma_i$ and $\sigma_j$ belong to $\Pi$, and if $ \proj_{\sigma_i}(x) \neq c$, then $f(\sigma_i) = f(\sigma_j)$. 
\end{claim}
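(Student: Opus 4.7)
The plan is to adapt the argument of Claim~\ref{cl:ij=2}, with the main obstacle being that the walls through $\sigma_i$ and $\sigma_j$ fail to cross when $m_{ij} = \infty$, so Lemma~\ref{lem:DeepCorner} cannot be applied to those two walls. The hypothesis $\proj_{\sigma_i}(x) \neq c$ circumvents this: it lets $a$ move $c$ itself, and Lemma~\ref{lem:inclusion}(a) then collapses the double-wing condition appearing in Claim~\ref{cl:ij=2} to a single wing $X_j(c_j)$ (for $c_j$ chosen off $c$), inside which Lemma~\ref{lem:DeepCorner} can be applied to $\mathcal W_j$ together with a wall $\mathcal W_k$ of type $k \in j^\perp$.

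Assume for contradiction $f(\sigma_i) = A$ and $f(\sigma_j) = B$, and set $c_0 = \proj_{\sigma_i}(x)$, so $c_0 \neq c$ by hypothesis. By Claim~\ref{cl:f} and thickness $q_i \geq 3$, choose $a \in \Stab_A(\sigma_i)$ with $a(c) \neq c$. Set $c'_j = \proj_{\sigma_j}(x)$; thickness $q_j \geq 3$ ensures that $\Ch(\sigma_j) \setminus \{c, c'_j\} \neq \varnothing$, so by Claim~\ref{cl:f} one can pick $c_j$ there and $b \in \Stab_B(\sigma_j)$ with $b(c_j) \neq c_j$. Since $c_j$ is $j$-adjacent to $c$ with $c_j \neq c$ and $m_{ij} = \infty$, Lemma~\ref{lem:inclusion}(a) yields $X_j(c_j) \subseteq X_i(c)$.

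The standing hypothesis (\ref{it:Xone-end}) implies (\ref{it:partition}) (already proved), which in turn forces $j^\perp \neq \varnothing$ (otherwise $I = \varnothing \sqcup \{j\} \sqcup (I \setminus \{j\})$ would be a forbidden partition). Fix $k \in j^\perp$, choose an apartment $\Sigma$ containing $c_j$, and let $\mathcal W_j$ and $\mathcal W_k$ be the walls of $\Sigma$ through $\sigma_j$ and $\Res_k(c_j)$ respectively. These cross since $m_{jk} = 2$. Applying Lemma~\ref{lem:DeepCorner} with $H = X_j(c_j) \cap \Sigma$ and $H' = X_k(c_j) \cap \Sigma$ yields a half-apartment $H'' \subsetneq H \cap H'$; take $c''$ to be a chamber of $H''$ adjacent to its bounding wall, and let $\ell$ be the type of that panel. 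Then $X_\ell(c'') \cap \Sigma = H'' \subseteq H$, so by Lemma~\ref{lem:inclusion}(b), $X_\ell(c'') \subseteq X_j(c_j) \subseteq X_i(c)$.

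For any $y \in X_\ell(c'') \subseteq X_i(c)$, Lemma~\ref{lem:concat} applied across $\overline{\sigma_i}$ (since $x \in X_i(c_0) \neq X_i(c)$) together with Lemma~\ref{lem:BasicWings}(iii) gives $\dist(x, y) \geq \dist(x, \overline{\sigma_i}) + 1 \geq n_0 + 1$; the $+1$ comes from $\proj_{\overline{\sigma_i}}(x)$ and $\proj_{\overline{\sigma_i}}(y)$ lying in the distinct $i^\perp$-residues $\Res_{i^\perp}(c_0)$ and $\Res_{i^\perp}(c)$, and $\dist(x, \overline{\sigma_i}) \geq n_0$ is Claim~\ref{cl:I_Pi}. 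Hence $V_\ell(c'') \leq G(n_0) \leq U$. Since $A$ and $B$ commute, $[a, V_\ell(c'')] \leq A$ and $[b, V_\ell(c'')] \leq B$ commute. The conditions $a(c) \neq c$ and $b(c_j) \neq c_j$ give $a(X_\ell(c'')) \cap X_\ell(c'') = \varnothing = b(X_\ell(c'')) \cap X_\ell(c'')$, so Lemma~\ref{lem:abelian} forces $V_\ell(c'')$ to be abelian, contradicting Lemma~\ref{lem:NonAb} (applicable because $\ell \cup \ell^\perp \neq I$ by irreducibility of $(W, I)$).
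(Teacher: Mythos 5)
Your proof is correct, and its skeleton (contradiction via Claim~\ref{cl:f}, the choice of $a$ moving $c$ and of $b$ moving some $c_j\in\Ch(\sigma_j)\setminus\{c,\proj_{\sigma_j}(x)\}$, the inclusion $X_j(c_j)\subseteq X_i(c)$ from Lemma~\ref{lem:inclusion}, and the final commutator trick via Lemma~\ref{lem:abelian} against Lemma~\ref{lem:NonAb}) is exactly the paper's. The one genuine difference is your detour through Lemma~\ref{lem:DeepCorner}, and it is superfluous: your own distance estimate (Lemma~\ref{lem:concat} across $\overline{\sigma_i}$, using $\proj_{\sigma_i}(x)=c_0\neq c$ and $\dist(x,\overline{\sigma_i})\geq n_0$) already shows that the whole wing $X_i(c)$, hence in particular $X_j(c_j)$, is disjoint from $B(x,n_0)$; thus $V_j(c_j)\leq G(n_0)\leq U$, and since $a\bigl(X_j(c_j)\bigr)\subseteq X_i(a(c))$ and $b\bigl(X_j(c_j)\bigr)=X_j(b(c_j))$ are both disjoint from $X_j(c_j)$, Lemma~\ref{lem:abelian} can be applied directly to $V_j(c_j)$. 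That is what the paper does (it obtains the disjointness instead from $B(x,n_0)\subseteq X_i(c_0)$ via Claim~\ref{cl:I_Pi} and Corollary~\ref{cor:balls}, together with $X_i(c)\cap X_i(c_0)=\varnothing$; your estimate is an equivalent route). Passing to the deeper wing $X_\ell(c'')$ buys you nothing and costs the extra input $j^\perp\neq\varnothing$, which you have to extract from condition~(\ref{it:partition}); the direct argument needs no such hypothesis. Lemma~\ref{lem:DeepCorner} is really only needed in Claim~\ref{cl:ij=2}, where the walls do cross and the hypothesis $\proj_{\sigma_i}(x)\neq c$ is unavailable; the whole point of the present claim is that this hypothesis lets one dispense with it.
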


Suppose for a contradiction that $f(\sigma_i) = A$ and $f(\sigma_j)=B$ (the case  $f(\sigma_j) =A$ and $f(\sigma_i)=B$ is treated similarly). In view of Claim~\ref{cl:f} and the fact that $c' = \proj_{\sigma_i}(x) \neq c$, we can find $a \in A$, $b \in B$ and $c_j \in \Ch(\sigma_j)$ such that $a(c) \neq c$ and $b(c_j) \neq c_j$. 

By Claim~\ref{cl:I_Pi} and Corollary~\ref{cor:balls}, the ball $B(x, n_0)$ is contained in $X_i(c')$. By Lemma~\ref{lem:inclusion} we have $X_i(c) \supset X_j(c_j)$. In particular $X_j(c_j)$ is disjoint from $B(x, n_0)$, from which it follows that $V_j(c_j)$ is contained in $U$. Therefore  $[a, V_j(c_j)] \subseteq A$ and $[b, V_j(c_j)] \subseteq B$. Since moreover $a$ (resp. $b$) maps the support of $V_j(c_j)$ to a disjoint subset. As before, Lemma~\ref{lem:abelian} then implies that   $V_j(c_j)$ is abelian, in contradiction with Lemma~\ref{lem:NonAb}.

\begin{claim}\label{cl:constant}
Let $c \in \Ch(X)$, let $i, j \in I$ and let  $\sigma_i$ and $\sigma_j$ be the $i$- and $j$-panels of $c$ respectively. If   $\sigma_i$ and $\sigma_j$ belong to $\Pi$, and if $\dist(c, x) > n_1$, then $f(\sigma_i) = f(\sigma_j)$. 
\end{claim}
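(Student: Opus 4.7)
The plan is to combine Claims~\ref{cl:ij=2} and~\ref{cl:ij=infty} with Claim~\ref{cl:I_Pispherical} in order to cover the only remaining case, namely $m_{i, j} = \infty$ with $\proj_{\sigma_i}(x) = \proj_{\sigma_j}(x) = c$, which is the one configuration not directly addressed by the two previous claims.

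First, since $\dist(c, x) > n_1$, Claim~\ref{cl:I_Pispherical} ensures that $I_0(c) \cap I_\Pi(c)$ is non-empty. I would fix some $k$ in this intersection and set $\sigma_k = \Res_k(c)$; by the very definitions of $I_\Pi(c)$ and $I_0(c)$ we then have $\sigma_k \in \Pi$ and $\proj_{\sigma_k}(x) \neq c$. The strategy is to use $\sigma_k$ as a pivot and establish $f(\sigma_i) = f(\sigma_k) = f(\sigma_j)$, from which the claim follows immediately.

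To show $f(\sigma_i) = f(\sigma_k)$, I would distinguish cases on $m_{i, k}$. If $i = k$ the equality is trivial. Otherwise, since $(W, I)$ is right-angled, either $m_{i, k} = 2$ or $m_{i, k} = \infty$. In the first case, Claim~\ref{cl:ij=2} applies directly to the pair $(\sigma_i, \sigma_k)$. In the second case, I would invoke Claim~\ref{cl:ij=infty} with the roles of $i$ and $j$ in that claim's statement interchanged, i.e.\ applied to the ordered pair $(\sigma_k, \sigma_i)$; the hypothesis $\proj_{\sigma_k}(x) \neq c$ needed by that claim is precisely the condition secured by our choice of $k$. An identical argument, with $j$ in place of $i$, yields $f(\sigma_j) = f(\sigma_k)$, and the desired conclusion $f(\sigma_i) = f(\sigma_j)$ follows by transitivity of equality.

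This is essentially a bookkeeping step: all the substantive content has been established in Claims~\ref{cl:ij=2}, \ref{cl:ij=infty} and~\ref{cl:I_Pispherical}. I do not foresee any genuine obstacle; the only subtlety is that Claim~\ref{cl:ij=infty} is asymmetric in its two panels, so one must be careful to insert the pivot $\sigma_k$ on the side where the projection condition $\proj_{\sigma_k}(x) \neq c$ holds.
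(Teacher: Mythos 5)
Your proposal is correct and follows essentially the same route as the paper: reduce to the case $\proj_{\sigma_i}(x)=\proj_{\sigma_j}(x)=c$, pick $k \in I_0(c)\cap I_\Pi(c)$ via Claim~\ref{cl:I_Pispherical}, and use $\sigma_k$ as a pivot through Claim~\ref{cl:ij=2} or Claim~\ref{cl:ij=infty} according as $m_{i,k}=2$ or $\infty$. Your explicit remark about orienting Claim~\ref{cl:ij=infty} so that the projection condition sits on the pivot panel is exactly the point the paper leaves implicit.
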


It suffices to deal with the case when $\proj_{\sigma_i}(x) = \proj_{\sigma_j}(x)=c$, since the other cases are dealt with by Claims~\ref{cl:ij=2} and~\ref{cl:ij=infty}. 



Since $\dist(c, x) > n_1$, there is some $k \in I_0(c) \cap I_\Pi(c)$ by Claim~\ref{cl:I_Pispherical}. Let  $\sigma_k$ be the $k$-panel of $c$. Invoking Claim~\ref{cl:ij=2} or Claim~\ref{cl:ij=infty} according as $m_{i, k} = 2$ or $m_{i, k} = \infty$, we infer that $f(\sigma_i)= f(\sigma_k)$. Similarly $f(\sigma_j)= f(\sigma_k)$, so that $f(\sigma_i)= f(\sigma_j)$ and we are done.


\bigskip
Notice that by Claim~\ref{cl:I_Pispherical}, every chamber $c$ at distance~$>n_1$ from $x$ has a panel belonging to $\Pi$. Moreover the map $f$ takes the same value on all these panels by Claim~\ref{cl:constant}. We shall denote this common value by $f(c)$. 

\begin{claim}
Let $c, c' \in \Ch(X)$ be two adjacent chambers both at distance~$ > n_1$ from $x$. Then $f(c) = f(c')$. 
\end{claim}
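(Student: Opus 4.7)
My plan splits on whether the common panel $\sigma$ of type $i$ shared by $c$ and $c'$ lies in $\Pi$. The easy case is $\sigma \in \Pi$: then $i \in I_\Pi(c) \cap I_\Pi(c')$, and by the definition of $f(c)$ and $f(c')$ as the common value of $f$ on $\Pi$-panels through each chamber, both equal $f(\sigma)$, so we are done.

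The substantive case is $\sigma \notin \Pi$, which by Claim~\ref{cl:I_Pi} forces $\dist(x, \overline\sigma) < n_0$, while $c, c' \in \overline\sigma$ have distance $>n_1$ from $x$ and hence sit deep inside the wall-residue $\overline\sigma$. Here I would argue by contradiction, assuming $f(c) = A$ and $f(c') = B$. By Claim~\ref{cl:I_Pispherical}, pick $j \in I_0(c) \cap I_\Pi(c)$ and $j' \in I_0(c') \cap I_\Pi(c')$; set $\tau := \Res_j(c)$ and $\tau' := \Res_{j'}(c')$, both in $\Pi$, with $f(\tau) = A$ and $f(\tau') = B$. By Claim~\ref{cl:f}, pick $a \in \Stab_A(\tau)$ moving some $e \in \Ch(\tau) \setminus \{\proj_\tau(x)\}$ and $b \in \Stab_B(\tau')$ moving some $e' \in \Ch(\tau') \setminus \{\proj_{\tau'}(x)\}$.

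The plan is now to mimic the strategy of Claims~\ref{cl:ij=2} and~\ref{cl:ij=infty}: locate a chamber $z$ and a type $k \in I$ such that $X_k(z)$ is disjoint from $B(x, n_0)$ (ensuring $V_k(z) \leq G(n_0) \leq U$) and additionally $a(X_k(z)) \cap X_k(z) = \varnothing = b(X_k(z)) \cap X_k(z)$. Since $(W,I)$ is irreducible and non-spherical, $k$ can be chosen so that $k \cup k^\perp \neq I$, and then Lemma~\ref{lem:NonAb} yields $V_k(z)$ non-abelian; on the other hand, Lemma~\ref{lem:abelian} applied with $V = V_k(z)$ (using that $[a, V] \subseteq A$ and $[b, V] \subseteq B$ commute elementwise, as $A$ and $B$ commute) would force $V_k(z)$ to be abelian, a contradiction. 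To construct $z$ and $k$, I would first bring $x, c, c', \proj_\tau(x), \proj_{\tau'}(x)$ into a common apartment $\Sigma$ via an extension of Lemma~\ref{lem:U} along the lines of the proof of Proposition~\ref{prop:StrongTrans}; then apply Lemma~\ref{lem:DeepCorner} to the walls of $\Sigma$ bounding the wings associated to $\tau$ and $\tau'$ to extract a half-apartment $H''$ properly contained in their intersection, and choose $z$ on the bounding wall of $H''$ with $k$ the type of the corresponding panel. Lemma~\ref{lem:inclusion} then delivers the required wing inclusions.

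The main obstacle will be that $\tau$ and $\tau'$ sit on the distinct (albeit adjacent) chambers $c$ and $c'$, unlike in Claims~\ref{cl:ij=2} and~\ref{cl:ij=infty} where both panels share a common chamber. A case analysis on the Coxeter relations $m_{i,j}$, $m_{i,j'}$, and $m_{j,j'}$ may be required to set up the deep-corner configuration cleanly; nevertheless, the combination of $\sigma \notin \Pi$ (so $\overline\sigma$ lies near $x$) with the depth of $c$ and $c'$ inside $\overline\sigma$ should provide the geometric room needed for Lemma~\ref{lem:DeepCorner} to deliver the desired half-apartment in each subcase.
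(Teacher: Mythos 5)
Your easy case ($\sigma \in \Pi$) is exactly the paper's, but in the substantive case $\sigma \notin \Pi$ there are two genuine gaps in the construction you outline. First, the apartment you want generally does not exist: if $\proj_\sigma(x) \notin \{c, c'\}$ (which nothing excludes, since $\sigma$ is thick), then any apartment containing $x$ and $c$ contains $\proj_\sigma(x)$ by convexity, and being thin it cannot also contain $c'$; so no strengthening of Lemma~\ref{lem:U} in the spirit of Proposition~\ref{prop:StrongTrans} can put $x, c, c'$ (let alone $\proj_\tau(x)$, $\proj_{\tau'}(x)$) into a common apartment --- the configuration itself is impossible, not merely hard to reach. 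Second, even if you drop $x$ and work in an apartment $\Sigma$ containing only $c$ and $c'$ (which would suffice for $V_k(z) \leq G(n_0)$, since $B(x,n_0)$ lies in the $j$-wing of $\proj_\tau(x)$, disjoint from $X_j(c)$), Lemma~\ref{lem:DeepCorner} only covers the case where the walls of $\tau$ and $\tau'$ in $\Sigma$ \emph{cross}. When they are disjoint (e.g.\ $m_{j,j'} = \infty$, or $j = j'$), the intersection of the two half-apartments $H \ni c$ and $H' \ni c'$ can be the slab between two disjoint walls, and nothing you cite produces a half-apartment inside such a slab (it need not contain one). Your remark that ``a case analysis on the Coxeter relations may be required'' points exactly at the step where the proof is missing; the contradiction scheme via Lemma~\ref{lem:abelian} against Lemma~\ref{lem:NonAb} is indeed the engine of Claims~\ref{cl:ij=2} and~\ref{cl:ij=infty}, but here the required geometric configuration is not established.

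The paper's proof of this claim avoids the whole construction and is worth internalising: with $j$ the type of the shared panel $\sigma \notin \Pi$ and $i \in I_0(c) \cap I_\Pi(c)$ furnished by Claim~\ref{cl:I_Pispherical}, one必ecessarily has $m_{i,j} = 2$ --- otherwise Lemma~\ref{lem:inclusion} gives $X_i(d) \subset X_j(c)$ for $d = \proj_{\Res_i(c)}(x)$, forcing $\dist(x, \Res_{j \cup j^\perp}(c)) \geq n_0$ and hence $\sigma \in \Pi$ by Claim~\ref{cl:I_Pi}, a contradiction. Then the $i$-panels of $c$ and $c'$ are opposite in the spherical $\{i,j\}$-residue, hence parallel, so an element of $G(n_0)$ stabilises and acts non-trivially on one if and only if it does so on the other; this transfers the value of $f$ from $c$ to $c'$ directly, with no commutator argument and no apartment surgery. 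If you wish to salvage your route, the forced relation $m_{i,j}=2$ (or the parallelism it yields) appears to be precisely the observation you are missing to dispose of the non-crossing-wall case.
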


Let $\sigma$ be the panel shared by $c$ and $c'$. If $\sigma \in \Pi$ then we are done by the previous claim.  We assume henceforth that $\sigma \not \in \Pi$ and denote by $j $ its type. 
By Claim~\ref{cl:I_Pispherical} there is some $i \in I_0(c) \cap I_\Pi(c)$. Let $\sigma_i$ be the $i$-panel of $c$. Then $d = \proj_{\sigma_i}(x)$ is different from $c$ and moreover $\sigma_i \in \Pi$. By Claim~\ref{cl:I_Pi} and Corollary~\ref{cor:balls},  this implies that $B(x, n_0)$ is entirely contained in $X_i(d)$. It follows that $ m_{i, j}=2$, since otherwise we would have $X_i(d) \subset X_j(c)$ by Lemma~\ref{lem:inclusion} and hence $\dist(x, \Res_{j \cup j^\perp}(c)) \geq n_0$. 
 This would contradict Claim~\ref{cl:I_Pi} since $\sigma \not \in \Pi$.   

Since $m_{i, j}=2$, it follows that the $i$-panel of $c'$, say $\sigma'_i$, is parallel to $\sigma_i$ since they are contained and opposite in the  $\{i, j\}$-residue of $c$. Therefore, any element of $G(n_0) \leq U$ stabilises $\sigma_i$ and acts non-trivially on it if and only if it stabilises $\sigma'_i$ and acts non-trivially on it. Hence   $f(\sigma_i) = f(\sigma'_i)$ and therefore $f(c)=f(c')$.

\begin{claim}
We have $A \cap G(n_1+1) =1$ or $B \cap G(n_1+1)=1$.
\end{claim}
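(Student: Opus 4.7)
The plan is to combine the one-endedness of $X$ (hypothesis (\ref{it:Xone-end}) under which we are working) with the constancy of $f$ on adjacent far-away chambers established in the previous claim. The proof of (i) $\Rightarrow$ (\ref{it:Xone-end}) above shows that $\Ch(X) \setminus B(x, n_1)$ is gallery-connected, and by the previous claim $f(c) = f(c')$ for any two adjacent chambers $c, c'$ at distance~$> n_1$ from $x$. Since Claim~\ref{cl:I_Pispherical} and Claim~\ref{cl:constant} ensure that $f(c)$ is well defined on the whole of $\{c \in \Ch(X) \mid \dist(c, x) > n_1\}$, it follows that $f$ takes a single value on this connected region. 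Up to swapping the roles of $A$ and $B$, I will assume that value is $A$ and aim to prove $B \cap G(n_1+1) = 1$.

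Suppose for a contradiction that $B \cap G(n_1+1)$ contains a non-trivial element $b$. Then $b$ fixes $B(x, n_1+1)$ pointwise, so any chamber moved by $b$ is at distance at least $n_1+2$ from $x$. Pick such a chamber $c$ with $\dist(c, x)$ minimal, and let $c'$ be a neighbour of $c$ on a minimal gallery from $c$ to $x$; then $\dist(c', x) = \dist(c, x)-1 \geq n_1+1$, and the minimality of $\dist(c, x)$ forces $b(c') = c'$. Let $\sigma$ be the panel shared by $c$ and $c'$. Since $n_1 > n_0$ we have $b \in \Stab_B(\sigma) \cap G(n_0)$, and $b(c) \neq c$ shows that $b \notin \Fix_G(\sigma)$. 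Therefore $\sigma \in \Pi$ and $\Stab_B(\sigma) \not\leq \Fix_G(\sigma)$.

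By the uniqueness part of Claim~\ref{cl:f}, this forces $f(\sigma) = B$. On the other hand, $c$ lies at distance $> n_1$ from $x$ and $\sigma$ is a panel of $c$ belonging to $\Pi$, so by the very definition of the extension of $f$ to chambers at distance $> n_1$ we have $f(c) = f(\sigma) = B$, contradicting the fact that $f$ is constantly equal to $A$ on the gallery-connected set $\Ch(X) \setminus B(x, n_1)$. Hence $B \cap G(n_1+1) = 1$. The same argument, applied with $A$ and $B$ interchanged in the case $f \equiv B$, shows that instead $A \cap G(n_1+1) = 1$, which completes the claim.

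The only subtle point is to arrange that the panel $\sigma$ witnessing $f(\sigma) = B$ is itself attached to a chamber lying in the region where $f$ has already been pinned down to a single value: this is exactly what the minimal-distance choice of $c$ accomplishes, since $b \in G(n_1+1)$ forces $\dist(c, x) \geq n_1+2$ and the fixed neighbour $c'$ automatically sits at distance~$\geq n_1 + 1 > n_1$. No further ingredients beyond one-endedness and the previous claims are needed.
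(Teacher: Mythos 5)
Your proof is correct and follows essentially the same route as the paper: use one-endedness together with the previous claim to conclude that $f$ is constant (say $\equiv A$) off $B(x,n_1)$, then show a non-trivial element of $B \cap G(n_1+1)$ would produce a panel $\sigma \in \Pi$ attached to a far-away chamber with $f(\sigma)=B$, a contradiction. The only cosmetic difference is that you run a minimal-counterexample argument where the paper phrases the same mechanism as an induction showing $B \cap G(m) \leq G(m+1)$ for all $m > n_1$.
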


By (\ref{it:Xone-end}) any two chambers at distance~$ > n_1$ from $x$ can be joined by a gallery which does not meet the ball $B(x, n_1)$. By the preceding claim, this implies that the map $f$ is constant on $\Ch(X) -  B(x, n_1)$. Upon exchanging $A$ and $B$ we may assume that this constant value is $A$. It follows that for all panels $\sigma \in \Pi$ at distance~$> n_1$ from $x$, we have $\Stab_B(\sigma) \leq \Fix_B(\sigma)$. An immediate induction now shows that for all $m > n_1$, we have $B \cap G(m) \leq G(m+1)$. Therefore $B \cap G(n_1+1)$ is trivial. 
\end{proof}

\providecommand{\bysame}{\leavevmode\hbox to3em{\hrulefill}\thinspace}
\providecommand{\MR}{\relax\ifhmode\unskip\space\fi MR }
\providecommand{\MRhref}[2]{%
  \href{http://www.ams.org/mathscinet-getitem?mr=#1}{#2}
}
\providecommand{\href}[2]{#2}

\end{document}